\documentclass[11pt]{article}




\usepackage{amsmath,amsthm,amsfonts,amssymb,latexsym,amscd,enumerate,xypic}
\usepackage{extarrows, comment}

\setcounter{tocdepth}{2}


\theoremstyle{plain}
    \newtheorem{theorem}{Theorem}[section]
    \newtheorem{lemma}[theorem]{Lemma}
    \newtheorem{corollary}[theorem]{Corollary}
    \newtheorem{proposition}[theorem]{Proposition}
    \newtheorem{conjecture}[theorem]{Conjecture}

 \theoremstyle{definition}
    \newtheorem{definition}[theorem]{Definition}
    
    \newtheorem{example}[theorem]{Example}

    \newtheorem{remark}[theorem]{Remark}

\theoremstyle{remark}

\numberwithin{equation}{section}

\DeclareMathOperator{\Ad}{Ad}
\DeclareMathOperator{\ad}{ad}

 \DeclareMathOperator{\temp}{temp}

\DeclareMathOperator{\rank}{rank}

\DeclareMathOperator{\indx}{index}
\DeclareMathOperator{\im}{im}

\DeclareMathOperator{\Spin}{Spin}

\DeclareMathOperator{\SO}{SO}

\DeclareMathOperator{\SL}{SL}

\DeclareMathOperator{\SU}{SU}

 \DeclareMathOperator{\Ind}{Ind}

  \DeclareMathOperator{\Span}{span}

\begin{document}

\newcommand{\Spinc}{\Spin^c}
\newcommand{\Todo}{\textbf{To do}}

    \newcommand{\R}{\mathbb{R}}
    \newcommand{\C}{\mathbb{C}} 
    \newcommand{\N}{\mathbb{N}}
    \newcommand{\Z}{\mathbb{Z}} 
    \newcommand{\Q}{\mathbb{Q}}
    \newcommand{\bK}{\mathbb{K}} 
     \newcommand{\HH}{\mathbb{H}}

\newcommand{\g}{\mathfrak{g}}
\newcommand{\ka}{\mathfrak{a}}
\newcommand{\km}{\mathfrak{m}}
\newcommand{\kn}{\mathfrak{n}}
\newcommand{\kg}{\mathfrak{g}} 
\newcommand{\kt}{\mathfrak{t}}
\newcommand{\kA}{\mathfrak{A}}
\newcommand{\XX}{\mathfrak{X}}
\newcommand{\kh}{\mathfrak{h}} 
\newcommand{\kp}{\mathfrak{p}}
\newcommand{\p}{\mathfrak{p}}
\newcommand{\kk}{\mathfrak{k}}
\newcommand{\ks}{\mathfrak{s}}
\newcommand{\ku}{\mathfrak{u}}
\newcommand{\su}{\mathfrak{su}}

\newcommand{\cE}{\mathcal{E}}
\newcommand{\cA}{\mathcal{A}}
\newcommand{\calL}{\mathcal{L}}
\newcommand{\calH}{\mathcal{H}}
\newcommand{\cO}{\mathcal{O}}
\newcommand{\cB}{\mathcal{B}}
\newcommand{\cK}{\mathcal{K}}
\newcommand{\cP}{\mathcal{P}}
\newcommand{\calD}{\mathcal{D}}
\newcommand{\cF}{\mathcal{F}}
\newcommand{\calR}{\mathcal{R}}
\newcommand{\cX}{\mathcal{X}}
\newcommand{\calM}{\mathcal{M}}
\newcommand{\calS}{\mathcal{S}}
\newcommand{\cU}{\mathcal{U}}

\newcommand{\Sj}{ \sum_{j = 1}^{\dim G}}
\newcommand{\Sk}{ \sum_{k = 1}^{\dim M}}
\newcommand{\ii}{\sqrt{-1}}

\newcommand{\Bigwedge}{\textstyle{\bigwedge}}

\newcommand{\ddt}{\left. \frac{d}{dt}\right|_{t=0}}

\newcommand{\PM}{P}
\newcommand{\DM}{D}
\newcommand{\LM}{L}
\newcommand{\vM}{v}

\newcommand{\Wedge}{\lambda}

\newcommand{\specialin}{\hspace{-1mm} \in \hspace{1mm} }

\newcommand{\bspl}{\[ \begin{split}}
\newcommand{\espl}{\end{split} \]}

\newcommand{\Utilde}{\widetilde{U}}
\newcommand{\Xtilde}{\widetilde{X}}
\newcommand{\Dtilde}{\widetilde{D}}
\newcommand{\Etilde}{\widetilde{S}}
\newcommand{\wt}{\widetilde}
\newcommand{\pd}{\overline{\partial}}

\newcommand{\Rhat}{\widehat{R}}

\newcommand{\mattwo}[4]{
\left( \begin{array}{cc}
#1 & #2 \\ #3 & #4
\end{array}
\right)
}

\newcommand{\beq}[1]{\begin{equation} \label{#1}}
\newcommand{\eeq}{\end{equation}}

\title{A geometric formula for multiplicities of $K$-types of tempered representations}

\author{Peter Hochs, Yanli Song and Shilin Yu}
\date{\today}

\maketitle

\begin{abstract}
Let $G$ be a connected, linear, real reductive Lie group with compact centre. Let $K<G$ be compact. Under a condition on $K$, which holds in particular if $K$ is maximal compact, 
we give a geometric expression for the multiplicities of the $K$-types of  any tempered representation (in fact, any standard representation) $\pi$ of $G$. This expression is in the spirit of Kirillov's orbit method and the quantisation commutes with reduction principle. It is based on the geometric realisation of $\pi|_K$ obtained in an earlier paper. This expression was obtained for the discrete series by Paradan, and for tempered representations with regular parameters by Duflo and Vergne. We obtain consequences for the support of the multiplicity function, and a criterion for multiplicity-free restrictions that applies to general admissible representations. As examples, we show that admissible representations of $\SU(p,1)$, $\SO_0(p,1)$ and $\SO_0(2,2)$ restrict multiplicity-freely to maximal compact subgroups.
\end{abstract}

\tableofcontents


\section{Introduction}

\subsection{Background and motivation}

Let $G$ be a connected, linear, real reductive Lie group with compact centre. Let $K<G$ be a maximal compact subgroup. A tempered representation  of $G$ is a unitary irreducible representation whose $K$-finite matrix coefficients are in $L^{2+ \varepsilon}(G)$ for all $\varepsilon > 0$. 
The set $\hat G_{\temp}$ of these representations features in the Plancherel decomposition
\[
L^2(G) = \int^{\oplus}_{\hat G_{\temp}} \pi \otimes \pi^*\, d\mu(\pi)
\]
as a representation of $G\times G$, where $\mu$ is the Plancherel measure. Tempered representations are also important because they are used in the Langlands classification \cite{Langlands89} of admissible irreducible representations.

The restriction $\pi|_K$ of  a tempered representation $\pi$ to $K$ is determined by the multiplicities of all irreducible representations of $K$ in $\pi|_K$, i.e.\ the multiplicities of the $K$-types of $\pi$.
This restriction
contains a good deal of information about $\pi$. 
For example, if $\pi$ has real infinitesimal character, then Vogan showed that it is determined by its lowest $K$-type (see Theorem 8.1 in \cite{Vogan00}). 

If $\pi$ belongs to the discrete series, then Blattner's formula (proved by Hecht--Schmid \cite{Hecht75} and later also in \cite{DHV}) is an explicit combinatorial expression for the multiplicities of the $K$-types of $\pi$. For general tempered representations, there exist algorithms to compute these multiplicities. See for example the ATLAS software package\footnote{See \texttt{http://www.liegroups.org/software/}.} and its documentation \cite{ATLASdoc}. This involves representations of disconnected subgroups of $G$, which cannot be classified via Lie algebra methods. That is one of the reasons why it is a challenge to deduce general properties of multiplicities of $K$-types of tempered representations from such algorithms. Another reason is the cancellation of terms, that already occurs in Blattner's formula. That can make it hard, for example, to determine which multiplicities are zero.

Paradan \cite{Paradan03} gave a geometric expression for the multiplicities of the $K$-types of discrete series representations $\pi$. This was based on a version of the \emph{quantisation commutes with reduction} principle for a certain class of noncompact $\Spinc$-manifolds, and a geometric realisation of $\pi|_K$ based in turn on Blattner's formula and index theory of transversally elliptic operators. The main result in this paper, Theorem \ref{thm mult form}, is a generalisation of Paradan's result to arbitrary tempered representations. (In fact, it applies more generally to standard representations.) This generalisation is now possible, because of a {quantisation commutes with reduction} result for general noncompact $\Spinc$-manifolds proved recently by the first two authors of this paper \cite{HS16}. Theorem \ref{thm mult form} can in fact be generalised to more general compact subgroups $K<G$; see Corollary \ref{cor mult form K'}.
For tempered representations with regular parameters, the multiplicity formula was proved by Duflo and Vergne \cite{Duflo11}, via very different methods.
Our result has applications to multiplicity-free restrictions of admissible representations.

\subsection{The main result}

In Theorem \ref{thm mult form}, we use a homogeneous space of the form $G/H$, for a Cartan subgroup $H<G$ (depending on $\pi$). This can be identified with a coadjoint orbit $\Ad^*(G)\nu \subset \kg^*$ through a regular element $\nu$ (depending on $\pi$) of the dual of the Lie algebra $\kh$ of $H$. (The Lie algebra of a Lie group is denoted by the corresponding lower case Gothic letter.) First, assume that $\pi$ is induced from a discrete series representation of a factor $M$ in a cuspidal parabolic subgroup $MAN <G$. Then $G/H \cong \Ad^*(G)\nu$. Consider the map
\[
\Phi\colon G/H \xrightarrow{\cong} \Ad^*(G)\nu \to \kk^*.
\]
This is a \emph{moment map} in the sense of symplectic geometry, although we will need to work with the more general $\Spinc$-geometry. 

Let $\delta$ be an irreducible representation of $K$, and let $\eta$ be its highest weight for a maximal torus $T<K$ and a fixed positive root system for $(\kk, \kt)$. Let $\rho^K$ be half the sum of these positive roots. The \emph{reduced space} $(G/H)_{(\eta + \rho^K)/i}$ is
\[
(G/H)_{(\eta + \rho^K)/i} := \Phi^{-1}((\eta + \rho^K)/i)/T.
\]
This is a compact space, and
if $(\eta + \rho^K)/i$ is a regular value of $\Phi$ then it is an orbifold. In that case, it has a $\Spinc$-structure, induced by a given $K$-equivariant $\Spinc$-structure on $G/H$ (depending on $\pi$). The index of the corresponding $\Spinc$-Dirac operator is denoted by
\[
\indx((G/H)_{(\eta + \rho^K)/i}) \quad \in \Z.
\]
This can be computed via Kawasaki's index theorem,  formula (7) in \cite{Kawasaki81}. If $(\eta + \rho^K)/i$ is a singular value of $\Phi$, then Paradan and Vergne \cite{Paradan14} showed how to still define this index in a meaningful way, essentially by replacing $(\eta + \rho^K)/i$ by a nearby regular value; see Subsection \ref{sec ind red}. Our main result, Theorem \ref{thm mult form} is the following.
\begin{theorem}\label{thm main intro}
We have
\[
[\pi|_K:\delta] = \pm \indx((G/H)_{(\eta + \rho^K)/i}). 
\]
\end{theorem}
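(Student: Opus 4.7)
The plan is to combine two main ingredients: the geometric realisation of $\pi|_K$ obtained in the authors' earlier paper, and the $\Spinc$ quantisation-commutes-with-reduction theorem for noncompact manifolds proved in \cite{HS16}. Together these reduce Theorem \ref{thm main intro} to the general principle that multiplicities of $K$-types in an equivariant index are indices of reduced spaces.

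The first step is to realise $\pi|_K$ as the $K$-equivariant index of the $\Spinc$-Dirac operator $D^{G/H}$ on $G/H$ associated to the $K$-equivariant $\Spinc$-structure determined by the inducing data for $\pi$ (discrete series on $M$, character on $A$, parabolic $MAN$). By the main theorem of the earlier paper, one has
\[
\indx_K(D^{G/H}) = \pi|_K
\]
as (virtual) $K$-representations, and the map $\Phi \colon G/H \to \kk^*$ described in the introduction is the moment map naturally attached to this $\Spinc$-structure. This step transports the representation-theoretic problem to an equivariant-index problem on the noncompact $K$-manifold $G/H$.

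The second step is to apply the main theorem of \cite{HS16} to the pair $(G/H, D^{G/H})$. For each irreducible $K$-representation $\delta$ with highest weight $\eta$, that theorem yields
\[
\bigl[\indx_K(D^{G/H}) : \delta\bigr] = \pm\, \indx\bigl((G/H)_{(\eta+\rho^K)/i}\bigr),
\]
where the shift by $\rho^K$ is the standard Kostant/$\Spinc$-shift relating highest weights to moment-map levels, and the sign is dictated by orientation conventions on $G/H$ and on the reduced space. When $(\eta + \rho^K)/i$ is a regular value of $\Phi$, the right-hand side is given by Kawasaki's index formula on the reduced orbifold; when it is singular, one uses the Paradan--Vergne extension recalled in Subsection \ref{sec ind red}. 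Substituting the identification from the first step yields Theorem \ref{thm main intro}.

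The main obstacle will be verifying that the hypotheses of \cite{HS16} apply to $G/H$ in the generality required for standard, rather than merely discrete series, representations (the case Paradan \cite{Paradan03} handled with the tools available at the time). Concretely, I would need to check the admissibility/tameness condition of \cite{HS16} for $\Phi$ on the support of the index — in particular controlling its behaviour at infinity along the $A$-directions of $G/H$ when the parabolic $MAN$ is proper — and to verify that the $\Spinc$-shift built into $D^{G/H}$ is exactly compensated by the $\rho^K$-shift appearing in the statement, so that the bookkeeping of half-sums of roots produces the correct level $(\eta + \rho^K)/i$ with the correct sign. The treatment of singular values via \cite{Paradan14} is then a separate but essentially standard ingredient, independent of the representation-theoretic content.
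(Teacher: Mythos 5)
Your skeleton matches the paper's: realise $\pi|_K$ geometrically on $G/H$ via \cite{HSY1} (Theorem \ref{thm realise}) and then apply the noncompact $\Spinc$-version of quantisation commutes with reduction from \cite{HS16} (Theorem \ref{thm QR=0}). But two points that you treat as routine verification are where the actual content of the proof lies, and one of them is a step that would fail as you have set it up. First, the left-hand input is not the plain $L^2$-index of the $\Spinc$-Dirac operator $D^{G/H}$ (which is not defined as an admissible virtual $K$-module on this noncompact space), but Braverman's index of a Dirac operator deformed by a taming map; and the identification of the orbit projection $gH \mapsto (\Ad^*(g)(\xi+\zeta))|_{\kk}$ as the moment map of a connection on the determinant line bundle of the given $\Spinc$-structure is not automatic. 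It requires computing that determinant line bundle and, crucially, the isomorphism $G\times_H\C_{\sigma}\cong G\times_H\C_{\sigma+\zeta}$ (Lemma \ref{lem line bdles}) that lets one smuggle in the $\ka$-component $\zeta$; without this $\zeta$-shift the moment map is not taming even in the regular case, and properness/tameness then come from \cite{Paradan99}. This is Proposition \ref{prop moment map}, and it is what makes Theorem \ref{thm QR=0} applicable at the level $(\eta+\rho^K)/i$ with the stated sign.

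Second, and more seriously, your plan to ``check the admissibility/tameness condition of \cite{HS16} for $\Phi$'' cannot succeed when $\pi$ is induced from a \emph{limit} of discrete series, i.e.\ when $\xi=(\lambda+\rho^{G,M})/i$ is singular: then $\xi+\zeta$ cannot be made regular and the moment map attached to the natural connection is in general neither proper nor taming, so Theorem \ref{thm QR=0} simply does not apply to it. (This singularity of the inducing parameter is a different issue from $(\eta+\rho^K)/i$ being a singular value of $\Phi$, which the Paradan--Vergne shift handles and which you do address.) The missing idea is to modify the connection itself: replace $\nabla$ by $\nabla+2i\tau(v^{\psi},\relbar)$ for a taming map $\psi$ built from a regular element $\tilde\xi+\zeta$, so that the new moment map $\Phi^{\tau}$ is proper, taming, and homotopic to $\psi$ as taming maps (Lemma \ref{lem taming moment}); homotopy invariance of Braverman's index (Theorem \ref{thm htp invar}) then shows the equivariant index is still $(-1)^{\dim(M/K_M)/2}\pi|_K$, and only after this replacement can Theorem \ref{thm QR=0} be invoked, with the reduced spaces in the statement taken with respect to $\Phi^{\tau}$. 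Without this deformation-of-the-connection step, your argument covers only the case where $\pi^M$ belongs to the discrete series of $M$.
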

See Section \ref{sec mult form} for precise definitions of the sign $\pm$, for the dependence on $\pi$ of $H$, $\nu$ and the $\Spinc$-structure on $G/H$, and for the definition of the index on the right hand side. In fact, Theorem \ref{thm main intro} applies more generally to standard representations $\pi$; see Remark \ref{rem standard reps}.

If $\pi$ is not induced from a discrete series representation of $M$, then its infinitesimal character is singular. In this case, the natural map $G/H \to \Ad^*(G)\nu$ is a fibre bundle. We then use a different map $\Phi$ to define reduced spaces (see Subsection \ref{sec tempered reps} for details.). This map depends on choices made, but the end result does not: Theorem \ref{thm main intro} still holds in this case.

Theorem \ref{thm main intro}, and the results that follow, are  in fact true for more general compact subgroups $K<G$: it is sufficient if the map $\Phi$ is proper. (This is true if $K$ is maximal compact; see (1.3) in \cite{Paradan99}.) See Corollary \ref{cor mult form K'}. Duflo and Vargas showed that in the case of a discrete series representation $\pi$,  properness of $\Phi$ with $K$ replaced by a possibly noncompact, closed, reductive subgroup $H<G$ is equivalent to the restriction of $\pi$ to $H$ being admissible (i.e.\ decomposing into irreducibles with finite multiplicities); see Proposition 4 in \cite{DV10}.

In the case where $\pi$ is induced from the discrete series, Duflo and Vergne \cite{Duflo11} proved a multiplicity formula for its $K$-types analogous to Theorem \ref{thm main intro}. The parametrisation part of the orbit method used by Duflo and Vergne to prove their result is the one described in Section III of \cite{Duflo82}. The geometric/representation theoretic input is Kirillov's character formula, proved by Rossmann \cite{Rossmann78}; see also \cite{Vergne79}. Our approach to proving Theorem \ref{thm main intro} is based on the geometric realisation of $\pi|_K$ in \cite{HSY1}, and allows us to prove it in general, 
 i.e.\ even for tempered representations induced from limits of the discrete series. Furthermore, our result has applications to multiplicity-free restrictions of general admissible representations.



Theorem \ref{thm main intro}
allows us to use the geometry of $G/H$, or of the coadjoint orbit $\Ad^*(G)\nu$, to draw conclusions about the general behaviour of the multiplicities of the $K$-types of $\pi$. One such conclusion is about the support of the multiplicity function of the $K$-types of $\pi$. 
\begin{corollary}\label{cor intro 0}
All $K$-types of $\pi$ have highest weights in the set
\[
i\Phi(G/H) \cap i\kt^* - \rho^K.
\]
\end{corollary}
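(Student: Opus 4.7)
The plan is to derive Corollary \ref{cor intro 0} as a direct consequence of Theorem \ref{thm main intro}: a nonzero multiplicity must force the corresponding reduced space to be nonempty, which in turn forces $(\eta+\rho^K)/i$ to lie in the image of $\Phi$.

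First, I would fix an irreducible $K$-representation $\delta$ with highest weight $\eta$ and suppose that $[\pi|_K:\delta] \neq 0$. By Theorem \ref{thm main intro}, the $\Spinc$-Dirac index $\indx((G/H)_{(\eta+\rho^K)/i})$ is then nonzero. Because $\eta$ and $\rho^K$ both lie in $i\kt^*$, the element $\xi := (\eta+\rho^K)/i$ already lies in $\kt^*$, so the corollary reduces to showing that $\xi \in \Phi(G/H)$. Equivalently, I must rule out the possibility that $\xi$ lies outside $\Phi(G/H)$.

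To do this, I would split into the regular and singular cases for $\xi$. If $\xi$ is a regular value of $\Phi$, then $(G/H)_{\xi} = \Phi^{-1}(\xi)/T$ is literally empty when $\xi \notin \Phi(G/H)$, and the $\Spinc$-Dirac index of an empty orbifold is zero, contradicting Theorem \ref{thm main intro}. If $\xi$ is a singular value, I would invoke the Paradan--Vergne definition recalled in Subsection \ref{sec ind red}, which computes the index via a small perturbation to nearby regular values. Here the key input is properness of $\Phi$, which holds under the running hypothesis on $K$ (and in particular when $K$ is maximal compact); properness implies that $\Phi(G/H)$ is closed in $\kk^*$. Consequently, if $\xi \notin \Phi(G/H)$ then a whole neighbourhood of $\xi$ is disjoint from $\Phi(G/H)$, every sufficiently small perturbation still has empty preimage, and the Paradan--Vergne index is zero, again contradicting Theorem \ref{thm main intro}.

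I expect the only nontrivial step to be the singular case, since it requires appealing to the Paradan--Vergne construction rather than to a literal reduced space. The obstacle is dispatched by the closedness of $\Phi(G/H)$ coming from properness of $\Phi$; once that is invoked, the argument is a straightforward nonvanishing principle and requires no further computation.
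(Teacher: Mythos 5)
Your argument is correct and is essentially the paper's own proof of Corollary \ref{cor zero} (of which Corollary \ref{cor intro 0} is the weaker part): apply the multiplicity formula and observe that if $(\eta+\rho^K)/i$ lies outside $\Phi(G/H)$ then the reduced space, and the nearby reduced spaces used in the Paradan--Vergne shifted index, are empty, so the index and hence the multiplicity vanish. Your explicit appeal to properness of $\Phi$ (closedness of the image) just spells out the step the paper states as ``so are reduced spaces at elements close enough to $(\eta+\rho^K)/i$''.
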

In fact, these highest weights even lie in the relative interior of this set, see Corollary \ref{cor zero}.

Applications of Theorem \ref{thm main intro} to multiplicity-free restrictions are described in Subsection \ref{sec intro mult free}.

\subsection{The orbit method and quantisation commutes with reduction}

Theorem \ref{thm main intro} is directly related to Kirillov's orbit method and Guillemin and Sternberg's quantisation commutes with reduction principle \cite{Guillemin82}. Indeed, if a representation $\pi$ of $G$ is associated to a coadjoint orbit $\cO_{\pi} \subset \kg^*$, and an irreducible representation $\delta$ of a closed subgroup $H<G$ is associated to a coadjoint orbit $\cO_{\delta} \subset \kh^*$, then according to this principle, one  expects that
\beq{eq QR orbit}
[\pi|_H:\delta] = Q\bigl( (\cO_{\pi} \cap p^{-1}(\cO_{\delta}) )/H \bigr),
\eeq
where $p\colon \kg^* \to \kh^*$ is the restriction map and $Q$ denotes some notion of geometric quantisation. In fact, a result of this form by Heckman \cite{Heckman82} for compact Lie groups was inspiration for Guillemin and Sternberg to develop the idea that quantisation commutes with reduction. The equality \eqref{eq QR orbit} is also related to the role that the Corwin--Greenleaf multiplicity function plays in the study of multiplicity-free restrictions (see below). 

In the setting of Theorem \ref{thm main intro}, suppose that the infinitesimal character $\chi$ of $\pi$ is a regular element of $i\kh^*$. Then it was shown in \cite{HSY1} that 
\[
\pi|_K = Q_K(\cO_{\pi}),
\]
where $\cO_{\pi} = \Ad^*(G)(\chi + \rho^{G,M})$, for an element $\rho^{G,M} \in i\kh^*$ defined in terms of half sums of positive roots (see \eqref{eq def rhoGM}), and where $Q_K$ stands for a natural notion of $K$-equivariant geometric quantisation of noncompact $\Spinc$-manifolds \cite{HS16, Zhang14, Paradan03, Paradan11, Vergne06}. If $H=K$, and $\delta \in \hat K$ has highest weight $\eta$ (hence infinitesimal character $\eta+ \rho^K$), then $\cO_{\delta} = \Ad^*(K)(\eta+\rho^K)$ for a $\Spinc$-version of geometric quantisation \cite{Paradan15}. Then Theorem \ref{thm main intro} is precisely the equality \eqref{eq QR orbit}, where $Q$ is given by the index of $\Spinc$-Dirac operators.

We have mentioned $\Spinc$-quantisation several times so far. Paradan showed in \cite{Paradan03} that it is natural to use a $\Spinc$-version of geometric quantisation to obtain multiplicities of $K$-types of representations of $G$, rather than the symplectic version. Paradan and Vergne showed in \cite{Paradan14} that the quantisation commutes with reduction principle has a natural extension to the $\Spinc$-setting. This was generalised to a result for noncompact $\Spinc$-manifolds in \cite{HS16} (see Theorem \ref{thm QR=0}), which we will use to prove Theorem \ref{thm main intro}.

If the infinitesimal character $\chi$ is singular, then the link between Theorem \ref{thm main intro} and the orbit method is less direct. Rather than using nilpotent coadjoint orbits in that case, we use $G/H$ as a desingularisation, which allows us to still obtain an expression for multiplicities of $K$-types.

\subsection{Multiplicity-free restrictions} \label{sec intro mult free}

The problem of determining when the restriction of an irreducible representation $\pi$ of $G$ to a closed subgroup $H$ is multiplicity-free is the subject of active research by a large community of mathematicians. This restriction $\pi|_{H}$ is called multiplicity-free if the only $H$-equivariant endomorphisms of the representation space of $\pi$ are the scalar multiples of the identity operator. If $H$ is compact, as it is in our setting, then this precisely means that every irreducible representation has multiplicity $1$ in $\pi|_H$.
We just mention a few results on multiplicity-free restrictions here that are particularly relevant to our approach. See for example \cite{Kobayashi08} and the references given there for more information. 

Many results about multiplicity-freeness apply to noncompact simple groups $G$ of Hermitian type. This means that $G/K$ is a Hermitian symmetric space, or equivalently, $\kk$ has nonzero centre. For such groups, $\pi$ is said to be of scalar type if the $+i$ eigenspace of the action by a fixed central element of $\kk$ on the space of $K$-finite vectors is one-dimensional. In this setting,
Kobayashi proved that  $\pi$ has multiplicity-free restriction to any subgroup $H$ such that $(G,H)$ is a symmetric pair. See \cite{Kobayashi97}, and also Theorem A in \cite{Kobayashi08}. There are many other results on multiplicity-free restrictions; two of many possible references are \cite{Kobayashi04, SunZhu12}.

Theorem \ref{thm main intro} implies a geometric sufficient condition for the restriction of $\pi$ to $K$ to be multiplicity-free: this is the case when $(G/H)_{(\eta+ \rho^K)/i}$ is a point. In fact, one can then determine explicitly which multiplicities equal $1$ and which equal $0$.
%
\begin{corollary}\label{cor intro 1}
If $(\eta+ \rho^K)/i$ is a regular value of $\Phi$ and $(G/H)_{(\eta+ \rho^K)/i}$ is a point, then $[\pi|_K:\delta] \in \{0,1\}$. The condition in Corollary \ref{cor one 2} determines precisely when the value $0$ or $1$ is taken.

If $(\eta + \rho^K)/i$ is not necessarily a regular value of $\Phi$, but $(G/H)_{(\eta+ \rho^K+ \varepsilon)/i}$ is a point for all $\varepsilon$ close enough to $0$, then we still have
$
[\pi|_K:\delta] \in \{0,1\}.
$
\end{corollary}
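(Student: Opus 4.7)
The plan is to obtain the corollary as a direct consequence of Theorem~\ref{thm main intro} (i.e.\ Theorem~\ref{thm mult form}) by evaluating the $\Spinc$-Dirac index on a zero-dimensional reduced space.

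First, assume that $(\eta+\rho^K)/i$ is a regular value of $\Phi$ and that the reduced space $(G/H)_{(\eta+\rho^K)/i}$ is a single point. Then the induced $\Spinc$-structure lives on a compact $0$-dimensional orbifold; the spinor bundle is a single $\Z/2$-graded complex line and the $\Spinc$-Dirac operator is the zero map between its even and odd parts. Kawasaki's orbifold index formula \cite{Kawasaki81} therefore evaluates $\indx((G/H)_{(\eta+\rho^K)/i})$ to $\pm 1$ (depending on the parity of the spinor line), or possibly $0$ if orbifold isotropy acts non-trivially on it. Invoking Theorem~\ref{thm main intro},
\[
[\pi|_K : \delta] \;=\; \pm\, \indx((G/H)_{(\eta+\rho^K)/i}) \;\in\; \{-1,\,0,\,1\}.
\]
Non-negativity of multiplicities of irreducible representations of the compact group $K$ forces the value into $\{0,1\}$. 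The sharper statement of exactly which of the two values is attained follows by comparing the grading/orientation of the spinor line at the point against the sign prescribed in Theorem~\ref{thm main intro}; this comparison is the content of Corollary~\ref{cor one 2}.

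For the singular case, the index $\indx((G/H)_{(\eta+\rho^K)/i})$ is by definition the common value of $\indx((G/H)_{(\eta+\rho^K+\varepsilon)/i})$ for all sufficiently small regular perturbations $\varepsilon$, as recalled in Subsection~\ref{sec ind red} following \cite{Paradan14}. The hypothesis of the second part is precisely that each such perturbed reduced space is again a point, so the argument of the previous paragraph applies uniformly for all small $\varepsilon$ and we again obtain $[\pi|_K:\delta] \in \{0,1\}$.

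The only delicate point is the sign/orientation bookkeeping needed to recover Corollary~\ref{cor one 2}; the membership in $\{0,1\}$ itself follows immediately once Theorem~\ref{thm main intro} is combined with the elementary index computation on a point and the non-negativity of representation-theoretic multiplicities.
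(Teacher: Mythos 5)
Your argument for the membership $[\pi|_K:\delta]\in\{0,1\}$ is correct, and its skeleton is the same as the paper's: when the reduced space is a point, the orbifold index is, up to a sign, the dimension of the invariant part of a one-dimensional representation of the finite isotropy group, hence lies in $\{-1,0,1\}$; and in the shifted (singular-value) case the index is by definition the common index at nearby regular values, so the same bound applies. Where you differ is the final step. You pass from $\{-1,0,1\}$ to $\{0,1\}$ by invoking non-negativity of multiplicities, whereas the paper evaluates the index exactly via formula (5.34) of \cite{Paradan14}: with $\Gamma$ the stabiliser in $T/T_Y$ of a point $g_0H \in \Phi_Y^{-1}(\eta/i)$, the index equals $\dim V_{\eta}^{\Gamma}$, where the one-dimensional $\Gamma$-module $V_{\eta}$ is pinned down, by comparing the irreducible Clifford modules $(\calS_{Y,\eta})_{g_0H}$ and $\Bigwedge_{\C}T_{g_0H}Y$ (two-out-of-three), to be $\C_{\lambda-\eta-\rho^M}\otimes\chi_M$ or its dual. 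That identification of the isotropy representation---rather than the ``grading/orientation versus the sign in Theorem \ref{thm main intro}'' comparison you describe---is what yields the explicit $0$-versus-$1$ criterion of Corollary \ref{cor one 2}. Since the statement you are proving expressly delegates that criterion to Corollary \ref{cor one 2}, your non-negativity shortcut is legitimate for the membership claim (and is arguably more elementary), but it would not suffice to prove Corollary \ref{cor one 2} itself. Your handling of the singular case matches the paper's.
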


There is in fact a version of Corollary \ref{cor intro 1} for general admissible representations, see Corollary \ref{cor admissible}.
By applying this version, we find that the restriction to $K$ of every admissible representation is multiplicity-free in the examples where $G$ is one of the groups
\begin{itemize}
\item $\SU(p,1)$;
\item $\SO_0(p,1)$ or $\SO_0(2,2)$.
\end{itemize}
This is worked out in Section \ref{sec mult free}, see Corollary \ref{cor mult free ex}.
For $\SU(p,1)$ and $\SO_0(p,1)$, this was shown by Koornwinder \cite{Koornwinder82}. (In a related result for $\SU(p,1)$, van Dijk and Hille showed that the tensor product of a holomorphic discrete series representation and the corresponding anti-holomorphic discrete series representation decomposes multiplicity-freely; see Section 12 in \cite{vDH97}.)
For $G = \SL(2,\C)$ and $\SL(2,\R)$, all reduced spaces are points, so that all tempered representations have multiplciity free restrictions to $K$, as is well-known. We work out the case $G=\SL(2,\R)$ in detail in Subsection \ref{sec SL2}. Then we recover the well-known multiplicities of $K$-types of the tempered representations of $\SL(2,\R)$. For $\SL(2,\R)$, we show how Corollary \ref{cor intro 1} does not just imply multiplicity-freeness, but allows us to compute precisely which representations occur.

As mentioned above, for
many results on multiplicity-free restrictions, the group $G$ is assumed to be of Hermitian type. 
The groups $\SO_0(p,1)$ and $\SO_0(2,2)$ are not of Hermitian type, and can therefore not be treated via such results.

Links between multiplicity-free restrictions and the orbit method were investigated in \cite{Benson94, CG88, DV10, KN03, Nasrin10}. A key role here is played by the Corwin--Greenleaf multiplicity function $n$. For a closed subgroup $H<G$ and coadjoint orbits $\cO^H \in \kh^*/H$ and $\cO^G \in \kg^*/G$, this function takes the value
\[
n(\cO^G, \cO^H) = \# \bigl(\cO^G \cap p^{-1}(\cO^H)/H\bigr),
\]
where $p\colon \kg^*\to \kh^*$ is the restriction map. Corwin and Greenleaf  showed that this function gives multiplicities of restrictions of unitary irreducible representations if $G$ is nilpotent (see Theorem 4.8 in \cite{CG88}). Then Kirillov's orbit method classifies unitary irreducible representations as geometric quantisations of coadjoint orbits. In general, if $\pi$ is associated to $\cO^G$, then \eqref{eq QR orbit} suggests that
 the restriction $\pi|_H$  should be multiplicity-free if $n(\cO^G, \cO^H) \leq 1$ for all coadjoint orbits $\cO^H$ of $H$. Kobayashi conjectured \cite{KN03} that for groups $G$ of Hermitian type, this is the case if 
\beq{eq Nasrin intro}
\cO^{G} \cap ([\kk,\kk]+ \kp)^{\perp} \not=\emptyset.
\eeq
This conjecture was proved for $H=K$ by Nasrin \cite{Nasrin10}. 
Using Nasrin's result, we deduce the following fact from Corollary \ref{cor intro 1}.
\begin{corollary}
Suppose \eqref{eq Nasrin intro} holds. Then, under a regularity condition on  $\delta \in \hat K$, we have 
\[
[\pi|_K : \delta] \in \{0,1\},
\]
and there is a criterion for this multiplicity to equal zero or one.
\end{corollary}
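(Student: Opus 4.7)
The plan is to deduce this corollary by combining Corollary \ref{cor intro 1} with Nasrin's theorem \cite{Nasrin10}, via an identification of the reduced space appearing in Theorem \ref{thm main intro} with the Corwin--Greenleaf intersection set. First I would associate $\pi$ to the coadjoint orbit $\cO^G = \Ad^*(G)(\chi+\rho^{G,M})$ as in \cite{HSY1}, and associate the $K$-type $\delta$ (of highest weight $\eta$) to the coadjoint orbit $\cO^K_\delta = \Ad^*(K)((\eta+\rho^K)/i) \subset \kk^*$. The regularity condition on $\delta$ should be chosen precisely so that $(\eta+\rho^K)/i$ is a regular value of the moment map $\Phi\colon G/H \to \kk^*$ of Theorem \ref{thm main intro}, placing us in the cleanest case of Corollary \ref{cor intro 1}.

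The key geometric step is the identification
\[
(G/H)_{(\eta+\rho^K)/i} = \Phi^{-1}((\eta+\rho^K)/i)/T \cong (\cO^G \cap p^{-1}(\cO^K_\delta))/K,
\]
where $p\colon \kg^* \to \kk^*$ is the restriction map. This follows because $\Phi$ factors as the diffeomorphism $G/H \cong \Ad^*(G)\nu \subset \kg^*$ followed by $p$, so $\Phi^{-1}(K\cdot (\eta+\rho^K)/i) = \cO^G \cap p^{-1}(\cO^K_\delta)$; quotienting by $K$ on the right equals quotienting the fiber over $(\eta+\rho^K)/i$ by its $K$-stabilizer, which is $T$ by regularity. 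Consequently, the number of connected components of the reduced space equals the Corwin--Greenleaf multiplicity $n(\cO^G, \cO^K_\delta)$.

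Now I apply Nasrin's theorem: for $G$ of Hermitian type, the hypothesis \eqref{eq Nasrin intro} implies $n(\cO^G, \cO^K_\delta) \leq 1$ for every $K$-coadjoint orbit $\cO^K_\delta$. Via the identification above, the reduced space $(G/H)_{(\eta+\rho^K)/i}$ is then either empty or a single (orbifold) point. Corollary \ref{cor intro 1} therefore yields $[\pi|_K:\delta] \in \{0,1\}$, and the criterion distinguishing the two values comes directly from Corollary \ref{cor one 2} as in the statement of Corollary \ref{cor intro 1}.

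The main obstacle I anticipate is aligning the $\Spinc$-shifts built into our framework (by $\rho^{G,M}$ on the $G$-side and by $\rho^K$ on the $K$-side) with the bare coadjoint orbits appearing in Nasrin's statement, and formulating the regularity condition on $\delta$ so that the argument is genuinely closed. In particular, when $(\eta+\rho^K)/i$ is a singular value of $\Phi$, one must invoke the perturbation version of Corollary \ref{cor intro 1} and verify that $n(\cO^G, \cO^K_{\delta+\varepsilon}) \leq 1$ persists for nearby shifts; this should follow from Nasrin's result applied to the perturbed $K$-orbits, since the hypothesis \eqref{eq Nasrin intro} is on $\cO^G$ alone and is unaffected by small changes on the $K$-side. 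Once this bookkeeping is done, the corollary reduces to an essentially formal combination of the two ingredients.
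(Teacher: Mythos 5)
Your argument is essentially the paper's own proof: identify the reduced spaces (in the regular case, where $\Phi$ is the projection of the coadjoint orbit onto $\kk^*$) with the Corwin--Greenleaf sets $(\cO^G\cap p^{-1}(\cO^K))/K$, invoke Nasrin's theorem to get $n\leq 1$ so that each reduced space is empty or a point, and conclude via Corollary \ref{cor one 2} (the precise form of Corollary \ref{cor intro 1}), which also supplies the zero/one criterion. The paper likewise imposes the regularity hypothesis on $\delta$ (that $\eta/i$ be a regular value of $\Phi_Y$) rather than arguing the singular case by perturbation, so your extra bookkeeping there is not needed.
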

See Corollary \ref{cor CG} for a precise statement. 

We conjecture the condition for multiplicity-free restrictions in 
Corollary \ref{cor intro 1} to be necessary, as well as sufficient.
\begin{conjecture}
Let $H<G$ be a $\theta$-stable Cartan subgroup. Suppose that every tempered representation induced from the cuspidal parabolic subgroup corresponding to $H$ restricts multiplicity-freely to $K$. Then all reduced spaces for all maps $\Phi\colon G/H \to \kk^*$ corresponding to those representations are points.
\end{conjecture}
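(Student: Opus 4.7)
I would prove the conjecture by contrapositive: assume that for some representation $\pi$ in the tempered family induced from the cuspidal parabolic associated to $H$, with moment map $\Phi\colon G/H\to\kk^*$, and for some dominant integral $\eta$, the reduced space $(G/H)_{(\eta+\rho^K)/i}$ is not a single point. The goal is to produce some $\pi'$ in the same family and some $K$-type $\delta'$ with $[\pi'|_K:\delta']\geq 2$, contradicting the hypothesis. By Theorem \ref{thm main intro}, multiplicities equal $\pm$ the indices of reduced spaces, so the task becomes the geometric statement that non-point reduced spaces produce indices of absolute value at least $2$ for some $(\pi',\delta')$.

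The first step is the piecewise polynomial behaviour of the reduced-space index. For $\pi$ fixed, the function $\mu\mapsto \indx((G/H)_\mu)$ on each chamber of regular values of $\Phi$ inside $i\kt^*$ should be a polynomial of degree equal to half the real dimension of the generic reduced space on that chamber; this is the $\Spinc$-analogue of the Meinrenken--Sjamaar piecewise polynomiality and should follow from the deformation techniques of Paradan--Vergne \cite{Paradan14} combined with the noncompact quantisation commutes with reduction theorem of \cite{HS16}. The multiplicity-freeness hypothesis then bounds this polynomial by $1$ in absolute value on the infinite lattice of dominant integral weights in the chamber, and a polynomial on $\R^{\dim T}$ bounded on such a Zariski-dense subset must be constant. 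Thus the polynomial has degree $0$ and the generic reduced space has real dimension $0$.

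The main obstacle is to rule out the remaining case: a reduced space that is a finite set of $\geq 2$ orbifold points whose $\Spinc$ contributions cancel to leave a total index in $\{-1,0,1\}$. To handle this, I would exploit the freedom to vary $\pi$ across the whole tempered family parametrised by $\nu\in i\kh^*$. As $\nu$ varies, the map $\Phi_\nu$ and hence the points of $\Phi_\nu^{-1}(\mu)/T$ deform continuously, while the local $\Spinc$ sign at each point is locally constant in $(\nu,\mu)$. By crossing suitable walls in the $\nu$-parameter space (corresponding to moving between different tempered inductions from the same parabolic) one should be able to flip some of these signs and align all contributions, yielding an index of magnitude $\geq 2$ for some $(\pi',\delta')$.

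A structurally cleaner alternative would be to use the Corwin--Greenleaf description recalled in Subsection \ref{sec intro mult free}: the connected components of $(G/H)_\mu$ are in bijection with the $K$-orbits in $\Ad^*(G)\nu\cap p^{-1}(\Ad^*(K)\mu)$, where $p\colon\kg^*\to\kk^*$ is the restriction, so multiplicity-freeness for every $\pi'$ in the family should force this intersection to be a single $K$-orbit for all admissible $(\nu,\mu)$. Combined with the dimension-zero conclusion of the second step this yields the desired single point, reducing the conjecture to a purely orbit-theoretic statement. Establishing either this orbit-counting statement or the sign-alignment argument above is the principal difficulty, as both require tracking global information about the entire family of moment maps.
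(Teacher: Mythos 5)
This statement is a \emph{conjecture} in the paper, not a theorem: the authors prove only the ``if'' direction (via Corollary \ref{cor one 2}) and offer heuristic evidence for the direction you are attempting, namely that on a smooth positive-dimensional reduced space the multiplicity is given by an Atiyah--Singer integral involving $c_1(L_{\det}^{\eta+\rho^K})$, which one \emph{expects} to vary with $\pi$ and $\delta$. So there is no proof in the paper to match your proposal against, and your proposal does not close the gap either; it is a strategy sketch whose two load-bearing steps are unproven.

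Concretely: (1) the piecewise polynomiality you invoke is not established in the noncompact $\Spinc$ setting, and even granting it, the correct statement is quasi-polynomiality (periodic polynomial behaviour), as the paper's own $\SL(2,\R)$ computations show --- multiplicities there alternate $0,1,0,1,\dots$, which is bounded but not constant, so your ``bounded polynomial on a lattice must be constant'' argument fails as stated (at best a degree-zero conclusion survives, giving only that generic reduced spaces are zero-dimensional, which is weaker than the conjecture). (2) The step you yourself flag as the principal difficulty --- excluding a reduced space consisting of several points whose contributions cancel or all equal $\leq 1$ --- is exactly the open content of the conjecture, and the mechanism you propose is doubtful: the restriction $\pi|_K$ and the moment map $\Phi$ do not depend on the continuous induction parameter $\nu\in i\ka^*$ (only on the discrete data $\lambda$, $R^+_M$, $\chi_M$ and the auxiliary $\zeta$), so ``crossing walls in the $\nu$-parameter'' changes nothing; varying $\lambda$ does change $\Phi$, but you give no argument that the local signs or the finite-group invariants in Corollary \ref{cor one 2} can be aligned to force a multiplicity $\geq 2$. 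Finally, the Corwin--Greenleaf reformulation in your last paragraph is a restatement of the conjecture in orbit-theoretic language, not a reduction of it. In short, the statement remains open, and your proposal, while a sensible plan of attack consistent with the paper's evidence, does not constitute a proof.
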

Evidence for this conjecture is given under 
Conjecture \ref{conj mult free}.

\subsection{Ingredients of the proof}

The proof of Theorem \ref{thm main intro} is based on three ingredients.
\begin{enumerate}
\item A realisation of $\pi|_K$ as a $K$-equivariant index of a deformed Dirac operator on $G/H$. This was done in  Theorem 3.11 in \cite{HSY1}. That result involves index theory of deformed Dirac operators developed by Braverman \cite{Braverman02}.
\item A general quantisation commutes with reduction result for noncompact $\Spinc$-manifolds. This is Theorem 3.10 in \cite{HS16}. For compact $\Spinc$-manifolds, this was proved by Paradan--Vergne \cite{Paradan15, Paradan14, Paradan14CRAS}. For noncompact symplectic manifolds, the analogous result was proved by Ma--Zhang \cite{Zhang14}, after a conjecture by Vergne \cite{Vergne06}. See also \cite{Paradan11}.
\item One needs to show that the second ingredient can be applied to the first, by using the freedom one has
 in the deformation of the Dirac operator on $G/H$ to choose the particular  deformation that yields the desired result.
 This requires some work, and occupies a large part of this paper.
\end{enumerate}

\subsection*{Acknowledgements}

The authors are grateful to Maxim Braverman, Paul-\'Emile Paradan and David Vogan for their hospitality and inspiring discussions at various stages. The authors thank Mich\`ele Vergne for useful advice, and input on Conjecture \ref{conj mult free}.

The first author was partially supported by the European Union, through Marie Curie fellowship PIOF-GA-2011-299300. He thanks Dartmouth College for funding a visit there in 2016.

\subsection*{Notation}

The Lie algebra of a Lie group is denoted by the corresponding lower case Gothic letter. We denote complexifications by superscripts $\C$. The unitary dual of a group $H$ will be denoted by $\hat H$. If $H$ is an abelian Lie group and $\xi \in \kh^*$ satisfies the appropriate integrality condition, then we write $\C_{\xi}$ for the one-dimensional representation of $H$ with weight $\xi$. 

{In Subsections \ref{sec ind red}, \ref{sec QR=0} and \ref{sec sing}, the letter $M$ denotes a manifold. In the rest of this paper, it denotes a subgroup of $G$.} 


\section{The multiplicity formula}\label{sec mult form}

The main result of this paper is a multiplicity formula for $K$-types of tempered representations, Theorem \ref{thm mult form}, and its extension, Corollary \ref{cor mult form K'}. This is a geometric formula in terms of indices on \emph{reduced spaces} for the action by a maximal compact subgroup on a homogeneous space of the group in question.

\subsection{Indices on reduced spaces} \label{sec ind red}

Let $M$ be a complete Riemannian manifold, on which a compact Lie group $K$ acts isometrically. Let $J$ be a $K$-invariant almost complex structure on $M$. We write $\Bigwedge_J TM$ for the complex exterior algebra bundle of $TM$, viewed as a complex bundle via $J$.
Let $L \to M$ be a Hermitian, $K$-equivariant line bundle. The vector bundle
\beq{eq spinor bundle}
\Bigwedge_J TM \otimes L \to M
\eeq
is the spinor bundle of the $\Spinc$-structure on $M$ defined by $J$ and $L$, see e.g.\ Proposition D.50 in \cite{Guillemin98} or page 395 of \cite{Lawson89}. In this paper, we will only work with $\Spinc$-structures induced by almost complex structures and line bundles as in this case.

The determinant line bundle associated to the $\Spinc$-structure with spinor bundle \eqref{eq spinor bundle} is
\[
L_{\det} = \Bigwedge^{\dim(M)/2}_J TM \otimes L^{\otimes 2} \to M.
\]
Let $\nabla$ be a $K$-invariant, Hermitian connection on $L_{\det}$. The corresponding \emph{moment map} is the map $\Phi\colon M \to \kk^*$ such that for all $X \in \kk$,
\beq{eq def moment}
2i\langle \Phi, X\rangle = \calL_X - \nabla_{X^M}.
\eeq
Here $\langle \Phi, X\rangle \in C^{\infty}(M)$ is the pairing of $\Phi$ and $X$, $\calL_X$ is the Lie derivative with respect to $X$ of smooth sections of $L_{\det}$, and $X^M$ is the vector field on $M$ induced by $X$; our sign convention is that for $m \in M$,
\[
X^M(m) = \ddt \exp(-tX)m.
\]   
{The origin of the term `moment map' is that, by Kostant's formula,  $\Phi$ is a moment map in the symplectic sense if the curvature of $\nabla$ is $-i$ times a symplectic form on $M$.}

If $\xi \in \kk^*$, then the \emph{reduced space} at $\xi$ is the space
\beq{eq def reduced sp}
M_{\xi} := \Phi^{-1}(\xi)/K_{\xi},
\eeq
where $K_{\xi}$ is the stabiliser of $\xi$ with respect to the coadjoint action. If $\xi$ is a regular value of $\Phi$, then $K_{\xi}$ acts on the smooth submanifold $\Phi^{-1}(\xi) \subset M$ with finite stabilisers. Then $M_{\xi}$ is an orbifold. In our setting, the map $\Phi$ will be proper, so that $M_{\xi}$ is compact. We will express multiplicities of $K$-types of tempered representations as indices of Dirac operators on reduced spaces. For reduced spaces at regular values of the moment map, these are indices in the orbifold sense. For reduced spaces at singular values, one applies a small shift to a nearby regular value, see Definition \ref{def quant red} below.

The indices on reduced spaces that we will use were constructed in Subsections 5.1 and 5.2 of \cite{Paradan14}, for general $\Spinc$-structures. We review this construction here, for $\Spinc$-structures induced by almost complex structures and line bundles as above. The construction is done in three steps. First, one realises a given reduced space as a reduced space for an action by a torus. For actions by tori, indices on reduced spaces at regular values of the moment map can be defined directly. For singular values, one applies a shift to a nearby regular value.

We suppose from now on that the action by $K$ on $M$ has abelian stabilisers. (This is true in our application of what follows.)

Let $T<K$ be a maximal torus. Fix an open Weyl chamber $C \subset \kt$, and let $\rho^K$ be half the sum of the corresponding positive roots. 
Let $\xi \in \kt^*$ be dominant with respect to $C$. Then $\xi + \rho^K/i \in C$. We will always  identify $\kk \cong \kk^*$ via the inner product equal to minus the Killing form. Let $Y\subset M$ be a connected component of $\Phi^{-1}(C)$. Consider the map
\[
\Phi_Y := \Phi|_{Y} - \rho^K/i \colon Y \to \kt^*.
\]
Set
\[
Y_{\xi} := \Phi_Y^{-1}(\xi)/T.
\]
Let $q\colon \Phi_Y^{-1}(\xi)\to Y_{\xi}$ be the quotient map. Let $\kt_Y \subset \kt$ be the generic (i.e.\ minimal) stabiliser of the infinitesimal action by $\kt$ on $Y$.  The image of $\Phi_Y$ is contained in an affine subspace $I(Y) \subset \kt^*$ parallel to the annihilator of $\kt_Y$.
\begin{lemma}[Paradan--Vergne]\label{lem Spinc red}
If $\xi$ is a regular value of $\Phi_Y\colon Y \to I(Y)$, then $Y_{\xi}$ is an orbifold, and for every integral element $\eta \in I(Y)$, 
there is an orbifold $\Spinc$-structure on $Y_{\xi}$, with spinor bundle $\calS^{\eta}_{Y, \xi} \to Y_{\xi}$ determined by
\[
\bigl(\Bigwedge_J TM \otimes L\bigr)|_{\Phi_Y^{-1}(\xi)} = q^*\calS^{\eta}_{Y, \xi} \otimes_{\C} \Bigwedge_{\C}\kk/\kt \otimes_{\C} \bigl( \Bigwedge_{\C} (\kt/\kt_Y\otimes_{\R} \C )\bigr) \otimes_{\C} \C_{\eta}.
\]
Here $\kk/\kt$ is viewed as a complex vector space isomorphic to the sum of the positive root spaces corresponding to $C$, and $\Bigwedge_{\C}$ denotes the exterior algebra of complex vector spaces.
\end{lemma}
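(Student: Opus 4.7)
\smallskip

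The plan is to construct the $\Spinc$-structure on $Y_\xi$ from a splitting of $TM$ along the level set $\Phi_Y^{-1}(\xi)$, and then verify the spinor bundle decomposition by tracking how the almost complex structure $J$, the line bundle $L$, and the moment map formula \eqref{eq def moment} restrict to each summand. First, I would check that $Y_\xi$ is an orbifold: because $\xi$ is a regular value of $\Phi_Y\colon Y\to I(Y)$, the level set $\Phi_Y^{-1}(\xi)$ is a smooth submanifold of $Y$, and because $\kt_Y$ is the generic stabiliser of the $\kt$-action on $Y$, the quotient torus $T/\exp(\kt_Y)$ acts on $\Phi_Y^{-1}(\xi)$ with finite stabilisers (using that the overall $K$-stabilisers are assumed abelian and that $\Phi(Y)$ lies in the open Weyl chamber, so $K_y \subset T$ for $y \in Y$). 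This gives the orbifold quotient $Y_\xi$.

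Next I would produce the decomposition
\[
TM|_{\Phi_Y^{-1}(\xi)} \;\cong\; q^*TY_\xi \;\oplus\; \bigl(\kt/\kt_Y \otimes_\R \C\bigr) \;\oplus\; \kk/\kt,
\]
summand by summand. The factor $\kk/\kt$ is the normal bundle of $Y$ in $M$: since $\Phi(Y)$ consists of regular elements of $\kk^*$ with stabiliser $T$, the $K$-sweep of $Y$ has normal directions along $Y$ naturally identified with $\kk/\kt$, and $J$ restricts to a compatible complex structure there (matching the identification of $\kk/\kt$ with the sum of positive root spaces). The summand $\kt/\kt_Y \otimes_\R \C$ arises in the standard moment-map normal form for torus actions: the real normal bundle of $\Phi_Y^{-1}(\xi)$ in $Y$ is trivialised by $d\Phi_Y$ as $\kt/\kt_Y$ (its image inside $I(Y)$), while the infinitesimal $T/\exp(\kt_Y)$-action on $\Phi_Y^{-1}(\xi)$ provides a second copy of $\kt/\kt_Y$ transverse to $T\Phi_Y^{-1}(\xi)/T$; together they form a complex subbundle with complex structure induced by the $T$-action and $J$. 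The remaining tangential directions descend under $q$ to $TY_\xi$, giving the orbifold tangent bundle.

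Taking complex exterior algebras of the decomposition and using the multiplicativity $\bigwedge_J(V\oplus V') = \bigwedge_J V \otimes \bigwedge_J V'$ yields
\[
\bigl(\Bigwedge_J TM\bigr)\Big|_{\Phi_Y^{-1}(\xi)} \cong q^*\Bigwedge_J TY_\xi \otimes \Bigwedge_\C\bigl(\kt/\kt_Y\otimes_\R\C\bigr) \otimes \Bigwedge_\C \kk/\kt.
\]
It remains to analyse $L|_{\Phi_Y^{-1}(\xi)}$. At any $m \in \Phi_Y^{-1}(\xi)$ the infinitesimal action of $\kt_Y$ on $M$ is trivial, so by \eqref{eq def moment} the induced $\exp(\kt_Y)$-action on the fibre $L_{\det,m}$ is by a well-defined character, and similarly for $L$ itself. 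Choosing an integral lift $\eta \in I(Y)$ of the $T$-weight of $L$ along $\Phi_Y^{-1}(\xi)$ lets one write $L|_{\Phi_Y^{-1}(\xi)} \cong q^*\widetilde L_{Y,\xi} \otimes \C_\eta$ for a line bundle $\widetilde L_{Y,\xi} \to Y_\xi$, from which one reads off the spinor bundle $\calS^\eta_{Y,\xi}$ as the factor in the statement of the lemma.

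The main obstacle is the second step: producing the complex structure on the normal bundle of $\Phi_Y^{-1}(\xi)$ in $Y$ so that the two real copies of $\kt/\kt_Y$ assemble into $\kt/\kt_Y\otimes_\R\C$ compatibly with $J$, and verifying that the remaining identifications are $T$-equivariant with the correct weights so that the character $\C_\eta$ appears exactly once. Once this is in place, the decomposition of $\bigwedge_J TM \otimes L$ and the identification of $\calS^\eta_{Y,\xi}$ follow by matching graded components, as in Subsections 5.1--5.2 of \cite{Paradan14}.
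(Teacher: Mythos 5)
You should first note that the paper does not prove this lemma at all: it is quoted verbatim from Paradan--Vergne and the ``proof'' is the citation ``This is Lemma 5.2 in \cite{Paradan14}'', so your sketch has to be measured against that argument (and against the analogous step this paper carries out itself in the proof of Corollary \ref{cor one 2}). Your skeleton — split $TM$ along $\Phi_Y^{-1}(\xi)$ into $\kk/\kt$, a complexified $\kt/\kt_Y$, and directions descending to $Y_\xi$, then peel off factors — is the right one, but the step you yourself call ``the main obstacle'' is exactly where the proposal, as written, fails: you invoke $\Bigwedge_J(V\oplus V')=\Bigwedge_J V\otimes\Bigwedge_J V'$, which presupposes that $J$ preserves the splitting $TM|_{\Phi_Y^{-1}(\xi)}\cong q^*TY_\xi\oplus(\kt/\kt_Y\otimes_{\R}\C)\oplus\kk/\kt$. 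There is no reason for that: $J$ need not preserve $TY$ along $Y$, nor the $T$-orbit directions, nor the $d\Phi_Y$-normal directions. The displayed identity in the lemma is an equality of $\Z/2$-graded Clifford module bundles, with the complex structures on $\kk/\kt$ (sum of positive root spaces) and on $\kt/\kt_Y\otimes_{\R}\C$ fixed by convention, not an identity of $J$-invariant exterior algebra decompositions. The actual construction produces $\calS^{\eta}_{Y,\xi}$ by \emph{dividing} Clifford modules (uniqueness of the irreducible graded module over a Clifford algebra, the two-out-of-three argument), exactly as this paper does at the corresponding point in Corollary \ref{cor one 2}, where it is stressed that the resulting complex structure ``may be a different complex structure from $J_{g_0H}$''. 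Without replacing your $J$-compatible-splitting step by such a division argument, the decomposition you write down is not justified.

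Two further gaps. First, the orbifold claim: genericity of $\kt_Y$ by itself does not exclude larger stabilisers at points of the level set. One needs the regular-value hypothesis together with the moment-map relation \eqref{eq def moment}: if $X^M(m)=0$ then $d\langle\Phi,X\rangle_m=0$, so surjectivity of $d(\Phi_Y)_m$ onto the direction space of $I(Y)$ (the annihilator of $\kt_Y$) forces the infinitesimal stabiliser at every $m\in\Phi_Y^{-1}(\xi)$ to be exactly $\kt_Y$; only then is the $T/T_Y$-action locally free and $Y_\xi$ an orbifold. Second, in the statement $\eta$ is an \emph{arbitrary} integral element of $I(Y)$, not an integral lift of ``the weight of $L$'' as in your last step; what must be verified is that for any such $\eta$ the residual action of the stabilisers (in particular of $T_Y$, which acts trivially on the base) on the candidate bundle $(\Bigwedge_J TM\otimes L)|_{\Phi_Y^{-1}(\xi)}$ divided by $\Bigwedge_{\C}\kk/\kt$, $\Bigwedge_{\C}(\kt/\kt_Y\otimes_{\R}\C)$ and $\C_{\eta}$ is trivial, so that the quotient exists as an orbifold bundle on $Y_\xi$. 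This uses that all integral elements of $I(Y)$ have the same restriction to $\kt_Y$, together with a computation of the $\kt_Y$-weight on the fibres via the moment map; your sketch gestures at this but does not carry it out, and it is the second place where the substance of Paradan--Vergne's proof lies.
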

This is Lemma 5.2 in \cite{Paradan14}.

Suppose that $\Phi$ is a proper map. Then $Y_{\xi}$ is compact.
In the setting of Lemma \ref{lem Spinc red}, we write
\[
\indx(\calS^{\eta}_{Y, \xi}) \quad \in \Z
\]
for the orbifold index \cite{Kawasaki81} of a $\Spinc$-Dirac operator on the bundle $\calS^{\eta}_{Y, \xi}$. This can be evaluated in terms of characteristic classes on $Y_{\xi}$ via Kawasaki's index theorem, see formula (7) in \cite{Kawasaki81}.

\begin{theorem}[Paradan--Vergne]\label{thm indx red}
The integer $\indx(\calS^{\xi}_{Y, \xi + \varepsilon})$ is independent of $\varepsilon \in I(Y)$ such that $\xi + \varepsilon$ is a regular value of $\Phi_Y\colon Y \to I(Y)$, for $\varepsilon$ small enough.
\end{theorem}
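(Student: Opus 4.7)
The strategy is a cobordism argument for $\Spinc$-Dirac indices on the family of reduced orbifolds $\{Y_{\xi+\varepsilon}\}$, combined with a local computation that exploits the special choice of twist $\eta = \xi$ appearing in Lemma \ref{lem Spinc red}.

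First I would localize: since $\Phi$ is proper, the preimages $\Phi_Y^{-1}(\xi + \varepsilon)$ for $\varepsilon$ in a small neighborhood of $0 \in I(Y)$ all lie in a common compact $T$-invariant subset of $Y$. If $\xi$ is itself a regular value of $\Phi_Y$, then $\Phi_Y$ is submersive in a neighborhood of $\Phi_Y^{-1}(\xi)$ and the family $\{Y_{\xi+\varepsilon}\}$ varies smoothly with $\varepsilon$ as a family of orbifold $\Spinc$-manifolds; Kawasaki's formula then presents the index as a continuous, integer-valued function of $\varepsilon$, hence locally constant. The substantive case is when $\xi$ is a singular value, where I would stratify a neighborhood of $\Phi_Y^{-1}(\xi)$ by $T$-isotropy type and apply the equivariant $\Spinc$ slice theorem, in the form used in Section 5 of \cite{Paradan14}, to put $\Phi_Y$ into a local normal form on each stratum: a $T$-equivariant vector bundle over a smooth $\Spinc$-submanifold on which $\Phi_Y$ is submersive, with $\Phi_Y$ itself given on the tubular neighborhood by the submersive part plus the linear moment map of a torus representation on the fibre.

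In this local model, the reduced index factors as a product of a continuously varying contribution from the base and an algebraic contribution coming from the fibre representation. The crucial point is that the twist by $\C_\xi$ in the spinor bundle of Lemma \ref{lem Spinc red} inserts the character $\xi$ exactly where it is needed to make the fibre contribution produce the same integer for every nearby regular value $\xi + \varepsilon$; this can be checked by a direct character computation on the torus representation on the normal fibre, which is what makes the special choice $\eta = \xi$ (rather than an arbitrary integral $\eta$) play an essential role.

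The principal obstacle is wall-crossing: two small regular values of $\Phi_Y$ may lie in different chambers separated by walls of critical values passing through $\xi$, so the orbifolds $Y_{\xi+\varepsilon_0}$ and $Y_{\xi+\varepsilon_1}$ are not connected by a smooth cobordism inside $Y$. The jump of $\indx(\calS^{\xi}_{Y, \xi+\varepsilon})$ across such a wall is described by an equivariant index on the corresponding critical stratum, and the key algebraic fact, which follows from the local character calculation above applied to the normal representation of the stratum, is that the twist by $\C_\xi$ causes this jump to vanish for all $\varepsilon$ sufficiently close to $0$. Patching the local contributions over the finitely many strata in a compact neighborhood of $\Phi_Y^{-1}(\xi)$ then yields the stated independence.
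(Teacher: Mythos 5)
The paper does not actually prove Theorem \ref{thm indx red}: it is quoted verbatim as Theorem 5.4 of \cite{Paradan14}, so there is no internal argument to compare yours against. Your outline does follow the general lines of the Paradan--Vergne proof of that theorem (localisation near $\Phi_Y^{-1}(\xi)$ using properness, a slice/normal-form description near the singular fibre, and a wall-crossing analysis whose jumps must be shown to vanish), and your treatment of the regular-value case is fine, since nearby regular reduced spaces are orbifold-diffeomorphic with homotopic $\Spinc$-data, so the index is locally constant there.

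However, as a proof the proposal has a genuine gap exactly where the content of the theorem lies. The assertion that the jump of $\indx(\calS^{\xi}_{Y,\xi+\varepsilon})$ across a wall through $\xi$ vanishes because of the twist by $\C_{\xi}$ is not established: you defer it to ``a direct character computation on the torus representation on the normal fibre'' without carrying it out and without identifying the inequality that makes it work. The wall-crossing contribution is an equivariant index supported on a stratum with nontrivial stabiliser, and one must check that, after twisting, the trivial isotypical component does not occur among the weights of the stabiliser action on the normal data when $\varepsilon$ is small --- an estimate comparing $\|\varepsilon\|$ with the nonzero weights on the normal bundle of the critical stratum and with the value of $\Phi_Y$ there. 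Some such estimate must enter, because the analogous statement with an arbitrary fixed integral twist $\C_{\eta}$ in place of $\C_{\xi}$ (reduction point varying across a wall) fails in general; so the step you label as routine is precisely the theorem, and the proposal as written does not prove it.
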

This result is Theorem 5.4 in \cite{Paradan14}. It allows us to define
\[
\indx(\calS^{\xi}_{Y, \xi}) := \indx(\calS^{\xi}_{Y, \xi + \varepsilon}),
\]
for $\varepsilon$ as in Theorem \ref{thm indx red}.

Finally, we have
\[
M_{\xi + \rho^K/i} = \coprod_{Y} Y_{\xi},
\]
where $Y$ runs over the connected components of $\Phi^{-1}(C)$. 
\begin{definition} \label{def quant red}
The \emph{index of the $\Spinc$-Dirac operator on the reduced space $M_{\xi + \rho^K/i}$} is the integer
\[
\indx(M_{\xi + \rho^K/i}) = \sum_{Y} \indx(\calS^{\xi}_{Y, \xi}),
\]
where $Y$ runs over the connected components of $\Phi^{-1}(C)$. If $M_{\xi + \rho^K/i} = \emptyset$, then we set $\indx(M_{\xi + \rho^K/i}) = 0$.
\end{definition}
Such an index on a reduced space may be viewed as the \emph{$\Spinc$-quantisation} of that space; see Definition 5.5 in \cite{Paradan14}.

\subsection{Tempered representations, almost complex structures and moment maps} \label{sec tempered reps}

Let $G$ be a connected, linear, real reductive Lie group with compact centre. Let $K<G$ be maximal compact, and let $\theta$ be a compatible Cartan involution. A tempered representation of $G$ is an irreducible unitary representation whose $K$-finite matrix coefficients are in $L^{2+\varepsilon}(G)$ for all $\varepsilon > 0$. These are the representations that occur in the Plancherel decomposition of $L^2(G)$.
Let $\pi$ be a tempered representation of $G$.

Tempered representations were classified by Knapp and Zuckerman. See \cite{KZ1, KZ2, KZ3} or Chapter XIV in \cite{KnappBook} for details, or Subsection 2.3 of \cite{HSY1} for a brief overview of the parts relevant to us here. In this classification one parametrises $\pi$ as follows. Let $P = MAN$ be the Langlands decomposition of a cuspidal parabolic subgroup $P<G$. Let $H<G$ be the $\theta$-stable Cartan subgroup with noncompact part $A$. Write $\kh = \kt_M \oplus \ka$, with $\kt_M \subset \km$. Then $\kt_M$ is a Cartan subalgebra of $\km$. Set $T_M := \exp(\kt_M)$. 
Recall that we use minus the Killing form, which we denote by $(\relbar, \relbar)$,   to identify $\kk^* \cong \kk$.
Let $\lambda \in i\kt_M^*$, and let $R^+_M$ be a system of positive roots for $(\km^{\C}, \kt_M^{\C})$ such that for all $\alpha \in R^+_M$, we have $(\alpha, \lambda) \geq 0$. Let $\rho^M$ be half the sum of the elements of $R^+_M$. Suppose that $\lambda - \rho^M$ is integral. Let $Z_M$ be the centre of $M$. Let $\chi_M$ be a one-dimensional representation of $Z_M$ such that
\[
\chi_M|_{T_M \cap Z_M} = \C_{\lambda - \rho^M}|_{T_M \cap Z_M}.
\]
Then we have the well-defined representation
$
\C_{\lambda - \rho^M} \boxtimes \chi_M
$
of $H_M = T_MZ_M$.
One has the discrete series or limit of discrete series representation $\pi^M_{\lambda, R^+_M, \chi_M}$ associated to these data; see page 397 of \cite{KZ1}. (For singular $\lambda$, $\pi^M_{\lambda, R^+_M, \chi_M}$ is a limit of discrete series representation if it is nonzero.)
Let $\nu \in i\ka^*$. For suitable $\lambda$, $R^+_M$ and $\chi_M$  as above, we have
\beq{eq pi ind}
\pi = \Ind_{MAN}^G(\pi^M_{\lambda, R^+_M, \chi_M} \otimes e^{\nu} \otimes 1_N).
\eeq
This is Corollary 8.8 in \cite{KZ1}.

We will use the $K$-invariant almost complex structure $J$ on $G/H$ defined in Subsection 3.4 of \cite{HSY1}. This was defined via the decomposition
\beq{eq decomp gh}
\kg/\kh \cong \km/\kt_M \oplus \kn^- \oplus \kn^+,
\eeq
where $\kn^+ = \kn$ and $\kn^- = \theta \kn^+$. On $\km/\kt_M$ we have the complex structure $J_{\km/\kt_M}$ such that, as complex vector spaces,
\[
\km/\kt_M = \bigoplus_{\alpha \in R^+_M} \km^{\C}_{\alpha}.
\]
Let $\Sigma$ be the set of nonzero weights of the adjoint action by $\ka$ on $\kg$. For $\beta \in \Sigma$, let $\kg_{\beta} \subset \kg$ be the corresponding weight space. Let $\Sigma^+ \subset \Sigma$ be the set of positive weights such that
\[
\kn = \bigoplus_{\beta \in \Sigma^+} \kg_{\beta}.
\]
Let $\zeta \in \ka$ be an element for which $\langle \beta, \zeta \rangle > 0$ for all $\beta \in \Sigma^+$. Then the map
\[
J_{\zeta}:= \theta |\ad(\zeta)|^{-1} \ad(\zeta)\colon \kn^- \oplus \kn^+ \to \kn^- \oplus \kn^+
\]
is an $H_M$-invariant complex structure (see Lemma 3.9 in \cite{HSY1}). Let $J_{\kg/\kh}$ be the complex structure on $\kg/\kh$ defined by $J_{\km/\kt_M}$ and $J_{\zeta}$ via \eqref{eq decomp gh}. Then $J$ is the $K$-invariant almost complex structure on $G/H$ such that for all $k \in K$, $X \in \ks_M$ and $Y \in \kn$, the following diagram commutes:
\[
\xymatrix{
T_{k\exp(X)\exp(Y)H}G/H \ar[r]^-{J} & T_{k\exp(X)\exp(Y)H}G/H  \\
T_{eH}G/H = \kg/\kh \ar[u]^-{T_{eH}k\exp(X)\exp(Y)} \ar[r]_-{J_{\kg/\kh}} & \kg/\kh  = T_{eH}G/H \ar[u]_-{T_{eH}k\exp(X)\exp(Y)}.
}
\]
(See Lemma 3.10 in \cite{HSY1}.)

Consider the line bundle
\[
L_{\lambda - \rho^M, \chi_M} := G\times_H \C_{\lambda - \rho^M} \boxtimes \chi_M \to G/H
\]
(where we extend $\lambda - \rho^M \in i\kt_M^*$ to $\kh$ by setting it equal to zero on $\ka$). The vector bundle
\beq{eq spinor GH}
\Bigwedge_J T(G/H) \otimes L_{\lambda - \rho^M, \chi_M} \to G/H
\eeq
is a spinor bundle of the form \eqref{eq spinor bundle}.

The positive systems $R^+_M$ and $\Sigma^+$ determine a set $R^+_G$ of positive roots $\alpha$ of $(\kg^{\C}, \kh^{\C})$ that satisfy
\beq{eq RG}
(\alpha|_{\ka} = 0 \text{ and } \alpha|_{\kt_M} \in R^+_M) \quad \text{or} \quad (\alpha|_{\ka} \not=0 \text{ and } \alpha|_{\ka} \in \Sigma^+).
\eeq
Let $\rho^G$ be half the sum of the elements of $R^+_G$. Set
\beq{eq def xi}
\xi := (\lambda + \rho^{G,M})/i \quad \in \kt_M^*, 
\eeq
where
\beq{eq def rhoGM}
\rho^{G,M} := \rho^G|_{\kt_M} - \rho^M \quad \in i\kt_M^*.
\eeq
\begin{proposition} \label{prop moment map}
There is a $K$-invariant, Hermitian connection on the determinant line bundle corresponding to \eqref{eq spinor GH} whose moment map $\Phi\colon G/H \to \kk^*$ is given by
\[
\Phi(gH) = (\Ad^*(g)(\xi + \zeta))|_{\kk}.
\]
\end{proposition}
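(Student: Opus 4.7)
The plan is to identify the determinant line bundle $L_{\det}$ as a $G$-equivariant homogeneous line bundle $G \times_H \C_{\mu_{\det}}$, construct a natural $G$-invariant connection on it, and verify the moment-map formula by computing at the identity coset $eH$ and extending by equivariance. For the identification step, since the spinor bundle is $G$-equivariant, so is $L_{\det}$. By the definition of $J$ via $J_{\km/\kt_M}$ on $\km/\kt_M$ and $J_\zeta$ on $\kn^-\oplus\kn^+$, combined with the characterisation \eqref{eq RG} of $R^+_G$, the $\kh$-weights of the complex vector space $(\kg/\kh, J)$ are precisely the roots in $R^+_G$. Hence $\Bigwedge^{\textrm{top}}_J(\kg/\kh)$ has $\kh$-character $2\rho^G$, and tensoring with $\C_{\lambda-\rho^M}^{\otimes 2}$ (extending $\lambda - \rho^M$ by zero on $\ka$) yields $\mu_{\det} = 2\rho^G + 2(\lambda - \rho^M)$ on $\kh$, with the additional contribution from $\chi_M^{\otimes 2}$ at the group level.

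Next, I would construct a $G$-invariant connection on $L_{\det}$ using the $\Ad(H)$-invariant splitting $\kg = \kh \oplus \mathfrak{q}$ with $\mathfrak{q} := \ks_M \oplus \kn^+ \oplus \kn^-$. The principal $H$-connection on $G \to G/H$ defined by projection onto $\kh$ along $\mathfrak{q}$ induces a $G$-invariant connection on $L_{\det}$, which, combined with a $K$-invariant Hermitian metric obtained by averaging over $K$, produces the desired $K$-invariant Hermitian connection. Applying the moment-map formula $2i\langle \Phi, X\rangle = \calL_X - \nabla_{X^M}$ at $eH$: for $X \in \kh$, $X^M(eH) = 0$, so the formula reduces to $\mu_{\det}(X)$ and yields $\Phi(eH)|_{\kt_M} = (\lambda + \rho^{G,M})/i = \xi$; and for $X \in \mathfrak{q}$, the $\mathfrak{q}$-directional derivative contributions to $\calL_X s$ and $\nabla_{X^M} s$ cancel, giving $\Phi(eH)|_\mathfrak{q} = 0$. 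By $G$-invariance of the connection, the $K$-moment map extends to a $G$-equivariant map $\Phi_G \colon G/H \to \kg^*$ with $\Phi_G(gH) = \Ad^*(g)\Phi_G(eH)$ and $\Phi = \Phi_G|_\kk$. Identifying $\zeta \in \ka \subset \kp$ with its image in $\kg^*$ via the Killing form (under which $\langle \zeta, X\rangle = 0$ for all $X \in \kk$ since $\kk \perp \kp$), one matches $\Phi_G(eH)$ with $\xi + \zeta$ and concludes $\Phi(gH) = \Ad^*(g)(\xi + \zeta)|_\kk$.

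The main obstacle is the careful tracking of sign conventions and identifications, especially the Killing-form identifications $\kh \cong \kh^*$ and $\ka \cong \ka^*$, and the compatibility with the paper's sign convention for the fundamental vector field $X^M$. The cancellation of $\mathfrak{q}$-contributions for $X \in \mathfrak{q}$, which is what makes $\Phi_G(eH)$ supported on $\kh$, depends delicately on the adaptation of the connection to the splitting $\kg = \kh \oplus \mathfrak{q}$. Moreover, the natural $G$-invariant connection produces a specific element $\zeta \in \ka$ determined by $\rho^G|_\ka$; to realise an arbitrary $\zeta$ in the positive chamber allowed by the proposition, one must modify the connection by a suitable $K$-invariant imaginary $1$-form chosen to shift the $\ka$-component of $\Phi_G(eH)$ while preserving both $K$-invariance and the Hermitian condition.
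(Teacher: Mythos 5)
There is a genuine gap, and it sits at the very first step. You assert that the spinor bundle \eqref{eq spinor GH} is $G$-equivariant and that the $\kh$-weights of $(\kg/\kh,J)$ are the roots in $R^+_G$, so that $L_{\det}\cong G\times_H\C_{2\rho^G+2(\lambda-\rho^M)}$ (up to $\chi_M^2$). But $J$ is only $K$-invariant: the complex structure $J_{\zeta}=\theta|\ad(\zeta)|^{-1}\ad(\zeta)$ on $\kn^-\oplus\kn^+$ is $H_M$-invariant, while $\Ad(a)$ for $a\in A$ scales $\kg_{\beta}$ and $\kg_{-\beta}$ by reciprocal factors and $J_\zeta$ interchanges them, so $\Ad(A)$ does not commute with $J_{\zeta}$. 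Hence neither the spinor bundle nor $L_{\det}$ carries a natural $G$-equivariant structure, the isotropy action of $\ka$ is not $J$-complex-linear, and the claimed $\ka$-weight $2\rho^G|_{\ka}$ of $L_{\det}$ has no meaning; the identification of $L_{\det}$ as an induced bundle is only a $K$-equivariant statement and requires a genuine argument. The paper supplies it via the partial linearisation $E=K\times_{H_M}(\ks_M\oplus\kn)$ and the $K$-equivariant homotopy between $J$ and the linearised structure $J^E$ (Lemmas \ref{lem gh complex}, \ref{lem Ldet}, \ref{lem det GH}), which gives $\kt_M$-weight $2(\rho^{G,M}+\lambda)$, the extra character $\chi_M^2\otimes\chi_{\kn^-\oplus\kn^+}$ of the disconnected part (you drop $\chi_{\kn^-\oplus\kn^+}$), and \emph{no} canonical $\ka$-weight at all.

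The second problem is that the step you defer to the last paragraph --- ``modify the connection by a suitable $K$-invariant imaginary $1$-form to shift the $\ka$-component'' --- is precisely where the proposition is won or lost, since the arbitrary regular $\zeta$ is what makes $\Phi$ taming later on, and you give no construction or verification. Moreover, with your weight the character of $H$ is non-unitary on $A$ (its $\ka$-part is real), so the natural invariant connection is not Hermitian for any $K$-invariant metric in an obvious way, and the quantity $\calL_X-\nabla_{X^{G/H}}$ would fail to be purely imaginary away from $eH$, so the computed ``moment map'' would not be real-valued. The paper resolves both issues at once with Lemma \ref{lem line bdles}: a $K$-equivariant (deliberately not $G$-equivariant) isomorphism $G\times_H\C_{\sigma+\zeta}\cong G\times_H\C_{\sigma}$ built from the decomposition $G\cong K\exp(\ks_M)NA$, which installs the purely imaginary weight $2i\zeta$ on $\ka$; then the character is unitary, Lemma \ref{lem Spinc conn} gives a $G$-invariant Hermitian connection with $\kg^*$-valued moment map $\Ad^*(g)\sigma/2i$, and restricting to $\kk$ yields $(\Ad^*(g)(\xi+\zeta))|_{\kk}$. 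Your outline can be repaired along exactly these lines, but as written both the identification of $L_{\det}$ and the $\zeta$-shift are unsupported.
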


\subsection{The main result} \label{sec main result}

Fix a set of positive roots of $(\kk^{\C}, \kt^{\C})$ compatible with $R^+_M$. Let $\rho^K$ be half the sum of these positive roots.
Let $\delta \in \hat K$, and let $\eta \in i\kt^*$ be its highest weight.
If $\xi \in \kt_M^*$ is regular, it has positive inner products with all roots in $R^+_M$. Then we can and will choose $\zeta$ such that  $\xi + \zeta$ is regular for the roots of $(\kg^{\C}, \kh^{\C})$. 
In this case, let 
\[
(G/H)_{(\eta+\rho^K)/i} = \Phi^{-1}((\eta+\rho^K)/i)/T
\]
be the reduced space at $(\eta+\rho^K)/i$, as in \eqref{eq def reduced sp}, for the moment map
$\Phi$ of Proposition \ref{prop moment map}. Let 
\[
\indx((G/H)_{(\eta+\rho^K)/i}) \quad \in \Z
\]
be the index of the $\Spinc$-Dirac operator on this space, as in Definition \ref{def quant red}. Recall that
\[
\pi = \Ind_{MAN}^G(\pi^M_{\lambda, R^+_M, \chi_M} \otimes e^{\nu} \otimes 1_N).
\]
We set $K_M := K \cap M$. 
\begin{theorem}[Multiplicity formula; 
regular case]\label{thm mult form reg}
Suppose that $\xi \in \kt_{M}^*$ is regular.
For all $\delta \in \hat K$, with highest weight $\eta$, the multiplicity of $\delta$ in  $\pi|_K$ is
\[
[\pi|_K: \delta] = (-1)^{\dim(M/K_M)/2} \indx((G/H)_{(\eta+\rho^K)/i}). 
\]
\end{theorem}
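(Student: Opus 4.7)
The plan is to combine the three ingredients listed in the introduction. First, I would invoke Theorem 3.11 of \cite{HSY1}, which identifies the $K$-equivariant index of a Braverman-deformed $\Spinc$-Dirac operator on $G/H$, built from the spinor bundle \eqref{eq spinor GH}, with $(-1)^{\dim(M/K_M)/2}\, \pi|_K$. Decomposing this index into $K$-isotypical components as
\[
\indx_K\bigl(D^{G/H}\bigr) = \sum_{\delta \in \hat K} m_\delta \cdot \delta,
\]
the theorem reduces to verifying the identity $m_\delta = \indx((G/H)_{(\eta+\rho^K)/i})$ for every $\delta \in \hat K$ with highest weight $\eta$.

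Next, I would apply Theorem \ref{thm QR=0} (Theorem 3.10 of \cite{HS16}) to this index, with the moment map $\Phi$ of Proposition \ref{prop moment map}. Because $K$ is maximal compact, $\Phi$ is proper by (1.3) of \cite{Paradan99}, so $\Phi^{-1}((\eta+\rho^K)/i)$ is compact and the reduced space $(G/H)_{(\eta+\rho^K)/i}$ is a compact orbifold at regular values (and is handled via Definition \ref{def quant red} at singular values). Crucially, the regularity assumption on $\xi$ allows me to pick $\zeta \in \ka$ so that $\xi+\zeta$ is regular for the roots of $(\kg^{\C},\kh^{\C})$; then $gH \mapsto \Ad^*(g)(\xi+\zeta)$ identifies $G/H$ with the coadjoint orbit through $\xi+\zeta$ and matches the almost complex structure $J$ with the canonical one on this orbit. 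Combined with the explicit description of the induced $\Spinc$-structure on the reduced space given by Lemma \ref{lem Spinc red}, Theorem \ref{thm QR=0} then yields precisely $m_\delta = \indx((G/H)_{(\eta+\rho^K)/i})$.

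The hard part is the third ingredient: checking that the Braverman deformation used in Theorem 3.11 of \cite{HSY1}, which is tailored to the fibration $G/H \to G/MA$ and to the geometric realisation of the (limit of) discrete series $\pi^M_{\lambda, R^+_M, \chi_M}$ along the $\km$-direction, produces the same $K$-equivariant index as the canonical taming vector field built from $\Phi$ that is used in Theorem \ref{thm QR=0}. I would bridge this gap by constructing an explicit $K$-equivariant homotopy between the two deformations and applying the stability of the index of transversally elliptic operators \cite{Braverman02} under such homotopies. The properness of $\Phi$ enters in controlling the zero sets of the deformations at infinity along the homotopy, to ensure no spurious contributions are picked up; verifying this uniformly, and tracking the orientation conventions on the $\km/\kt_M$-factor that produce the sign $(-1)^{\dim(M/K_M)/2}$ in Theorem 3.11 of \cite{HSY1}, is where the bulk of the technical work lies.
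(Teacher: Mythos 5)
Your overall route is the paper's own: combine the realisation theorem from \cite{HSY1} (Theorem \ref{thm realise} here) with the noncompact $\Spinc$-version of quantisation commutes with reduction (Theorem \ref{thm QR=0}), applied to the moment map of Proposition \ref{prop moment map}. Where you diverge is in the ``hard part'' you announce: in the regular case there are not two different deformations to reconcile, so no homotopy argument is needed. The taming map in the realisation theorem, as used in this paper, is already the map $\Phi(gH) = (\Ad^*(g)(\xi+\zeta))|_{\kk}$ for any $\xi$ with $(\alpha,i\xi)>0$ for $\alpha \in R^+_M$ and $\xi+\zeta$ regular; when $\xi = (\lambda+\rho^{G,M})/i$ is regular one simply takes this specific $\xi$ and a suitable $\zeta$, and Proposition \ref{prop moment map} states that this very map is the $\Spinc$-moment map of a $K$-invariant connection on the determinant line bundle. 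Hence Theorem \ref{thm QR=0} applies directly (properness and tamingness come from \cite{Paradan99}, and the stabilisers of the $K$-action on $G/H$ are abelian); the homotopy-invariance machinery (Theorem \ref{thm htp invar} together with Lemma \ref{lem taming moment}) is reserved for the singular case, Theorem \ref{thm mult form sing}, and plays no role here. The genuine technical content behind the regular case is Proposition \ref{prop moment map} itself (Lemmas \ref{lem det GH}, \ref{lem line bdles} and \ref{lem Spinc conn}), which you cite as given --- legitimately, since it is a separately stated result. One small inaccuracy: your claim that the identification $G/H \cong \Ad^*(G)(\xi+\zeta)$ ``matches $J$ with the canonical structure on the orbit'' is neither needed nor correct; the paper emphasises that $J$ is not compatible with the Kostant--Kirillov form and that the line bundle is not a prequantisation of that symplectic structure, which is precisely why the $\Spinc$-version of $[Q,R]=0$ is invoked rather than the symplectic one.
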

Note that for $\xi$ to be regular, it is sufficient that $\lambda$ is regular; i.e.\  
$\pi^M_{\lambda, R^+_M, \chi_M}$ belongs to the discrete series of $M$.

If $\xi \in \kt_M^*$ is singular, 
choose any $\tilde \xi \in \kt_M^*$ with positive inner products with the positive roots in $R^+_M$, and choose $\zeta$ such that $\tilde \xi +  \zeta$ is regular. Define the map $\psi\colon G/H \to \kk^*$ by 
\[
\psi(gH) = (\Ad^*(g)(\tilde \xi + \zeta))|_{\kk},
\]
for $g \in G$.
Let $v^{\psi}$ be the vector field on $G/H$ defined by
\[
v^{\psi}(gH) = \ddt \exp(-t\psi(gH))gH,
\]
for all $g \in G$. Next, we choose a nonnegative function $\tau \in C^{\infty}(G/H)^K$ that grows fast enough, as in  Lemma \ref{lem taming moment} below. That lemma implies that
the map $\Phi^{\tau}\colon G/H \to \kk$ given by
\beq{eq mu tau GH}
\langle \Phi^{\tau}, X\rangle = \langle \Phi, X\rangle + \tau \cdot (v^{\psi}, X^{G/H}),
\eeq
for $X \in \kk$, is a proper moment map. In this case, we set
\[
(G/H)_{(\eta+\rho^K)/i} = (\Phi^{\tau})^{-1}((\eta+\rho^K)/i)/T.
\]
Again, 
let 
\[
\indx((G/H)_{(\eta+\rho^K)/i}) \quad \in \Z
\]
be the index of the $\Spinc$-Dirac operator on this space. 
\begin{theorem}[Multiplicity formula; 
singular case]\label{thm mult form sing}
Suppose $\xi \in \kt_M^*$ is singular. 
For all $\delta \in \hat K$, with highest weight $\eta$, the multiplicity of $\delta$ in  $\pi|_K$ is
\[
[\pi|_K: \delta] = (-1)^{\dim(M/K_M)/2} \indx((G/H)_{(\eta+\rho^K)/i}). 
\]
\end{theorem}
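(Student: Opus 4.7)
The plan is to reduce the singular case to the setup in which the noncompact $\Spin^c$ quantisation commutes with reduction theorem of \cite{HS16} directly applies, using the auxiliary regular element $\tilde\xi$ to produce an honest deformation one can tame. I will divide the argument into four steps.

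First, I would import the geometric realisation of $\pi|_K$ from \cite{HSY1}. Even when $\xi$ is singular, Theorem 3.11 of \cite{HSY1} still expresses $\pi|_K$ as the $K$-equivariant index of a Braverman-type deformed Dirac operator on $G/H$ with the spinor bundle \eqref{eq spinor GH}, where the taming vector field is built from a \emph{regular} auxiliary parameter. This is precisely the role of $\tilde\xi$: since $\tilde\xi$ is chosen with strictly positive inner products with the roots in $R^+_M$ and $\zeta$ is chosen so that $\tilde\xi+\zeta$ is regular, the vector field $v^\psi$ associated to $\psi(gH) = (\Ad^*(g)(\tilde\xi+\zeta))|_\kk$ has the properties needed for the deformed index to be well-defined and to compute $\pi|_K$ (up to the sign $(-1)^{\dim(M/K_M)/2}$, which appears already in the regular case).

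Second, I would verify that the tamed moment map $\Phi^\tau$ in \eqref{eq mu tau GH} satisfies the hypotheses of Theorem \ref{thm QR=0}. The pairing with $\kk$ is
\[
\langle \Phi^\tau, X\rangle = \langle \Phi, X\rangle + \tau \cdot (v^\psi, X^{G/H}),
\]
so $\Phi^\tau$ is the moment map for the $\Spin^c$-structure on $G/H$ twisted by a potential built from $\tau$ and $\psi$. Lemma \ref{lem taming moment} (cited in the statement) guarantees that for $\tau$ growing fast enough $\Phi^\tau$ is proper, which is the central hypothesis for the noncompact $\Spin^c$-quantisation commutes with reduction result. At this stage I would invoke that theorem to write
\[
\indx_K\bigl(\slashed D_{G/H}^{\Phi^\tau}\bigr)
= \sum_{\delta \in \hat K} \indx\bigl((G/H)_{(\eta+\rho^K)/i}\bigr)\,[\delta],
\]
where the left-hand side is the deformed index that, by Step 1, equals $(-1)^{\dim(M/K_M)/2}\pi|_K$.

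Third, I would verify that the reduced spaces appearing on the right are exactly those defined in Subsection \ref{sec ind red}, i.e.\ $(\Phi^\tau)^{-1}((\eta+\rho^K)/i)/T$, with the induced $\Spin^c$-structure from Lemma \ref{lem Spinc red}. This is essentially a matching of conventions: the $\Spin^c$-structure on $G/H$ built from $J$ and $L_{\lambda-\rho^M,\chi_M}$ has determinant line bundle whose moment map is $\Phi^\tau$, so the reduction procedure in Definition \ref{def quant red} produces the index $\indx((G/H)_{(\eta+\rho^K)/i})$ used in the statement.

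Fourth, I would prove independence of the auxiliary data $\tilde\xi$, $\zeta$, $\tau$. Independence of $\tau$ follows from the homotopy invariance in Theorem \ref{thm QR=0}, since for any two admissible weights the straight-line interpolation keeps the moment map proper and no reduced space crosses the relevant level during the homotopy (equivalently, the $K$-multiplicity on the left is independent of $\tau$). Independence of $(\tilde\xi,\zeta)$ is a similar path-connectedness argument within the open cone of regular parameters, combined with the fact that a small perturbation of a regular parameter only moves the moment map within the chamber structure. The main obstacle, and the step I expect to take the most care, is this last one: one must control what happens when the path of auxiliary parameters crosses walls in $i\kh^*$, and show that the Paradan--Vergne definition (Theorem \ref{thm indx red}) of the index on reduced spaces at possibly singular values is precisely what ensures the answer does not jump. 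Once that is handled, Theorem \ref{thm mult form sing} follows by combining Steps 1--3 with the sign computation that was already established in the regular case.
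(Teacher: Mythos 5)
Your Steps 1--3 follow the paper's own route (Theorem \ref{thm realise} for the realisation of $\pi|_K$, Lemma \ref{lem taming moment} for the map $\Phi^{\tau}$, Theorem \ref{thm QR=0} for the passage to reduced spaces), but there is a genuine gap at the hinge of the argument. In Step 2 you declare that the left-hand side of the quantisation-commutes-with-reduction identity, i.e.\ the index of the Dirac operator deformed by $v^{\Phi^{\tau}}$, ``by Step 1 equals $(-1)^{\dim(M/K_M)/2}\pi|_K$''. But Step 1 (Theorem \ref{thm realise}) produces the index deformed by the map $\psi$ built from the regular auxiliary parameter $\tilde\xi+\zeta$, not by $\Phi^{\tau}$; these are different deformations, and a priori their indices need not agree. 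The paper closes this gap by using the \emph{full} conclusion of Lemma \ref{lem taming moment} --- that $\Phi^{\tau}$ is not only proper and taming but also homotopic to $\psi$ as taming maps --- together with Braverman's homotopy (cobordism) invariance, Theorem \ref{thm htp invar}, which gives $\indx_K(\Bigwedge_J T(G/H)\otimes L_{\lambda-\rho^M,\chi_M},\Phi^{\tau})=\indx_K(\Bigwedge_J T(G/H)\otimes L_{\lambda-\rho^M,\chi_M},\psi)$; connection-independence of the deformed index (Theorem \ref{thm index}) takes care of the change of connection from $\nabla$ to $\nabla^{\tau}$. You extract only properness from Lemma \ref{lem taming moment} and attribute ``homotopy invariance'' to Theorem \ref{thm QR=0}, which contains no such statement, so as written the chain does not close.

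Relatedly, your Step 4 is misdirected: once the identity is established for each admissible choice of $(\tilde\xi,\zeta,\tau)$ --- which is exactly what the homotopy argument above delivers --- independence of the right-hand side is automatic, because the left-hand side $[\pi|_K:\delta]$ manifestly does not depend on these choices. No wall-crossing analysis in $i\kh^*$ and no appeal to Theorem \ref{thm indx red} for varying auxiliary parameters is needed for the proof of Theorem \ref{thm mult form sing}; the Paradan--Vergne shift enters only in the definition of the index on a reduced space at a fixed, possibly singular value of $\Phi^{\tau}$.
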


Combining Theorems \ref{thm mult form reg} and \ref{thm mult form sing}, we obtain the main result of this paper.
\begin{theorem}[Multiplicity formula for $K$-types of tempered representations]\label{thm mult form}
For any tempered representation $\pi$ of $G$, and all $\delta \in \hat K$, with highest weight $\eta$, the multiplicity of $\delta$ in  $\pi|_K$ is
\[
[\pi|_K: \delta] = (-1)^{\dim(M/K_M)/2} \indx((G/H)_{(\eta+\rho^K)/i}). 
\]
In other words,
\[
\pi|_K = (-1)^{\dim(M/K_M)/2} \bigoplus_{\delta \in \hat K} \indx((G/H)_{(\eta+\rho^K)/i}) \delta.
\]
\end{theorem}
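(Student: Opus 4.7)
The plan is to deduce Theorem \ref{thm mult form} from its two special cases, Theorems \ref{thm mult form reg} and \ref{thm mult form sing}, corresponding to regular and singular $\xi$. For both cases the strategy relies on the two inputs noted in the introduction: the realization of $\pi|_K$ as a $K$-equivariant index of a deformed Dirac operator on $G/H$ from \cite{HSY1}, and the quantisation commutes with reduction theorem for noncompact $\Spinc$-manifolds from \cite{HS16} (cited as Theorem \ref{thm QR=0} in this paper). The first step is to record that by \cite{HSY1}, up to the sign $(-1)^{\dim(M/K_M)/2}$ arising from the complex orientation on $\km/\kt_M$ inside \eqref{eq spinor GH}, $\pi|_K$ is the $K$-equivariant index of a Braverman-type deformed Dirac operator on $G/H$ for the $\Spinc$-structure with spinor bundle \eqref{eq spinor GH}, whose determinant line bundle moment map is the $\Phi$ of Proposition \ref{prop moment map}.

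In the regular case we choose $\zeta$ so that $\xi+\zeta$ is regular in $\kh^*$, identifying $G/H$ with a regular coadjoint orbit; properness of $\Phi$ then follows as in (1.3) of \cite{Paradan99}. Applying Theorem \ref{thm QR=0} to the $\delta$-isotypical component of the equivariant index extracts the multiplicity as $\indx((G/H)_{(\eta+\rho^K)/i})$, and combining with the sign yields Theorem \ref{thm mult form reg}. In the singular case the natural $\Phi$ may fail to be proper, so one exploits the freedom in the Braverman deformation: pick a regular $\tilde\xi \in \kt_M^*$, choose $\zeta$ so that $\tilde\xi+\zeta$ is regular, and multiply the resulting vector field $v^\psi$ by the slow-growing $K$-invariant function $\tau$ of Lemma \ref{lem taming moment}, producing the proper moment map $\Phi^\tau$ of \eqref{eq mu tau GH}. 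The Braverman index with taming vector field $\tau \cdot v^\psi$ coincides with the one produced by \cite{HSY1} because both lie in a homotopy family of admissible tamings with compact zero set, along which Braverman's index is invariant. Applying Theorem \ref{thm QR=0} to $\Phi^\tau$ then gives Theorem \ref{thm mult form sing}; independence of the answer from $\tilde\xi$, $\zeta$, and $\tau$ follows from the same invariance.

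The main obstacle is ingredient (3) of the introduction's outline: showing that the specific deformation of the Dirac operator used in the realization of $\pi|_K$ in \cite{HSY1} can be connected, without altering its equivariant index, to a taming satisfying the hypotheses of the $[Q,R]=0$ theorem from \cite{HS16}. In the regular case this essentially reduces to the properness statement above. The genuinely harder step is the singular case, where one must interpolate between the natural Braverman vector field of $\Phi$---which need not have compact zero set when $\xi$ lies on a wall---and the modified vector field $\tau \cdot v^\psi$ built from the regular approximation, while verifying that the zero set stays compact along the entire homotopy. Once this interpolation and the attendant index-invariance are in hand, the two halves of the argument assemble as described above.
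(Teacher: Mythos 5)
Your proposal follows essentially the same route as the paper: in the regular case, Theorem \ref{thm realise} combined with Proposition \ref{prop moment map}, properness/taming of $\Phi$ from \cite{Paradan99}, and Theorem \ref{thm QR=0}; in the singular case, the realisation with the regular-parameter map $\psi$, the deformation to the proper taming moment map $\Phi^{\tau}$ of Lemma \ref{lem taming moment}, homotopy invariance of Braverman's index (Theorem \ref{thm htp invar}), and then Theorem \ref{thm QR=0}. Two minor slips that do not affect the structure: the function $\tau$ in Lemma \ref{lem taming moment} must be taken \emph{large} (not slow-growing), and no interpolation starting from the possibly non-taming map attached to the singular $\xi$ is needed, since Theorem \ref{thm realise} is already stated for the taming map $\psi$ built from a regular $\tilde\xi$, so the only homotopy required is the one between $\psi$ and $\Phi^{\tau}$ provided by Lemma \ref{lem taming moment}.
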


If $\pi$ belongs to the discrete series, then this multiplicity formula is Theorem 2.5 in \cite{Paradan03}. The absence of the sign $(-1)^{\dim(M/K_M)/2} = (-1)^{\dim(G/K)/2}$ in that result is due to a different definition of reduced spaces and the relevant indices on them. Our proof of Theorem \ref{thm mult form} is based on a generalisation of the methods in \cite{Paradan03}, combined with Braverman's index theory described in Subsection \ref{sec QR=0}. Via a completely different method, Duflo and Vergne \cite{Duflo11} proved Theorem \ref{thm mult form} in the regular case, where $\pi^M_{\lambda, R^+_M, \chi_M}$ belongs to the discrete series. 
Duflo and Vergne used Kirillov's character formula, proved by Rossman \cite{Rossmann78}; see also \cite{Vergne79}. This formula is based on deep results of Harish-Chandra and others. Our approach uses a geometric realisation of $\pi|_K$ instead, and in addition covers the singular case.

Theorem \ref{thm mult form} can in fact be generalised to more general compact subgroups of $G$, using a functoriality result by Paradan \cite{Paradan17}. Let $K<G$ now be any compact  subgroup, not necessarily maximal. Let $K'<G$ be a maximal compact subgroup containing $K$.

Let $\Phi^{\tau}$ be as above for the action by $K'$ on $G/H$, where we take $\tau=0$, so $\Phi^{\tau} = \Phi$, in the regular case.  Suppose that the composition
\beq{eq Phi K}
\Phi_{K}\colon G/H \xrightarrow{\Phi^{\tau}} (\kk')^* \xrightarrow{p} \kk^*
\eeq
is proper, where $p$ is the restriction map. (This is true if $K'=K$ as in Theorem \ref{thm mult form}.) The multiplicity formula by Duflo and Vergne \cite{Duflo11}  for tempered representation induced from the discrete series holds for restrictions to compact subgroups $K$ with this property. Let $T$ be a maximal torus of $K$. For $\xi \in \kt^*$, we write
\[
(G/H)^{K}_{\xi} := \Phi_{K}^{-1}(\xi)/K_{\xi}.
\]
This allows us to state the most general multiplicity formula in this paper.
\begin{corollary} \label{cor mult form K'}
The restriction of $\pi$ to $K$ is admissible, and we have
\[
\pi|_{K} = (-1)^{\dim(M/K'_M)/2} \bigoplus_{\delta \in \hat K} \indx((G/H)^{K}_{(\eta'+\rho^{K})/i} ) \delta.
\]
Here, as before, $\eta$ is the highest weight of an irreducible representation $\delta \in \hat K$.
\end{corollary}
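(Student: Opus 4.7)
The plan is to bootstrap from Theorem \ref{thm mult form} applied to the maximal compact subgroup $K'$ and then invoke Paradan's functoriality result \cite{Paradan17} to pass from reduced spaces for $K'$ to reduced spaces for $K$. The underlying mechanism is that reduction commutes with restriction: if a noncompact $\Spinc$ $K'$-manifold $M$ carries a proper $K'$-moment map $\Phi'\colon M \to (\kk')^*$ whose composition with $p\colon (\kk')^* \to \kk^*$ is also proper, then the $K$-multiplicities of the formal $K'$-quantisation, viewed as a $K$-representation by restriction, are computed by indices on the $K$-reduced spaces $M^K_{(\eta+\rho^K)/i}$, exactly as in the $[Q,R]=0$ principle of Subsection \ref{sec QR=0}.

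Concretely, I would first apply Theorem \ref{thm mult form} to the pair $(G, K')$, obtaining
\[
\pi|_{K'} = (-1)^{\dim(M/K'_M)/2} \bigoplus_{\delta' \in \hat{K'}} \indx\bigl((G/H)^{K'}_{(\eta'+\rho^{K'})/i}\bigr)\, \delta',
\]
where $\eta'$ is the highest weight of $\delta'$ and $\pi|_{K'}$ is realised as a $K'$-equivariant index of the deformed Dirac operator on $G/H$ associated with the moment map $\Phi^\tau$ of Subsection \ref{sec tempered reps} (taking $\tau = 0$ in the regular case). Viewing this as a $K$-equivariant index by restriction of the group action, I would then apply Paradan's functoriality \cite{Paradan17} to the moment map $\Phi_K = p \circ \Phi^\tau$, which is proper by hypothesis. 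This rewrites the multiplicity of each $\delta \in \hat K$ in $\pi|_{K'}|_K = \pi|_K$ as
\[
[\pi|_K : \delta] = (-1)^{\dim(M/K'_M)/2}\, \indx\bigl((G/H)^{K}_{(\eta+\rho^{K})/i}\bigr),
\]
where the $K$-reduced spaces are defined as in Subsection \ref{sec ind red}, with the shifting trick of Definition \ref{def quant red} absorbing any singular values of $\Phi_K$. Admissibility of $\pi|_K$ is automatic, since properness of $\Phi_K$ makes each reduced space compact and hence each multiplicity finite.

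The delicate point, which I expect to be the main obstacle, is checking that Paradan's functoriality in \cite{Paradan17} genuinely applies to the deformed moment map $\Phi^\tau$ from the singular case of Subsection \ref{sec tempered reps}, where $\tau$ is supplied by Lemma \ref{lem taming moment}. Two compatibilities need to be confirmed: first, that the taming function can be chosen so that both $\Phi^\tau$ and $p \circ \Phi^\tau$ are proper simultaneously (so the indices for $K'$ and $K$ are defined by the same data); and second, that the shifting trick used to define the indices at singular values is stable under the restriction from $K'$ to $K$, so that the two reduced-space indices agree as predicted by functoriality. Once this coherence is in place, the corollary is a formal consequence of Theorem \ref{thm mult form} and the functoriality statement.
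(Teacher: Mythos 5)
Your proposal follows exactly the paper's argument: apply Theorem \ref{thm mult form} to the maximal compact subgroup $K'$ and then invoke Paradan's functoriality result (Theorem 1.1 in \cite{Paradan17}) for the proper composite map $\Phi_{K} = p \circ \Phi^{\tau}$ to rewrite the $K$-multiplicities as indices on the $K$-reduced spaces, with admissibility following from finiteness of these indices. The ``delicate points'' you flag are already taken care of by the standing hypothesis that $\Phi_K$ in \eqref{eq Phi K} is proper, so your proof is correct and essentially identical to the one in the paper.
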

\begin{remark} \label{rem standard reps}
In fact, Corollary \ref{cor mult form K'} applies to every representation $\pi$ of the form \eqref{eq pi ind} with $\nu \in (\ka^{\C})^*$ possibly non-imaginary, i.e.\ to every \emph{standard representation}. This includes the tempered representations by Corollary 8.8 in \cite{KZ1}.
\end{remark}

This multiplicity formula is an instance of the $\Spinc$-version of the \emph{quantisation commutes with reduction} principle. Indeed, we will see in Theorem \ref{thm realise}, which is the main result in \cite{HSY1}, that one can view $\pi|_K$ as the geometric quantisation in the $\Spinc$-sense of the action by $K$ on $G/H$, with the given almost complex structure and line bundle. If the infinitesimal character $\chi$ of $\pi$ is regular, then $G/H$ is isomorphic to the coadjoint orbit $\Ad^*(G)(\chi + \rho^{G,M})$ as $K$-equivariant $\Spinc$-manifolds. Now $\Phi$ is the natural projection of this orbit onto $\kk^*$. See Section 3.6 in \cite{HSY1} for this relation with the orbit method.  
This $\Phi$ is the moment map for the natural symplectic form on this orbit. Nevertheless, one needs a $\Spinc$-version of the quantisation commutes with reduction principle (Theorem 3.10 in \cite{HS16}; see Theorem \ref{thm QR=0} below) rather than the symplectic version (Theorem 0.1 in \cite{Zhang14}, see also Theorem 1.4 in \cite{Paradan11}). This is because the almost complex structure $J$ is not compatible with the Kostant--Kirillov symplectic form on the coadjoint orbit $\Ad^*(G)(\chi + \rho^{G,M})$; also, $L_{\lambda - \rho^M, \chi_M} $ is not a prequantum line bundle for this symplectic form. See Subsection 1.5 in \cite{Paradan03}. The bundle $L_{\lambda - \rho^M, \chi_M} $ is a $K$-equivariant prequantum line bundle for the coadjoint orbit $\Ad^*(G)2(\chi+\rho^{G.M})$, however; i.e.\ the spinor bundle \eqref{eq spinor GH} is a $\Spinc$-prequantisation of the orbit $\Ad^*(G)(\chi+\rho^{G.M})$.
See Remark \ref{rem preq line}.
 In the compact case, the $\Spinc$-version of the quantisation commutes with reduction principle was proved by Paradan and Vergne \cite{Paradan15, Paradan14, Paradan14CRAS}. 

In the orbit method, representations with singular parameters correspond to nilpotent orbits.
If $\chi$ is singular, then we use the manifold $G/H$ rather than such a nilpotent orbit. Through this desingularisation, the link with quantising nilpotent orbits is absent in our approach, but this approach does allow us to obtain the multiplicity formula in Theorem \ref{thm mult form} and Corollary \ref{cor mult form K'}.


Theorem \ref{thm mult form} and Corollary \ref{cor mult form K'} allow us the deduce properties of the behaviour of the $K$-type multiplicities of $\pi$ from the geometry of the coadjoint orbit $\Ad^*(G)(\xi + \zeta)$ if $\xi$ is regular.  In general, such properties can be deduced from the geometry of the map $\Phi\colon G/H \to \kk^*$. An immediate consequence is the following fact
%
%
%
%
%
%
about the support of the multiplicity function of the $K$-types of $\pi$. By the relative interior or relative boundary of a subset of the affine space $I(Y)$ parallel to the annihilator of $\kt_Y$ containing the image of $\Phi_Y$, we mean the interior or boundary as a subset of $I(Y)$.
\begin{corollary} \label{cor zero}
Let $K$ be as in Corollary \ref{cor mult form K'}.
All $K$-types of $\pi$ have highest weights in the relative interior of $i\Phi(G/H)  \cap i\kt^* - \rho^K$.
\end{corollary}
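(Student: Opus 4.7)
The plan is to derive the corollary directly from the multiplicity formula of Corollary \ref{cor mult form K'} together with the shift mechanism used to define indices on reduced spaces in Section \ref{sec ind red}. Setting $\xi := \eta/i \in \kt^*$, the goal is to show that if $[\pi|_K:\delta] \neq 0$, then $\xi + \rho^K/i = (\eta+\rho^K)/i$ lies in the relative interior of $\Phi(G/H) \cap \kt^*$, which is equivalent to the stated claim.

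First, I would invoke Corollary \ref{cor mult form K'} to see that the hypothesis forces $\indx((G/H)_{\xi + \rho^K/i}) \neq 0$. Unpacking Definition \ref{def quant red}, this integer is the sum $\sum_Y \indx(\calS^{\xi}_{Y,\xi})$ over the connected components $Y$ of $\Phi^{-1}(C)$, so at least one summand $\indx(\calS^{\xi}_{Y,\xi})$ is nonzero. By Theorem \ref{thm indx red}, this summand agrees with $\indx(\calS^{\xi}_{Y,\xi+\varepsilon})$ for every sufficiently small $\varepsilon \in I(Y)$ with $\xi + \varepsilon$ a regular value of $\Phi_Y\colon Y \to I(Y)$.

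Next, I would argue by contradiction. If $\xi$ were not in the relative interior of $\Phi_Y(Y)$ within $I(Y)$, then every neighbourhood of $\xi$ in $I(Y)$ would meet $I(Y)\setminus \Phi_Y(Y)$. Points of this complement are trivially regular values of $\Phi_Y$, since they are not values at all, so one can select an arbitrarily small such $\varepsilon$. Then $\Phi_Y^{-1}(\xi+\varepsilon) = \emptyset$, the reduced space $Y_{\xi+\varepsilon}$ is empty and the associated spinor bundle has zero index, forcing $\indx(\calS^{\xi}_{Y,\xi+\varepsilon}) = 0$ and hence $\indx(\calS^{\xi}_{Y,\xi}) = 0$, a contradiction. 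Therefore $\xi$ lies in the relative interior of $\Phi_Y(Y)$ inside $I(Y)$ for the $Y$ witnessing nonvanishing. Translating through the identity $\Phi_Y = \Phi|_Y - \rho^K/i$ and taking the union over such components yields that $(\eta+\rho^K)/i$ lies in the relative interior of $\Phi(G/H) \cap \kt^*$.

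The main technical step is the second paragraph above: one must be sure that properness of the moment map (in the form that gives $\Phi_{K}$ proper in Corollary \ref{cor mult form K'}) guarantees that $\Phi_Y(Y)$ is closed in $I(Y)$, so that genuine relative-boundary points do admit arbitrarily small outward perturbations. Beyond that, only routine bookkeeping with the slices $Y$ and the affine subspaces $I(Y)$ remains, together with the observation that a relative interior point of any single $\Phi_Y(Y)$ is automatically a relative interior point of the full image $\Phi(G/H)\cap\kt^*$ near that stratum.
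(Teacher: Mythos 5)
Your argument is correct and is essentially the paper's own proof, written contrapositively: the paper likewise observes that if $(\eta+\rho^K)/i$ lies outside the image of $\Phi$ or on its relative boundary, then a reduced space at some nearby value is empty, so the shifted index of Definition~\ref{def quant red} and Theorem~\ref{thm indx red} vanishes (citing the comment below Definition 5.5 in \cite{Paradan14}), forcing $[\pi|_K:\delta]=0$ by Corollary~\ref{cor mult form K'}. The only superfluous point is the closedness of $\Phi_Y(Y)$ that you flag as the main technical step: it is not needed, since a point that is not in the interior of $\Phi_Y(Y)$ relative to $I(Y)$ has, by the very definition of interior, arbitrarily small perturbations landing in $I(Y)\setminus\Phi_Y(Y)$, and such points are vacuously regular values with empty fibre.
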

\begin{proof}
Let $\delta \in \hat K$ have highest weight $\eta$. If $(\eta + \rho^K)/i$ is not in the image of $\Phi$, then 
Corollary \ref{cor mult form K'}
implies that  the multiplicity $[\pi|_K:\delta]$ is zero because $(G/H)_{(\eta+\rho^K)/i}$ is empty, and so are reduced spaces at elements close enough to $(\eta+\rho^K)/i$. If $(\eta + \rho^K)/i$ lies on the relative boundary of the image of $\Phi$, then $[\pi|_K:\delta]$ is zero because the reduced space at some element close to $(\eta + \rho^K)/i$ is empty. See 
the comment below Definition 5.5 in \cite{Paradan14}.
\end{proof} 

\begin{remark}
In the regular case, the map $\Phi$ is a moment map in the symplectic sense. So then the set $i\Phi(G/H)  \cap i\kt^* - \rho^K$ containing the support of the multiplicity function is a convex polytope. This polytope is noncompact if $G$ is; i.e.\ it is the intersection of a collection of half-spaces.
\end{remark}

\begin{remark}
Even in the case of the discrete series, it is nontrivial to determine the support of the multiplicity function from Blattner's formula. This is because of cancellations occurring in that formula. 
\end{remark}

Applications of Theorem \ref{thm mult form} and Corollary \ref{cor mult form K'} to multicplicity-free restrictions are discussed in Section \ref{sec mult free}.


\section{Ingredients of the proof}

\subsection{Quantisation commutes with reduction} \label{sec QR=0}

Consider the setting of Subsection \ref{sec ind red}. Let $\psi\colon M \to \kk$ be a smooth, $K$-equivariant map. It 
 induces a vector field $v^{\psi}$, given by
\[
v^{\psi}(m) = \ddt \exp(-t\psi(m))m,
\]
for all $m \in M$. The map $\psi$ is called \emph{taming} if the set of zeroes of $v^{\psi}$ is compact.

%

The Clifford action $c$ by $TM$ on $\Bigwedge_JTM$ is given by
\[
c(v)x = v\wedge x - v^* \lrcorner x
\]
for $m \in M$, $v \in T_mM$ and $x \in \Bigwedge_JT_mM$. Here $v^* \in T^*M$ is dual to $v$ with respect to the Hermitian metric defined by the Riemannian metric and $J$, and $\lrcorner$ denotes contraction. Let $\tilde \nabla$ be a $K$-invariant, Hermitian connection on $\Bigwedge_JTM \otimes L$, such that for all vector fields $v,w$ on $M$,
\[
[\tilde \nabla_v, c(w)] = c(\nabla^{TM}_v w),
\]
where $\nabla^{TM}$ is the Levi--Civita connection. Such a connection always exists; one is induced by the connections $\nabla$ on $L_{\det}$ and $\nabla^{TM}$ on $TM$, see e.g.\ Proposition D.11 in \cite{Lawson89}. After we identify $T^*M \cong TM$ via the Riemannian metric, the Clifford action $c$ induces a map 
\[
c\colon T^*M \otimes  \Bigwedge_JTM \otimes L \to \Bigwedge_JTM \otimes L.
\]
This allows us to define the \emph{Dirac operator} $D$ as the composition
\[
D\colon \Gamma^{\infty}(\Bigwedge_JTM \otimes L) \xrightarrow{\tilde \nabla}  \Gamma^{\infty}(T^*M \otimes \Bigwedge_JTM \otimes L) \xrightarrow{c} \Gamma^{\infty}(\Bigwedge_JTM \otimes L).
\]

Let $f \in C^{\infty}(M)^K$ be nonnegative. 
The \emph{Dirac operator deformed by $f\psi$} is the operator
\[
D -ifc(v^{\psi})
\]
on the space $\Gamma_{L^2}^{\infty}(\Bigwedge_J TM \otimes L)$ of square-integrable smooth sections of $\Bigwedge_J TM \otimes L$.
For a nonnegative function $\chi \in C^{\infty}(M)^K$, we say that the function $f$ is \emph{$\chi$-admissible} if, outside a compact set,
\[
\frac{f^2}{\|df\| + f + 1} \geq \chi.
\]
For any such function $\chi$, there exist $\chi$-admissible functions, see Lemma 3.10 in \cite{HS1}. Braverman's index theory \cite{Braverman02} for deformed Dirac operators is based on the following result.
\begin{theorem}[Braverman] \label{thm index}
If $\psi$ is taming, then there is a nonnegative function $\chi \in C^{\infty}(M)^K$, such that for all $\chi$-admissible functions $f$, and all irreducible representations $\delta$ of $K$, the multiplicities $m_{\delta}^{+}$ and $m_{\delta}^-$ of $\delta$ in the kernel of $D -ifc(v^{\psi})$ restricted to even and odd degree forms, respectively, is finite. The difference $m_{\delta}^+ - m_{\delta}^-$ is independent of $f$ and $\nabla$.
\end{theorem}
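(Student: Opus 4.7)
The plan is to follow Braverman's strategy from \cite{Braverman02}: a Weitzenb\"ock identity for the deformed Dirac operator combined with elliptic estimates on the non-compact manifold $M$, yielding Fredholmness on each $K$-isotypic component. I would first expand the square, using $c(v^{\psi})^2 = -\|v^{\psi}\|^2$ together with the standard Weitzenb\"ock formula $D^2 = \tilde{\nabla}^{*}\tilde{\nabla} + \calR$ for a zeroth-order curvature term $\calR$, to obtain schematically
\[
(D - ifc(v^{\psi}))^2 = \tilde{\nabla}^{*}\tilde{\nabla} + \calR + f^2 \|v^{\psi}\|^2 - if\{D, c(v^{\psi})\} - c(df)c(v^{\psi}),
\]
in which $\{D, c(v^{\psi})\}$ is a zeroth-order operator depending only on $\nabla v^{\psi}$ and the Clifford connection. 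Both $\calR$ and $\{D, c(v^{\psi})\}$ can then be bounded pointwise by smooth $K$-invariant functions intrinsic to $(\psi, \nabla)$.

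Next I would use the taming condition to pin down $\chi$. Since $\Zeroes(v^{\psi})$ is compact, there is a relatively compact $K$-invariant open set outside of which $\|v^{\psi}\|^2$ is bounded below by a strictly positive smooth function. Combining this with the pointwise bounds above, one can pick $\chi$ so that, for any $\chi$-admissible $f$,
\[
f^2 \|v^{\psi}\|^2 \geq C(\|df\| + f + 1)
\]
outside a compact set, with the right-hand side a proper function on $M$. Then $\langle (D - ifc(v^{\psi}))^2 s, s\rangle$ is bounded below by $\|\tilde{\nabla} s\|^2$ plus a proper potential outside a compact set. A cut-off and partition-of-unity argument, combined with G\aa rding's inequality in the elliptic region, shows that the deformed operator is Fredholm on each $K$-isotypic Sobolev completion, so the multiplicities $m_\delta^{\pm}$ are finite.

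For independence of $f$ and $\nabla$, given admissible data $(f_0, \nabla_0)$ and $(f_1, \nabla_1)$, I would form the linear path $(f_t, \nabla_t)$. After enlarging $\chi$ so that the Weitzenb\"ock estimate holds uniformly in $t$, the resulting family is a norm-continuous family of Fredholm operators on each $K$-isotypic completion, and the usual homotopy invariance of the Fredholm index closes the argument. The main obstacle I expect is precisely this uniformity: one must ensure the analytic estimates are $K$-equivariant, proper, and uniform in the homotopy parameter, so that the index is well-defined on each isotypic component rather than only on the full (possibly infinite-dimensional) $K$-invariant subspace. The $\chi$-admissibility condition is designed exactly to provide the a priori decay at infinity that makes this uniformity available.
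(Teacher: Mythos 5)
The paper gives no proof of this statement at all: it is quoted directly from Braverman (Theorem 2.9 in \cite{Braverman02}), so the relevant comparison is with Braverman's argument, which your sketch tries to reconstruct. As written, your sketch has a genuine gap at its first step: the anticommutator $\{D, c(v^{\psi})\}$ is \emph{not} a zeroth-order operator. For any vector field $v$ one has $\{D, c(v)\} = -2\tilde\nabla_{v} + \sum_i c(e_i)c(\nabla^{TM}_{e_i}v)$, so the cross term in $(D - ifc(v^{\psi}))^2$ contains the genuinely first-order piece $2if\,\tilde\nabla_{v^{\psi}}$, which cannot be bounded pointwise by smooth functions intrinsic to $(\psi,\nabla)$ and is not dominated by $f^2\|v^{\psi}\|^2 - C(\|df\|+f+1)$. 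Indeed, the deformed operator is in general \emph{not} Fredholm on all of $\Gamma^{\infty}_{L^2}(\Bigwedge_J TM \otimes L)$ — that is exactly why the conclusion is only finiteness of each isotypic multiplicity $m_{\delta}^{\pm}$ — so no choice of $\chi$ can make your Weitzenb\"ock estimate close uniformly on the whole space. The key device in Braverman's proof, missing from your sketch, is to trade $\tilde\nabla_{v^{\psi}}$ for the infinitesimal action: writing $\psi = \sum_j \psi_j X_j$ in an orthonormal basis of $\kk$, the analogue of \eqref{eq def moment} on the spinor bundle gives $\tilde\nabla_{(X_j)^M} = \calL_{X_j} - \mu(X_j)$ with $\mu(X_j)$ a bundle endomorphism, so the cross term becomes $f$ times Lie-derivative terms plus zeroth-order terms. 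On the $\delta$-isotypic component the operators $\calL_{X_j}$ are bounded by a constant $C_{\delta}$, so the offending term has size of order $C_{\delta}\, f\,\|\psi\|$, and only then is it dominated by $f^2\|v^{\psi}\|^2$ outside a compact set, for $f$ admissible with respect to a $\chi$ built from $\|\psi\|$, $\|\nabla v^{\psi}\|$ and curvature terms. This is precisely where the $\delta$-dependence, and hence the purely isotypic nature of the statement, enters; without this step the estimate you propose does not exist.

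A secondary point concerns independence of $f$ and $\nabla$: since the operators are unbounded and, as noted, not Fredholm on the full Hilbert space, ``norm-continuous family of Fredholm operators'' is not the right framework. Braverman obtains this invariance from a deformation/cobordism argument (the same mechanism quoted as Theorem \ref{thm htp invar} in this paper), applied isotypically with the estimates above made uniform in the deformation parameter. Your linear path $(f_t,\nabla_t)$ is the right object, but its justification must again run through the Lie-derivative rewriting and the per-isotype estimates, not through Fredholm theory on the whole space.
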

See Theorem 2.9 in \cite{Braverman02} for a more general result.

We write $\hat R(K)$ for the abelian group
\[
\hat R(K) = \Bigl\{\bigoplus_{\delta \in \hat K} m_{\delta} \delta; m_{\delta} \in \Z \Bigr\}.
\]
I.e.\ $\hat R(K)$ contains formal differences of possibly infinite-dimensional representations of $K$, in which all irreducible representations have finite multiplicities. 
\begin{definition}\label{def index}
In the setting of Theorem \ref{thm index}, the \emph{equivariant index} of the pair $(\Bigwedge_J TM \otimes L, \psi)$ is
\[
\indx_K(\Bigwedge_J TM \otimes L, \psi) = \displaystyle{\bigoplus}_{\delta \in \hat K} (m_{\delta}^+ - m_{\delta}^-)\delta \quad \in \hat R(K).
\]
\end{definition}

A property of this index is invariance under homotopies of taming maps. Two taming maps $\psi_0, \psi_1\colon M \to \kk$ are \emph{homotopic} if there is a taming map $\psi\colon M \times [0,1] \to \kk$ such that for all $m \in M$, we have $\psi(m,t) = \psi_0(m)$ if $t \in {[0,1/3[}$, and $\psi(m,t) = \psi_1(m)$  if $t \in {]2/3, 1]}$. 
\begin{theorem}[Braverman]\label{thm htp invar}
If $\psi_0$ and $\psi_1$ are homotopic taming maps, then
\[
\indx_K(\Bigwedge_J TM \otimes L, \psi_0) =\indx_K(\Bigwedge_J TM \otimes L, \psi_1). 
\]
\end{theorem}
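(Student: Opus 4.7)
The plan is to realise the two indices as the values at $t=0$ and $t=1$ of a continuous family of equivariant Fredholm operators parametrised by $t \in [0,1]$, and then invoke homotopy invariance of the Fredholm index on each $K$-isotypic component separately.

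First I would observe that the taming hypothesis on $\psi\colon M \times [0,1] \to \kk$, together with the fact that $K$ acts trivially on the $[0,1]$ factor, forces the individual zero sets $Z_t := \Zeroes(v^{\psi_t})$ of $v^{\psi_t} := v^{\psi(\cdot, t)}$ to lie in a common $K$-invariant compact subset $Z_M \subset M$, and gives a uniform lower bound $\|v^{\psi_t}\|^2 \geq \varepsilon > 0$ outside any fixed $K$-invariant neighbourhood of $Z_M$ for all $t \in [0,1]$. Using this uniform bound and Lemma 3.10 of \cite{HS1}, one can choose a single nonnegative $\chi \in C^{\infty}(M)^K$ and a single $\chi$-admissible function $f$ such that Theorem \ref{thm index} applies simultaneously to every $\psi_t$. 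The family $D_t := D - ifc(v^{\psi_t})$ then varies continuously in $t$, since only the zeroth-order term $-ifc(v^{\psi_t})$ depends on $t$ and it does so continuously.

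Next, for each $\delta \in \hat K$ I would restrict $D_t^{\pm}$ to the $\delta$-isotypic components of $\Gamma^{\infty}_{L^2}(\Bigwedge_J TM \otimes L)^{\pm}$, obtaining operators $D_t^{\delta, \pm}$ between appropriate Sobolev completions. Theorem \ref{thm index} provides the Fredholm property of $D_t^{\delta, +}$, with finite-dimensional kernel of dimension $m_{\delta}^+(t)$ and cokernel of dimension $m_{\delta}^-(t)$. Continuity of $t \mapsto D_t^{\delta, +}$ in a suitable operator topology (e.g.\ the gap topology), combined with the stability of the Fredholm index under continuous perturbations, then shows that $m_\delta^+(t) - m_\delta^-(t)$ is locally constant in $t$ on $[0,1]$, hence equal at $t=0$ and at $t=1$. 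Summing the contributions over $\delta \in \hat K$ in $\hat R(K)$ yields the desired equality.

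The main obstacle is verifying the continuity of the family $\{D_t^{\delta, \pm}\}$ as a path in the Fredholm operators. Concretely, one needs to propagate the quadratic estimates behind the proof of Theorem \ref{thm index}, most notably the lower bound on $(D_t^{\delta})^* D_t^{\delta}$ of the form $f^2\|v^{\psi_t}\|^2 - (\text{lower-order terms in } f)$, so that the resulting resolvent bounds outside a compact set hold with constants independent of $t \in [0,1]$. Once this uniformity is established, the remaining argument is the general stability of the equivariant Fredholm index under continuous homotopies; this is the strategy carried out in \cite{Braverman02} which we would adapt directly to the family $D_t$.
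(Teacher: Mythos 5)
The paper itself does not give an analytic proof at all: it deduces the statement as a special case of Braverman's cobordism invariance theorem (Theorem 3.7 in \cite{Braverman02}), applied to the cylinder $M\times[0,1]$ equipped with the taming map $\psi$. Your proposal instead tries to prove it as local constancy of the index along a continuous path of Fredholm operators, and the crucial step of that route is exactly where the argument breaks down. The difficulty you flag as ``the main obstacle'' --- continuity of $t\mapsto D_t = D - ifc(v^{\psi_t})$ in the gap (or any index-stable) topology --- cannot be extracted from the hypotheses. The perturbation $-ifc(v^{\psi_t})$ is an \emph{unbounded} potential, and the difference $D_t - D_s = -ifc(v^{\psi_t}-v^{\psi_s})$ has ``size'' $\sup_M f\,\|v^{\psi_t}-v^{\psi_s}\|$, which is typically infinite for every $t\neq s$: the homotopy $\psi$ gives no control of $v^{\psi_t}-v^{\psi_s}$ relative to $v^{\psi_s}$ at infinity, and no relative-boundedness with small bound is available. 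Relatedly, your claimed uniform bound $\|v^{\psi_t}\|^2\geq\varepsilon>0$ outside a fixed neighbourhood of $Z_M$ is false on a noncompact $M$: compactness of the zero set of $v^{\psi}$ on $M\times[0,1]$ gives nonvanishing of $v^{\psi_t}$ outside such a neighbourhood, uniformly in $t$, but the norms may still decay to $0$ at infinity (which is precisely why the admissible function $f$ must grow). One can indeed choose a single $\chi$ and a single $\chi$-admissible $f$ valid for all $t$ (take a smooth majorant of the $\chi_t$, using compactness of $[0,1]$), and this yields \emph{uniform} Fredholm estimates for the whole family; but uniform Fredholmness of each $D_t^{\delta,\pm}$ is not the same as continuity of the family, and without continuity the locally-constant-index argument does not get off the ground.

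A second, smaller point: your closing appeal to \cite{Braverman02} as the place where ``this strategy is carried out'' is not accurate. Braverman does not prove homotopy invariance by deforming a Fredholm family; he proves a cobordism invariance theorem by gluing/APS-type arguments and then obtains homotopy invariance by viewing $(M\times[0,1],\psi)$ as a taming cobordism between $(M,\psi_0)$ and $(M,\psi_1)$ --- which is also why the compactness of the zero set on the whole cylinder (and not merely of each $\Zeroes(v^{\psi_t})$) enters in an essential, global way; under homotopies that are not taming in this total sense the index can in fact jump. So to complete your proof you would either have to supply a genuinely new argument establishing index constancy for such uncontrolled unbounded perturbations (essentially re-proving Braverman's cobordism invariance), or simply quote Braverman's Theorem 3.7 as the paper does.
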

This is a special case of cobordism invariance of the index, Theorem 3.7 in \cite{Braverman02}.

In \cite{HS16}, it was proved that the index of Definition \ref{def index} satisfies a $\Spinc$-version of the \emph{quantisation commutes with reduction} principle of Guillemin and Sternberg \cite{Guillemin82}. This followed results for compact symplectic manifolds,  \cite{Meinrenken98,Meinrenken99}, see also \cite{Paradan01,Zhang98}; for noncompact symplectic manifolds  \cite{Zhang14}, see also  \cite{Paradan11}; and for compact $\Spinc$-manifolds \cite{Paradan14}, see also \cite{Paradan15,Paradan14CRAS}. The interpretation of the $K$-equivariant index of a Dirac operator deformed by a vector field such as $v^{\Phi}$ as a geometric quantisation goes back to \cite{HS16, Paradan03, Paradan11, Zhang14,  Vergne06}.
\begin{theorem} \label{thm QR=0}
In the setting of Theorem \ref{thm index}, 
take $\psi = \Phi$, the moment map of a $K$-invariant, Hermitian connection on $L_{\det}$.
Suppose that $\Phi$ is taming and proper, and that the generic stabiliser of the action by $K$ on $M$ is abelian. 
Let $\eta$ be the highest weight of $\delta$.
Then 
\[
m_{\delta}^+ - m_{\delta}^- = \indx(M_{(\eta+\rho^K)/i}).
\]
I.e.\ 
\[
\indx_K(\Bigwedge_J TM \otimes L, \Phi) = \displaystyle{\bigoplus}_{\delta \in \hat K} \indx(M_{(\eta+\rho^K)/i}) \delta.
\]
\end{theorem}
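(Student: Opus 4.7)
The strategy should combine Braverman's index theory for deformed Dirac operators on noncompact manifolds (Theorems \ref{thm index} and \ref{thm htp invar}) with the localisation techniques developed by Paradan \cite{Paradan03, Paradan01} and the compact $\Spinc$ quantisation-commutes-with-reduction theorem of Paradan--Vergne \cite{Paradan14, Paradan15}. The key point is that, although $M$ is noncompact, properness of $\Phi$ together with the taming hypothesis should force the deformed index to localise to a compact subset, where the compact theorem can be invoked.

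\textbf{First steps.} By Theorem \ref{thm index}, the index on the left-hand side is well-defined, and by Theorem \ref{thm htp invar} it depends only on the proper homotopy class of $\Phi$. This allows us, if convenient, to perturb $\Phi$ slightly inside its homotopy class so that as many of the relevant weights $(\eta+\rho^K)/i$ as possible are regular values. The zero set $Z = \Zeroes(v^{\Phi})$ coincides with the critical set of $\|\Phi\|^2$, and properness of $\Phi$ combined with tameness forces $Z$ to be compact. It decomposes naturally as a disjoint union of compact pieces $K \cdot Z_\beta$, where $Z_\beta = M^\beta \cap \Phi^{-1}(\beta)$ and $\beta$ runs over a discrete set of elements of $\kk$ (those for which $\beta^M$ has a nonempty zero set inside $\Phi^{-1}(\beta)$).

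\textbf{Localisation and matching.} Following Paradan's strategy, I would choose an admissible taming function $f$ so that the deformed Dirac operator $D - ifc(v^{\Phi})$ becomes concentrated, in an excision sense, on an arbitrarily small $K$-invariant tubular neighbourhood of $Z$. This should yield a locally finite decomposition
\[
\indx_K(\Bigwedge_J TM \otimes L, \Phi) = \bigoplus_{\beta} \indx_K^\beta,
\]
with each $\indx_K^\beta$ supported near $K \cdot Z_\beta$. On such a neighbourhood the geometry is essentially compact, so one can apply the compact $\Spinc$-version of quantisation commutes with reduction to identify the multiplicity of $\delta \in \hat K$ in $\indx_K^\beta$ with the contribution to $\indx(M_{(\eta+\rho^K)/i})$ coming from the portion of $\Phi^{-1}((\eta+\rho^K)/i)/T$ lying over $\beta$. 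The $\rho^K/i$-shift is precisely the one encoded in Lemma \ref{lem Spinc red}, whose hypothesis is guaranteed by the assumption that generic $K$-stabilisers are abelian. Summing over $\beta$ and comparing with Kawasaki's index theorem applied to each orbifold piece of the reduced spaces yields the claimed equality $m_\delta^+ - m_\delta^- = \indx(M_{(\eta+\rho^K)/i})$.

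\textbf{Main obstacle.} The technical heart lies in justifying the localisation in the noncompact setting: one must rule out contributions to the kernel of $D - ifc(v^{\Phi})$ coming from the noncompact ends of $M$, and verify that the excision used to pass from each neighbourhood of $K\cdot Z_\beta$ to a compact model does not alter the equivariant index. This should follow from a cobordism argument using Theorem \ref{thm htp invar}, combined with Braverman's estimates showing that $D - ifc(v^{\Phi})$ is uniformly invertible outside neighbourhoods of $Z$ for admissible $f$. A secondary subtlety is the singular reduced space case: at singular values of $\Phi$ one must invoke the shift-desingularisation built into Definition \ref{def quant red}, and check that the localisation is compatible with this shift, which is precisely the content of Theorem \ref{thm indx red}.
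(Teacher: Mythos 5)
The paper does not actually prove this statement: it is imported wholesale as a special case of Theorem 3.10 in \cite{HS16} (where neither tameness of $\Phi$, nor abelian generic stabilisers, nor the almost-complex origin of the $\Spinc$-structure is assumed), so the only honest comparison is with the proof in that reference. Your sketch follows the broadly correct strategy of that line of work (Braverman's deformed index, localisation to $\Zeroes(v^{\Phi}) = \Crit(\|\Phi\|^2) = \bigcup_{\beta} K\cdot(M^{\beta}\cap\Phi^{-1}(\beta))$, comparison with the compact Paradan--Vergne theorem), but as a proof it has a genuine gap exactly where the real work lies. The claim that each localised piece $\indx_K^{\beta}$ can be ``identified with the contribution to $\indx(M_{(\eta+\rho^K)/i})$ coming from the portion of $\Phi^{-1}((\eta+\rho^K)/i)/T$ lying over $\beta$'' is not a meaningful matching: for $\beta \neq 0$ the set $Z_{\beta}$ has no natural relation to the level set $\Phi^{-1}((\eta+\rho^K)/i)$, and the actual content of the theorem is a \emph{vanishing} statement, namely that the pieces with $\beta \neq 0$ contribute zero to the multiplicity of the relevant $\delta$. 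Establishing this requires the delicate weight estimates for the $K_\beta$-action on the normal bundle of $M^{\beta}$ (and, in the noncompact setting, uniform invertibility estimates for the deformed operator on noncompact neighbourhoods of $K\cdot Z_\beta$, which need not be ``essentially compact'' as you assert), together with a separate argument identifying the $\beta$ near $0$ (shifted) contribution with the reduced-space index. None of this is supplied by citing the compact theorem on a tubular neighbourhood.

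A secondary inaccuracy: Theorem \ref{thm htp invar} gives invariance only under homotopies \emph{through taming maps}, and a small perturbation of $\Phi$ is in general no longer the moment map of any connection, so you cannot ``perturb $\Phi$ inside its homotopy class so that $(\eta+\rho^K)/i$ becomes a regular value'' and still be in the setting of Theorem \ref{thm QR=0}; singular values are instead handled on the reduced-space side, through the shift built into Definition \ref{def quant red} and Theorem \ref{thm indx red}, as the cited proof in \cite{HS16} does. In short, your outline is a reasonable description of the architecture of the external proof, but the key vanishing/identification steps are asserted rather than proved, and within the present paper the intended ``proof'' is simply the citation of Theorem 3.10 of \cite{HS16}.
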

This is a special case of Theorem 3.10 in \cite{HS16}. In that theorem it was not assumed that $\Phi$ is taming,  that the generic stabiliser is abelian, or that the $\Spinc$-structure is induced by an almost complex structure and a line bundle.

\subsection{A realisation of tempered representations restricted to $K$}

As in Subsection \ref{sec tempered reps}, let $\pi$ be a tempered representation of $G$, and write
\[
\pi = \Ind_{MAN}^G(\pi^M_{\lambda, R^+_M, \chi_M} \otimes e^{\nu} \otimes 1_N)
\]
as in \eqref{eq pi ind}. Let $H$ be the corresponding Cartan subgroup. In \cite{HSY1}, we realised the restriction of $\pi$ to $K$ as an equivariant index in the sense of Definition \ref{def index} of a deformed Dirac operator on $G/H$. We briefly review the construction here.

Consider the spinor bundle \eqref{eq spinor GH}, and the map $\Phi$ of Proposition \ref{prop moment map}, but now  for any elements
 $\xi \in \kt_M^*$ and $\zeta \in \ka^*$ such that $(\alpha, i\xi) > 0$ for all $\alpha \in R^+_M$, and $\xi + \zeta \in \kh^*$ is regular for the roots of $(\kg^{\C}, \kh^{\C})$. Then the map $\Phi$ is  taming by Proposition 2.1 in \cite{Paradan99}. 
\begin{theorem}\label{thm realise} 
We have
\[
 \pi|_K = (-1)^{\dim(M/K_M)/2}
\indx_K\bigl(\Bigwedge_{J} T(G/H) \otimes L_{\lambda- \rho^M, \chi_M}, \Phi \bigr). 
\]
\end{theorem}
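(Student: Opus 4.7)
The plan is to deduce Theorem \ref{thm realise} from Paradan's corresponding realisation for the (limit of) discrete series representation $\pi^M_{\lambda, R^+_M, \chi_M}$ of the Levi factor $M$, combined with an ``induction in stages'' principle for equivariant indices of deformed Dirac operators. The starting point is the classical identity, which follows from the Iwasawa decomposition $G = KAN$ and the fact that $K \cap MAN = K_M$:
\[
\pi|_K = \Ind_{K_M}^K \bigl(\pi^M_{\lambda, R^+_M, \chi_M}|_{K_M}\bigr),
\]
where the $e^{\nu}\otimes 1_N$ factor disappears upon restriction to $K$ because it is trivial on $K_M$. Paradan's geometric realisation of (limits of) discrete series representations of $M$ (Theorem 2.5 in \cite{Paradan03}, or an analogue) expresses $\pi^M_{\lambda, R^+_M, \chi_M}|_{K_M}$ as a $K_M$-equivariant index of a deformed $\Spinc$-Dirac operator on $M/H_M$, with spinor bundle built from $J_{\km/\kt_M}$ and the line bundle $M\times_{H_M} \C_{\lambda-\rho^M}\boxtimes \chi_M$, and with taming map coming from an appropriate moment map on $M/H_M$.

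Next I would realise $G/H$ as a fibre bundle over $K/K_M \cong G/MAN$ with fibre $MAN/H \cong M/H_M \times N$, so that
\[
G/H \;\cong\; K \times_{K_M} (M/H_M \times N).
\]
Under this identification, the decomposition \eqref{eq decomp gh} splits the almost complex structure $J$ into the piece $J_{\km/\kt_M}$ along the $M/H_M$ direction, the piece $J_{\zeta}$ along $\kn^-\oplus\kn^+$, and the horizontal $K/K_M$ direction, and the line bundle $L_{\lambda-\rho^M,\chi_M}$ restricted to each fibre agrees with the line bundle entering Paradan's formula. The ``induction in stages'' step for equivariant indices is the principle that the $K$-equivariant index on an associated bundle $K\times_{K_M} F$ equals $\Ind_{K_M}^K$ of the $K_M$-equivariant index on $F$. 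Combined with Braverman's cobordism/homotopy invariance (Theorem \ref{thm htp invar}), this reduces the calculation on $G/H$ to one on $M/H_M \times N$.

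It then remains to analyse the $N$-factor. Because $\zeta \in \ka$ has strictly positive weights on $\kn = \kn^+$, the Kirwan-type vector field $v^{\Phi}$ on the $N$-direction has a unique, nondegenerate zero (the origin of $N$), so the Witten-type deformation localises the $\kn^{-}\oplus\kn^{+}$ contribution to a point; up to a sign from the complex structure $J_{\zeta}$ on this even-real-dimensional space, this contribution is the trivial representation of $K_M$. Finally, a careful accounting of $\Spinc$-structures shows that the composite sign is exactly $(-1)^{\dim(M/K_M)/2}$, coming from the comparison between the $J$-spinor bundle on $G/H$ and the Mackey-type inductive spinor bundle built from Paradan's data: the factor $\Bigwedge_{\C}(\km/\kt_M)$ contributes $(-1)^{\dim(M/K_M)/2}$ when re-expressed via the real orientation used in Paradan's conventions.

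The main obstacle is the simultaneous handling of the noncompactness in the $AN$ direction, the singularity at $\xi$, and the taming condition. One must verify that the map $\Phi$ of Proposition \ref{prop moment map}, built from $\xi+\zeta$, really is taming (this uses Paradan's Proposition 2.1 in \cite{Paradan99} together with the strict positivity of $\zeta$ on $\Sigma^{+}$), and that the homotopy invariance of Theorem \ref{thm htp invar} permits the deformation needed to decouple the $N$-fibre from the $M/H_M$-fibre without changing the index. Once this decoupling is justified rigorously, the sign bookkeeping and the identification of the inductive spinor structure with $\Bigwedge_J T(G/H)\otimes L_{\lambda-\rho^M,\chi_M}$ become essentially a diagram chase.
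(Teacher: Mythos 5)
The paper does not actually prove Theorem \ref{thm realise} here: it is imported verbatim as Theorem 3.10 of \cite{HSY1}, and the present text only reviews the ingredients (the linearisation $E = K\times_{H_M}(\ks_M\oplus\kn)$ of Subsection \ref{sec Spinc lin}, Braverman's homotopy invariance). Your outline is, in broad strokes, a reconstruction of the strategy of that reference: the compact-picture identity $\pi|_K = \Ind_{K_M}^K\bigl(\pi^M_{\lambda,R^+_M,\chi_M}|_{K_M}\bigr)$, an associated-bundle description of $G/H$, reduction to a geometric realisation of the (limit of) discrete series of $M$, and localisation along the nilpotent directions. So the approach is the right one, but as a standalone proof it has genuine gaps.

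First, your only representation-theoretic input is ``Paradan's Theorem 2.5 in \cite{Paradan03}, or an analogue'': that result concerns the discrete series, i.e.\ regular $\lambda$, whereas Theorem \ref{thm realise} is needed (and used in this paper) for limits of discrete series and, per Remark \ref{rem standard reps}, for all standard representations; supplying the ``analogue'' for singular $\lambda$ is a substantive part of \cite{HSY1}, not a footnote. Second, the ``induction in stages'' and ``decoupling of the $N$-fibre'' steps are exactly the technical heart, and you defer them: Braverman's index on a noncompact associated bundle $K\times_{K_M}(M/H_M\times N)$ is not formally multiplicative, the taming map $\Phi$ of Proposition \ref{prop moment map} does not respect your product decomposition a priori, and one must produce a homotopy through \emph{taming} maps (and through almost complex structures, as with $\Psi^*J\simeq J^E$ in Subsection \ref{sec Spinc lin}) to reach a model where the fibrewise computation is legitimate; calling this ``essentially a diagram chase'' understates where the work lies. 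Third, the claim that the $\kn^-\oplus\kn^+$ direction contributes ``the trivial representation of $K_M$ up to a sign'' is precisely the point at which the argument could fail: for a linear deformation on a complex vector space the model index is either $\C$ or an infinite-dimensional $\mathrm{Sym}$-type space, depending on the relative orientation of $J_\zeta$ and the taming vector field, so this must be computed from the positivity of $\zeta$ on $\Sigma^+$ and the definition $J_\zeta=\theta|\ad(\zeta)|^{-1}\ad(\zeta)$ rather than asserted --- an error here changes $\pi|_K$ by infinitely many $K$-types, not by a sign. The same goes for the sign $(-1)^{\dim(M/K_M)/2}$: attributing it to $\Bigwedge_{\C}(\km/\kt_M)$ ``in Paradan's conventions'' is plausible but is bookkeeping you would need to carry out explicitly.
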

This is Theorem 3.10 in \cite{HSY1}. It is the last ingredient of the proof of Theorem \ref{thm mult form}. Theorem \ref{thm realise} in fact applies more generally to every standard representation $\pi$; see Remark 3.12 in \cite{HSY1}.

\medskip
\noindent
\emph{Proof of Theorem \ref{thm mult form}.}
Let $\delta \in \hat K$. Let $\xi$ be as in \eqref{eq def xi}. First suppose that $\xi$ is regular, and choose $\zeta$ such that 
 $\xi + \zeta \in \kh^*$ is regular for the roots of $(\kg^{\C}, \kh^{\C})$. Then Theorem \ref{thm realise} states that
\[
[\pi|_K: \delta] =  (-1)^{\dim(M/K_M)/2}\bigl[
\indx_K\bigl(\Bigwedge_{J} T(G/H) \otimes L_{\lambda- \rho^M, \chi_M}, \Phi \bigr):\delta \bigr].
\]
By Proposition \ref{prop moment map}, the map $\Phi$ is a moment map for this specific choice of $\xi$. It is proper by (1.1) in \cite{Paradan99} and taming as we saw above. So Theorem \ref{thm QR=0} implies the claim.

If $\xi$ is singular, 
let $\psi\colon G/H \to \kk^*$ be given by
\[
\psi(gH) = \Ad^*(g)(\tilde \xi + \zeta)|_{\kk},
\]
for $\tilde \xi \in \kt^*_M$ such that
$(\alpha, i\tilde \xi) > 0$ for all $\alpha \in R^+_M$, and $\zeta \in \ka^*$ such that $\xi + \zeta \in \kh^*$ is regular for the roots of $(\kg^{\C}, \kh^{\C})$.
Let $\Phi^{\tau}\colon G/H \to \kk$ be as in Lemma \ref{lem taming moment}. Then $\Phi^{\tau}$ is a taming, proper moment map, and, by that Lemma and Theorem \ref{thm realise},
\[
\begin{split}
\indx_K\bigl(\Bigwedge_{J} T(G/H) \otimes L_{\lambda- \rho^M, \chi_M}, \Phi^{\tau} \bigr) &= \indx_K\bigl(\Bigwedge_{J} T(G/H) \otimes L_{\lambda- \rho^M, \chi_M}, \psi \bigr) \\
&= (-1)^{\dim(M/K_M)/2} \pi|_K.
\end{split}
\]
In the first equality we used Theorem \ref{thm htp invar}. The claim again follows from Theorem \ref{thm QR=0}.
\hfill $\square$

\begin{proof}[Proof of Corollary \ref{cor mult form K'}.]
Applying Theorem \ref{thm mult form} in this paper and Theorem 1.1 in \cite{Paradan17}, we obtain
\[
\begin{split}
\pi|_{K} &=  (-1)^{\dim(M/K'_M)/2} \bigl. \Bigl(\bigoplus_{\delta' \in \hat K'} \indx((G/H)_{(\eta'+\rho^{K'})/i}) \delta'\Bigr)\Bigr|_{K} \\
&=
 (-1)^{\dim(M/K'_M)/2} \bigoplus_{\delta \in \hat K} \indx((G/H)^{K}_{(\eta+\rho^{K})/i}) \delta.
\end{split}
\]
Here $\eta'$ is the highest weight of an irreducible representation $\delta' \in \hat K'$.
\end{proof}

\medskip
It remains to prove Proposition \ref{prop moment map} and to show how to handle the singular case, see Lemma \ref{lem taming moment}. This is done in the next section.


\section{A $\Spinc$-moment map on $G/H$}\label{sec moment GH}

\subsection{$\Spinc$-structures on linearised homogeneous spaces}\label{sec Spinc lin}

As an intermediate step in the proof of Proposition \ref{prop moment map}, we will use a $K$-equivariant partial linearisation of the space $G/H$ that was introduced in Subsection 4.2 of \cite{HSY1}. Let $\kg = \kk \oplus \ks$ be the Cartan decomposition defined by $\theta$. Write $\km = \kk_{M} \oplus \ks_M$, with $\kk_M \subset \kk$ and $\ks_M \subset \ks$. Let $H_M := H \cap M$. Then $H_M$ may be disconnected, but its Lie algebra is $\kt_M$. Let
\[
E:= K\times_{H_M} (\ks_M \oplus \kn)
\]
be the quotient of $K \times (\ks_M \oplus \kn )$ be the action by $H_M$ defined by
\[
h\cdot (k, X+Y) = (kh^{-1}, \Ad(h)(X+Y)),
\]
for $h \in H_M$, $k \in K$, $X \in \ks_M$ and $Y \in \kn$. Lemma 4.2 in \cite{HSY1} states that the map $\Psi\colon E \to G/H$ defined by
\[
\Psi([k, X+Y]) = k\exp(X)\exp(Y)H
\]
for $k \in K$, $X \in \ks_M$ and $Y \in \kn$, is a well-defined, $K$-equivariant diffeomorphism. In this sense, $E$ is a partial linearisation of $G/H$.

For every $X \in \ks_M$ and $Y \in \kn$, we have the linear isomorphism
\beq{eq iso TE}
T_{[e, X+Y]}E \xrightarrow{\cong} \kk/\kt_M \oplus \ks_M \oplus \kn = \kg/\kh
\eeq
defined by
\[
(U+\kt_M, V+W) \mapsto \ddt[\exp(tU), X+Y+t(V+W)],
\]
for $U \in \kk$, $V \in \ks_M$ and $W \in \kn$. Let $J^E$ be the $K$-invariant almost complex structure on $E$ corresponding to the complex structure $J_{\kg/\kh}$ (defined in Subsection \ref{sec tempered reps}) via the isomorphism \eqref{eq iso TE}. The almost complex structure $\Psi^*J$ on $E$ corresponding to $J$ via $\Psi$ differs from $J^E$ because it corresponds to $J_{\kg/\kh}$ via a different isomorphism \eqref{eq iso TE}. This is worked out in Subsection 4.3 of \cite{HSY1}, where it it also shown that $\Psi^*J$ and $J^E$ are $K$-equivariantly homotopic.

\begin{lemma}\label{lem gh complex}
As a complex representation of $T_M$, the space $\kg/\kh$ decomposes as
\[
\kg/\kh = \bigoplus_{\alpha \in R_G^+} {\C}_{\alpha|_{\kt_M}}.
\]
\end{lemma}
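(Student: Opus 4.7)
The plan is to split the problem along the decomposition $\kg/\kh = \km/\kt_M \oplus (\kn^- \oplus \kn^+)$ from \eqref{eq decomp gh}, treat the two summands separately, and match weight spaces to the two types of roots comprising $R_G^+$ as in \eqref{eq RG}. Since $J_{\kg/\kh}$ is defined as $J_{\km/\kt_M} \oplus J_\zeta$, this decomposition is compatible with the complex structure, so it suffices to identify the weights on each factor.

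For the first summand, I would use the standard fact that, as a complex $T_M$-representation, $\km/\kt_M = \bigoplus_{\alpha \in R_M^+} \km^{\C}_\alpha$ via $J_{\km/\kt_M}$; the characters appearing are exactly $\{\alpha|_{\kt_M} : \alpha \in R_M^+\}$, and under \eqref{eq RG} these are in bijection with the roots in $R_G^+$ of the first type (those vanishing on $\ka$).

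For the second summand, the key computation is to diagonalise $J_\zeta$ on $(\kn^- \oplus \kn^+)^{\C}$. First I would note that $(\kn^- \oplus \kn^+)^{\C} = \bigoplus_\alpha \kg^{\C}_\alpha$, where $\alpha$ ranges over roots of $(\kg^{\C}, \kh^{\C})$ with $\alpha|_\ka \neq 0$; moreover $\theta$ swaps $\kg^{\C}_\alpha$ and $\kg^{\C}_{\theta\alpha}$, with $(\theta\alpha)|_{\kt_M} = \alpha|_{\kt_M}$ and $(\theta\alpha)|_\ka = -\alpha|_\ka$. For $\alpha$ with $\alpha|_\ka \in \Sigma^+$ and $X \in \kg^{\C}_\alpha$, the eigenvalue $\langle \alpha, \zeta\rangle = \langle \alpha|_\ka, \zeta\rangle$ is positive, so $|\ad(\zeta)|^{-1}\ad(\zeta)X = X$ and hence $J_\zeta X = \theta X$; similarly $J_\zeta(\theta X) = -X$. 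A short calculation then shows that $X - i\theta X$ lies in the $+i$-eigenspace of $J_\zeta$ on $(\kn^- \oplus \kn^+)^{\C}$, and as $\alpha$ ranges over roots with $\alpha|_\ka \in \Sigma^+$, the vectors $X - i\theta X$ span this $+i$-eigenspace.

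Finally, the $T_M$-weight of $X - i\theta X$ is $\alpha|_{\kt_M}$, since both $X$ and $\theta X$ have weight $\alpha|_{\kt_M}$ under $\kt_M \subset \kk$. As a complex $T_M$-representation, $(\kn^- \oplus \kn^+, J_\zeta)$ is isomorphic to the $+i$-eigenspace, so it decomposes as $\bigoplus_{\alpha} \C_{\alpha|_{\kt_M}}$ with $\alpha$ running over roots of $(\kg^{\C}, \kh^{\C})$ with $\alpha|_\ka \in \Sigma^+$, i.e.\ exactly the roots in $R_G^+$ of the second type in \eqref{eq RG}. Adding the two contributions yields the claimed decomposition. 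The only subtlety, and the step most worth writing out carefully, is the identification of the $+i$-eigenspace of $J_\zeta$; the rest is bookkeeping with \eqref{eq RG}.
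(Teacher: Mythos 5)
Your argument is correct and essentially coincides with the paper's: both split along \eqref{eq decomp gh}, read off the weights on $\km/\kt_M$ from the definition of $J_{\km/\kt_M}$, and compute the weights on $\kn^- \oplus \kn^+$ from the root space decomposition of $\kg^{\C}$ under $\kh^{\C}$, together with $\theta \kg^{\C}_{\alpha} = \kg^{\C}_{-\bar\alpha}$ and the fact that $J_{\zeta}$ acts as $\pm\theta$ according to the sign of $\langle \alpha|_{\ka}, \zeta\rangle$, before matching the two kinds of roots via \eqref{eq RG}. The only cosmetic difference is that you diagonalise $J_{\zeta}$ on the complexification and identify the $+i$-eigenspace, whereas the paper stays inside the real space and uses the $J_{\zeta}$- and $\kt_M$-invariant complex lines $(\kg^{\C}_{\alpha} \oplus \kg^{\C}_{-\bar\alpha}) \cap \kg$; the resulting weight count is the same.
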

\begin{proof}
We have a complex decomposition
\[
\kg/{\kh} \cong \km/\kt_M \oplus \kn^- \oplus \kn^+ = \Bigl(\bigoplus_{\beta \in R_M^+}\km^{\C}_{\beta}\Bigr) \oplus \kn^- \oplus \kn^+.
\]
We will show that the set of weights of the action by $\kt_M$ on $\kn^- \oplus \kn^+$ is
\[
\{\alpha|_{\kt_M}; \alpha \in R_G, \alpha|_{\ka} \in \Sigma^+\} \subset i\kt_M^*.
\]
Furthermore, every weight occurs with multiplicity one. The claim then follows from \eqref{eq RG}.

 
 Let us determine the weights of the action by $\kt_M$ on $\kn^- \oplus \kn^+$. Fix $\lambda \in \Sigma^+$. Then
\[
\kg_{\lambda} = \Bigl( \bigoplus_{\alpha \in R_G, \alpha|_{\ka} = \lambda} \kg^{\C}_{\alpha}\Bigr) \cap \kg.
\]
For every $\alpha \in R_G$, we have
$
\theta \kg^{\C}_{\alpha} =\kg^{\C}_{-\bar \alpha}.
$
Since $\ad(\zeta)$ preserves $\kg^{\C}_{\alpha}$, this implies that
\[
J_{\zeta}\kg^{\C}_{\alpha} =\kg^{\C}_{-\bar \alpha}.
\]

Note that
\[
\kg_{-\lambda} =  \Bigl( \bigoplus_{\alpha \in R_G, \alpha|_{\ka} = \lambda} \kg^{\C}_{-\bar \alpha}\Bigr) \cap \kg.
\]
Here we used that $\alpha|_{\ka}$ takes values in $\R$, so $\alpha|_{\ka} = \bar \alpha|_{\ka}$.
So
\[
\kg_{\lambda} \oplus \kg_{-\lambda} = \Bigl( \bigoplus_{\alpha \in R_G, \alpha|_{\ka} = \lambda} \kg^{\C}_{\alpha}
\oplus  \kg^{\C}_{-\bar \alpha}\Bigr) \cap \kg,
\]
where every term on the right hand side is preserved by $J_{\zeta}$.

Let us determine the weight of $\kt_M$ on $\kg^{\C}_{\alpha} \oplus  \kg^{\C}_{-\bar \alpha}$, for $\alpha \in R_G$. For $Y \in \kt_M$, $X_{\alpha} \in \kg^{\C}_{\alpha}$ and $X_{-\bar \alpha} \in \kg^{\C}_{-\bar \alpha}$, we have
\[
\ad(Y)(X_{\alpha} + X_{-\bar \alpha}) = \langle \alpha, Y\rangle X_{\alpha} - \overline{\langle\alpha, Y\rangle}X_{-\bar\alpha}.
\] 
Since $\langle \alpha, Y\rangle \in i\R$, this equals
\[
\langle \alpha, Y\rangle (X_{\alpha} + X_{-\bar \alpha}).
\]
So $\kt_M$ acts on $\kg^{\C}_{\alpha} \oplus  \kg^{\C}_{-\bar \alpha}$ with weight $\alpha|_{\kt_M}$.

We therefore obtain a complex, $\kt_M$-equivariant isomorphism
\[
\begin{split}
\kn^+ \oplus \kn^- &= \bigoplus_{\lambda \in \Sigma^+} \kg_{\lambda} \oplus \kg_{-\lambda} \\
	&\cong \bigoplus_{\lambda \in \Sigma^+} \bigoplus_{\alpha \in R_G, \alpha|_{\ka} = \lambda} \C_{\alpha|_{\kt_M}} \\
	&= \bigoplus_{\alpha \in R_G, \alpha|_{\ka} \in \Sigma^+} \C_{\alpha|_{\kt_M}}.
\end{split}
\]
Here we used that the space $ (\kg^{\C}_{\alpha}
\oplus  \kg^{\C}_{-\bar \alpha}) \cap \kg$ is complex one-dimensional and preserved by $J_{\zeta}$ and $\kt_M$.
\end{proof}

Consider the spinor bundle
\[
\Bigwedge_{J^E}TE \to E
\]
of the $\Spinc$-structure defined by $J^E$. Let $L^E_{\det}$ be its determinant line bundle. Let $p\colon E \to K/H_M$ be the natural projection. Let $\chi_{\kn^- \oplus \kn^+}$ be the adjoint representation of $Z_M$ in the highest complex exterior power of $\kn^- \oplus \kn^+$.
\begin{lemma}\label{lem Ldet}
We have 
 an isomorphism of complex, $K$-equivariant line bundles
\[
L_{\det}^E \cong p^*(K\times_{H_M} \C_{2\rho^G|_{\kt_M}} \boxtimes \chi_{\kn^- \oplus \kn^+}).
\]
\end{lemma}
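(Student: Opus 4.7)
The plan is to show that $L_{\det}^E$ is a pullback under $p$ of a $K$-equivariant line bundle on $K/H_M$, and then to identify that line bundle as an associated bundle using Lemma \ref{lem gh complex}.

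First I would note that $p\colon E \to K/H_M$ is a $K$-equivariant vector bundle, and that the isomorphism \eqref{eq iso TE} identifies $T_{[k, X+Y]}E$ with $\kk/\kt_M \oplus \ks_M \oplus \kn$ in a manner that does not depend on the fiber coordinate $X+Y$; under this identification $J^E$ coincides with $J_{\kg/\kh}$ at every point of $E$. Consequently $(TE, J^E)$ is isomorphic, as a $K$-equivariant complex vector bundle, to $p^*(W, J_W)$, where $(W, J_W)$ is the $K$-equivariant complex vector bundle on $K/H_M$ whose fiber at the identity coset is the $H_M$-representation $(\kg/\kh, J_{\kg/\kh})$. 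Taking top complex exterior powers then gives
\[
L_{\det}^E \;\cong\; p^*\bigl(K \times_{H_M} \det\nolimits_{\C}(\kg/\kh)\bigr).
\]

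It then remains to identify $\det_{\C}(\kg/\kh)$ with $\C_{2\rho^G|_{\kt_M}} \boxtimes \chi_{\kn^- \oplus \kn^+}$ as an $H_M$-representation. For the action of $T_M$, Lemma \ref{lem gh complex} decomposes $\kg/\kh$ into weight spaces $\C_{\alpha|_{\kt_M}}$ for $\alpha \in R_G^+$, and summing these weights yields $2\rho^G|_{\kt_M}$. For the $Z_M$-action I would use the $J_{\kg/\kh}$-invariant decomposition $\kg/\kh = \km/\kt_M \oplus \kn^- \oplus \kn^+$: since $Z_M$ is central in $M$, $\Ad(Z_M)$ acts trivially on $\km$, and hence on $\det_{\C}(\km/\kt_M)$, while its action on $\det_{\C}(\kn^- \oplus \kn^+)$ equals $\chi_{\kn^- \oplus \kn^+}$ by definition. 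Combining the two contributions produces the desired $H_M$-representation.

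The main obstacle is verifying the pullback claim for $(TE, J^E)$, i.e.\ that neither the identification of $TE$ with $p^*(T(K/H_M) \oplus E)$ nor the almost complex structure $J^E$ depends nontrivially on the fiber coordinate of $E$. Both facts follow directly from \eqref{eq iso TE} and the definition of $J^E$ preceding it, but they must be stated precisely in order to reduce the computation of $L_{\det}^E$ to that of $\det_{\C}(\kg/\kh)$ as an $H_M$-representation.
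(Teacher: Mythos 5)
Your proposal is correct and follows essentially the same route as the paper's own (much terser) proof: reduce $L_{\det}^E$ to the pullback of the associated bundle for the $H_M$-representation $\det_{\C}(\kg/\kh)$, identify the $T_M$-weight as $2\rho^G|_{\kt_M}$ via Lemma \ref{lem gh complex}, and use that $Z_M$ acts trivially on $\km/\kt_M$ to get the factor $\chi_{\kn^- \oplus \kn^+}$. The extra care you take in checking that $(TE, J^E) \cong p^*\bigl(K \times_{H_M} (\kg/\kh, J_{\kg/\kh})\bigr)$ is left implicit in the paper, and is a valid (indeed welcome) elaboration rather than a different argument.
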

\begin{proof}
Lemma \ref{lem gh complex} implies that
\[
L_{\det}^E \cong p^*(K\times_{H_M} \C_{2\rho^G|_{\kt_M}} \boxtimes \chi),
\]
where $\chi$ is the representation of $Z_M$ in the highest complex exterior power of $\kg/\kh$. And $Z_M$ acts trivially on $\km/\kt_M$.
\end{proof}
\begin{remark}
Since $H_M = T_M Z_M'$ for a finite subgroup $Z_M'<Z_M$, we may replace $\chi_{\kn^- \oplus \kn^+}$ by the representation of $Z_M'$ in the highest complex exterior power of $\kn^- \oplus \kn^+$.
\end{remark}

\subsection{Line bundles}

\begin{lemma}\label{lem det GH}
The determinant line bundle of the $\Spinc$-structure on $G/H$ with spinor bundle \eqref{eq spinor GH} is
\[
G\times_H\C_{2(\rho^{G,M} + \lambda)} \boxtimes \chi_M^2 \otimes \chi_{\kn^- \oplus \kn^+}.
\]
\end{lemma}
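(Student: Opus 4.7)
The plan is to split $L_{\det} = \bigwedge_J^{\dim(G/H)/2}T(G/H) \otimes L_{\lambda-\rho^M,\chi_M}^{\otimes 2}$ and compute each factor separately. The second factor is $G\times_H \C_{2(\lambda-\rho^M)}\boxtimes \chi_M^2$ directly from the definition of $L_{\lambda-\rho^M,\chi_M}$, so the content of the lemma reduces to the identification
\[
\bigwedge_J^{\dim(G/H)/2}T(G/H) \;\cong\; G\times_H \C_{2\rho^G|_{\kt_M}}\boxtimes \chi_{\kn^-\oplus\kn^+}.
\]
Tensoring these and using $\rho^{G,M} = \rho^G|_{\kt_M} - \rho^M$ to combine the $\kt_M$-weights then yields the claimed formula.

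To establish the identification of the top exterior power, I would transfer the calculation to the partial linearisation $E$ via the $K$-equivariant diffeomorphism $\Psi\colon E\to G/H$ from Subsection \ref{sec Spinc lin}. By Subsection 4.3 of \cite{HSY1}, the almost complex structures $\Psi^*J$ and $J^E$ are $K$-equivariantly homotopic on $E$, so their top complex exterior powers agree as $K$-equivariant complex line bundles on $E$; Lemma \ref{lem Ldet} then identifies $\bigwedge^{\dim(E)/2}_{J^E} TE$ with $p^*(K\times_{H_M}\C_{2\rho^G|_{\kt_M}} \boxtimes \chi_{\kn^-\oplus\kn^+})$. The character of $H_M$ appearing here extends to a character of $H = H_M A$ by letting $A$ act trivially, and the standard pullback formula $\Psi^*(G\times_H V)\cong p^*(K\times_{H_M} V|_{H_M})$ for homogeneous line bundles whose $\ka$-action vanishes then recognises the right-hand side as $\Psi^*(G\times_H(\C_{2\rho^G|_{\kt_M}}\boxtimes \chi_{\kn^-\oplus\kn^+}))$. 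Since $\Psi$ is a diffeomorphism, this pushes back to the required isomorphism on $G/H$.

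The main obstacle is verifying the pullback formula for homogeneous line bundles under the partial linearisation. Since $J$ is only $K$-invariant, $T(G/H)$ is not a $G$-equivariant complex bundle, so the identification with $G\times_H(\cdots)$ can only be as $K$-equivariant complex line bundles. To check the formula I would localise on the slice $\{e\}\times(\ks_M\oplus\kn)\subset E$: trivialising $G\times_H V$ over the image $\exp(\ks_M)\exp(\kn)H\subset G/H$ using the section $gH\mapsto g$ for $g = \exp(X)\exp(Y)$, and using the identity $h\exp(X)\exp(Y) = \exp(\Ad(h)X)\exp(\Ad(h)Y)\,h$ for $h\in H_M$, one sees that the induced $H_M$-action on the fibre is exactly the restriction of the $H$-action, which is precisely the data of the $H_M$-equivariant trivial bundle underlying $p^*(K\times_{H_M}V|_{H_M})$.
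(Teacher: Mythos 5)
Your argument follows the paper's own proof: both split $L_{\det}$ into the top exterior power and $L_{\lambda-\rho^M,\chi_M}^{\otimes 2}$, transfer the former to $E$ using the $K$-equivariant homotopy between $\Psi^*J$ and $J^E$, invoke Lemma \ref{lem Ldet}, and then identify $\Psi_*p^*(K\times_{H_M}\C_{2\rho^G|_{\kt_M}}\boxtimes\chi_{\kn^-\oplus\kn^+})$ with $G\times_H(\C_{2\rho^G|_{\kt_M}}\boxtimes\chi_{\kn^-\oplus\kn^+})$. The only difference is that you spell out the last identification (which the paper leaves as ``one can check directly'') via the slice $\{e\}\times(\ks_M\oplus\kn)$ and the relation $h\exp(X)\exp(Y)=\exp(\Ad(h)X)\exp(\Ad(h)Y)h$ for $h\in H_M$, which is correct.
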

\begin{proof}
The almost complex structures $J$ and $\Psi_* J^E$ are $K$-equivariantly homotopic (see the proof of Lemma 4.3 in \cite{HSY1}). Hence the induced $\Spinc$-structures have equivariantly isomorphic determinant line bundles.
By Lemma \ref{lem Ldet}, the determinant line bundle corresponding to $\Psi_* J^E$ is
\[
\Psi_*p^*(K\times_{H_M} \C_{2\rho^G|_{\kt}}\otimes \chi_{\kn^- \oplus \kn^+}) \to G/H. 
\]
One can check directly that this line bundle is $K$-equivariantly isomorphic to $G\times_H\C_{2\rho^G|_{\kt}} \otimes \chi_{\kn^- \oplus \kn^+}$. Therefore, the determinant line bundle of the $\Spinc$-structure in the statement of the lemma is
\[
L_{\det} = G\times_H (\C_{2\rho^G|_{\kt_M}} \otimes \C_{\lambda - \rho^M}^{2} \boxtimes \chi_M^2\otimes \chi_{\kn^- \oplus \kn^+})= G\times_H\C_{2(\rho^{G,M} + \lambda)} \boxtimes \chi_M^2\otimes \chi_{\kn^- \oplus \kn^+}.
\]
\end{proof}

\begin{lemma}\label{lem line bdles}
Let $\sigma \in i\kt^*_M$ be integral. Let $\zeta \in i\ka^*$, and set
\[
L_{\zeta} := G\times_H \C_{\sigma + \zeta}.
\]
Then there is a $K$-equivariant isomorphism of line bundles $L_{\zeta} \cong G\times_H \C_{\sigma}$.
\end{lemma}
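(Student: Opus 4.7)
The plan is to exhibit an explicit $K$-equivariant trivialization factor. Concretely, I will construct a smooth function $f\colon G \to \C^*$ such that
\[
f(kg) = f(g) \text{ for all } k\in K, \qquad f(gh) = \C_\zeta(h)^{-1} f(g) \text{ for all } h\in H.
\]
Given such an $f$, the map
\[
L_\zeta \;\longrightarrow\; G\times_H \C_\sigma, \qquad [g,v] \longmapsto [g,\, f(g)\,v]
\]
is forced to be well-defined by the twisted $H$-equivariance of $f$; it is $K$-equivariant by the $K$-invariance of $f$, and fibrewise linear with smooth inverse $[g,w]\mapsto [g, f(g)^{-1} w]$, hence a $K$-equivariant line bundle isomorphism.

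To produce $f$, I would exploit the decomposition $G = KP$ with $P = MAN$ and $K\cap P = K_M$, together with the fact that $MA$ is a direct product (so $M$ and $A$ commute), $K_M \subset M$, and $H = H_M \cdot A$ with $H_M \subset M$ and $H_M \cap A = \{e\}$. For any $g\in G$, write $g = k\,m\,a\,n$ with $k\in K$, $m\in M$, $a\in A$, $n\in N$. The triple $(k,m,n)$ is not unique, but the $A$-component $a_P(g) := a$ is: replacing $k$ by $kk_M$ for $k_M \in K_M$ forces $man \mapsto k_M^{-1}man = (k_M^{-1}m)\,a\,n$, using that $k_M \in M$ commutes with $A$. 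So $a_P\colon G \to A$ is a well-defined smooth map (smoothness follows from local smooth sections of the principal $P$-bundle $G \to G/P \cong K/K_M$), and $K$-left-invariant by construction.

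Next I would verify the right $H$-equivariance. For $h = h_M a_0 \in H$, write $gh = k\,m\,a\,n\,h_M\,a_0$. Since $H_M \subset M$ commutes with $A$ and normalises $N$, one has $n\,h_M = h_M\,n'$ with $n' \in N$, and then $a\,h_M = h_M\,a$, so
\[
gh = k\,(m h_M)\,a\,(n' a_0) = k\,(m h_M)\,(a a_0)\,(a_0^{-1}n' a_0),
\]
which gives $a_P(gh) = a_P(g)\cdot a_0$. Setting $f(g) := e^{-\zeta(\log a_P(g))}$ then yields
\[
f(gh) = f(g)\,e^{-\zeta(\log a_0)} = f(g)\,\C_\zeta(a_0)^{-1} = f(g)\,\C_\zeta(h)^{-1},
\]
since $\C_\zeta$ is trivial on $H_M$ (as $\zeta$ is extended by zero on $\kt_M$). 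This gives $f$ with the required properties and completes the construction.

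The only real obstacle, a mild one, is establishing the uniqueness and smoothness of the $A$-component $a_P(g)$ under the non-unique decomposition $g = kman$; both points follow cleanly from $K \cap P = K_M$, the commutativity of $K_M$ with $A$ inside the Levi $MA$, and the fact that $G \to G/P = K/K_M$ is a smooth principal $P$-bundle. Everything else is bookkeeping.
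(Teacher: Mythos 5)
Your strategy is essentially the paper's: both arguments trivialise the $\zeta$-dependence by rescaling with the character $e^{\zeta}$ evaluated on the $A$-component of an Iwasawa-type decomposition. The paper uses the diffeomorphism $K\times\exp(\ks_M)\times N\times A\cong G$, so the $A$-component is well defined with no quotient ambiguity, and defines the map $\Xi(khna,z)=(khna,e^{\zeta}(a)z)$, checking directly that right multiplication by $T_M$ preserves the $K\exp(\ks_M)N$-part; you instead use $G=KMAN$ and remove the ambiguity via $K\cap P=K_M$ together with uniqueness in the Langlands decomposition of $P$. Your computations that $a_P(kg)=a_P(g)$ and $a_P(g\,h_M a_0)=a_P(g)a_0$ are correct, and the smoothness argument via local sections is fine; so the difference from the paper is only in how the decomposition is packaged.

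There is, however, a sign slip in the reduction to the trivialising function. With the associated-bundle convention used in the paper, namely $h\cdot(g,v)=(gh^{-1},c(h)v)$, equivalently $[g,v]=[gh,\,c(h)^{-1}v]$, the assignment $[g,v]\mapsto[g,f(g)v]$ descends to a map $G\times_H\C_{\sigma+\zeta}\to G\times_H\C_{\sigma}$ precisely when $f(gh)\,c_{\sigma+\zeta}(h)^{-1}=c_{\sigma}(h)^{-1}f(g)$, i.e.\ when $f(gh)=\C_{\zeta}(h)\,f(g)$, not $\C_{\zeta}(h)^{-1}f(g)$. Your function $f(g)=e^{-\zeta(\log a_P(g))}$ satisfies the opposite equivariance, so the map as written does not descend (it would instead produce an isomorphism onto $G\times_H\C_{\sigma+2\zeta}$). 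The repair is immediate: take $f(g)=e^{+\zeta(\log a_P(g))}$, which is exactly the factor $e^{\zeta}(a)$ in the paper's map $\Xi$; with that correction, and your verified transformation law for $a_P$, the rest of your argument goes through.
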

\begin{proof}
The multiplication map defines a diffeomorphism
\[
K \times \exp(\ks_M) \times N \times A  \cong G.
\]
Define the map
\[
\Xi\colon G\times \C_{\sigma + \zeta} \to G\times \C_{\sigma } 
\]
by
\[
\Xi(khna, z) = (khna, e^{\zeta}(a)z),
\]
for $k \in K$, $h \in \exp(\ks_M)$, $n \in N$, $a \in A$, and $z \in \C_{\sigma + \zeta}$. We claim that this is map is $H$-equivariant, and that the induced map $L_{\zeta} \to G\times_H \C_{\sigma}$ is
a $K$-equivariant isomorphism of line bundles.

To show that $\Xi$ is $H$-equivariant, let $t \in T_M$ and $a_0 \in A$. Then for an element $(khna, z) \in G\times \C_{\sigma + \zeta}$  as above,
\[
(ta_0) \cdot (khna, z) = \bigl(khna(ta_0)^{-1}, e^{\sigma}(t)e^{\zeta}(a_0)z \bigr) = 
\bigl((khnt^{-1})aa_0^{-1}, e^{\sigma}(t)e^{\zeta}(a_0)z\bigr).
\]
The adjoint action by $T_M$ preserves the restricted root spaces of the system $\Sigma = \Sigma(\kg, \ka)$, because this action commutes with $\ad(\ka)$. So this action preserves $\kn$. Furthermore, since $T_M \subset K_M$, this action also preserves $\ks_M$. So if $h = \exp(X)$ and $n = \exp(Y)$, then
\[
khnt^{-1} = kt^{-1}\exp(\Ad(t)X)\exp(\Ad(t)Y)
\in  K\exp(\ks_M)N,
\]
Therefore,
\[
\begin{split}
\Xi\bigl((khnt^{-1})aa_0^{-1}, e^{\sigma}(t)e^{\zeta}(a_0)z\bigr) &= \bigl((khnt^{-1})aa_0^{-1}, e^{\zeta}(aa_0^{-1}) e^{\sigma}(t)e^{\zeta}(a_0)z\bigr) \\
&= \bigl(khna(ta_0)^{-1},   e^{\sigma}(t)e^{\zeta}(a)z\bigr) \\
&= (ta_0) \cdot \Xi(khna, z).
\end{split}
\]
%

Since $\Xi$ is $H$-equivariant, it indeed descends to a map $L_{\zeta} \to G\times_H \C_{\sigma}$. This map is immediately seen to be a $K$-equivariant isomorphism of line bundles.
\end{proof}

\begin{remark} \label{rem preq line}
Lemmas \ref{lem det GH} and \ref{lem line bdles} imply that the determinant line bundle of the $K$-equivariant $\Spinc$-structure with 
 spinor bundle \eqref{eq spinor GH} is $K$-equivariantly isomorphic to
\[
G\times_H\C_{2(\xi + \zeta)} \boxtimes \chi_M^2 \otimes \chi_{\kn^- \oplus \kn^+} \to G/H,
\]
with $\xi$ and $\zeta$ as in Proposition \ref{prop moment map}. That is to say, modulo the representation $\chi_M \otimes \chi_{\kn^- \oplus \kn^+}$ of the finite group $Z'_M$, the map $\Phi$ is the symplectic moment map for the action by $K$ on the coadjoint orbit $\Ad^*(G)(\xi+\zeta)$, while the $\Spinc$-structure with spinor bundle \eqref{eq spinor GH} is a $K$-equivariant $\Spinc$-prequantisation of this coadjoint orbit. If the infinitesimal character $\chi$ of $\pi$ is regular, then we may take $\zeta$ to be the component of $\chi$ in $\ka^*$, so that $\xi+\zeta = \chi+ \rho^{G,M}$.
\end{remark}

\subsection{Proof of Proposition \ref{prop moment map}}

We start with a general, well-known, comment about moment maps on homogeneous spaces. For now, let $G$ be any Lie group, and let $H<G$ be a possibly disconnected, closed subgroup. Let 
 $\C_{\sigma}$ be a one-dimensional unitary representation of $H$, with differential $\sigma \in i\kh^*$. Consider the line bundle
\[
L_{\sigma} := G\times_H \C_{\sigma} \to G/H.
\]
Then $\Gamma^{\infty}(L_{\sigma}) \cong (C^{\infty}(G) \otimes \C_{\sigma})^H$.
Let $V \subset \kg$ be a $H$-invariant subspace such that $\kg = \kh \oplus V$. Extend $\sigma$ linearly to $\kg$ by setting it equal to zero on $V$. 
\begin{lemma}\label{lem Spinc conn}
For $X \in \kg$ and $s \in (C^{\infty}(G) \otimes \C_{\sigma})^H$, set
\[
(\nabla_{X+\kh}s)(e) := {\calL}_{-X}(s)(e) - \langle \sigma, X\rangle s(e).
\]  
Here ${\calL}$ is the left regular representation.
This extends to a well-defined $G$-invariant connection on $L_{\sigma}$. The associated moment map $\Phi^{\sigma}\colon G/H \to \kg^*$ is given by
\[
\Phi^{\sigma}(gH) = \Ad^*(g)\sigma/2i.
\]
\end{lemma}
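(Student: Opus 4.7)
The plan is to handle the three claims in sequence: well-definedness of the formula at $e$, extension to a $G$-invariant connection on $G/H$, and identification of the moment map.

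For well-definedness, I would realise a section $s$ as a function $\wt s\colon G \to \C$ that is $H$-equivariant in the appropriate sense, and check that the right-hand side of the defining formula vanishes whenever $X \in \kh$. The quantity $\calL_{-X}\wt s(e)$ is $\ddt \wt s(\exp(tX))$, which for $X\in\kh$ equals $\langle \sigma, X\rangle\wt s(e)$ by the $H$-equivariance of $\wt s$; the two terms on the right-hand side therefore cancel, so the formula descends to a linear map on $\kg/\kh \cong T_{eH}(G/H)$. I would then extend $\nabla$ from $eH$ to all of $G/H$ by $G$-invariance: for $V\in T_{gH}(G/H)$ and $s \in \Gamma^{\infty}(L_\sigma)$, declare $(\nabla_V s)(gH) := g\cdot (\nabla_{g^{-1}\cdot V}(g^{-1}\cdot s))(eH)$. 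The Leibniz rule in $s$ is inherited at $e$ from the Leibniz rule for $\calL_{-X}$ as a first-order differential operator, and then propagates throughout $G/H$ by $G$-equivariance of both sides.

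For the moment map, I would substitute the formula into the defining relation $2i\langle \Phi^{\sigma}, X\rangle s = \calL_X s - \nabla_{X^{G/H}}s$ at $eH$. Under the paper's sign convention for the fundamental vector field, $X^{G/H}(eH) = -X + \kh$, and the Lie derivative $\calL_X s(eH)$ of a section of $L_\sigma$ can be written in $\wt s$-form purely in terms of $(d\wt s)_e(X)$. Substituting this together with the formula for $\nabla_{-X+\kh}\wt s(e)$, the derivative terms cancel, and one is left with a multiplication by $\langle \sigma, X\rangle/(2i)$, giving $\Phi^\sigma(eH) = \sigma/(2i)$. The general formula $\Phi^\sigma(gH) = \Ad^*(g)\sigma/(2i)$ then follows from the $G$-equivariance of the moment map, which is itself a consequence of the $G$-invariance of both $\nabla$ and $L_\sigma$.

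There is no real conceptual obstacle here; the only subtlety is keeping consistent track of the sign conventions for the left regular action $\calL$, for the paper's fundamental vector field $X^M(m) = \ddt \exp(-tX)m$, and for the coadjoint action $\Ad^*$.
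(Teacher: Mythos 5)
Your proposal is correct and follows essentially the same route as the paper: well-definedness is checked at the identity by showing the two terms cancel for $X \in \kh$ via the $H$-equivariance of the section (the paper phrases this through the right regular representation, which agrees with $\calL_{-X}$ at $e$), the connection is extended by $G$-invariance, and the moment map is computed at $eH$ using $X^{G/H}_{eH} = -X + \kh$ and then propagated by $G$-equivariance. The only care needed, as you note, is consistency of the sign conventions for $\calL$, $X^{G/H}$ and the $H$-equivariance of sections, which is exactly where the paper's own computation is carried out.
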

\begin{proof}
To see that $\nabla$ is well-defined, note that if  $s \in (C^{\infty}(G) \otimes \C_{\sigma})^H$ and $X \in \kh$, then 
$
\calR_{X}s = \langle \sigma, X\rangle s,
$
with $\calR$ the right regular representation.
So, at $e$,
\[
{\calL}_{-X}(s)(e) - \langle \sigma, X\rangle s(e) = \calR_X(s)(e) - \langle \sigma, X\rangle s(e) = 0.
\] 

The moment map $\Phi^{\sigma}$ satisfies
\[
2i \langle \Phi^{\sigma},X\rangle s(e)= ({\calL}_X - \nabla_{X^{G/H}})s(e) 
\]
Now note that with respect to the identification $T_{eH}G/H = \kg/\kh$,
\[
X^{G/H}_{eH} = \ddt \exp(-tX)H = -X+\kh.
\]
Hence
\[
 ({\calL}_X - \nabla_{X^{G/H}})s(e) =  ({\calL}_X - \nabla_{-X+\kh})s(e) = \langle\sigma, X \rangle s(e).
\]
So $\Phi^{\sigma}(e) = \sigma/2i$, and the claim about $\Phi^{\sigma}$ follows by $G$-equivariance.
\end{proof}

Importantly, even if $H$ is disconnected, so the representation $\C_{\sigma}$ of $H$ is not determined by $\sigma$, Lemma \ref{lem Spinc conn} still gives us a connection with the desired moment map. This means we can apply it to the representation $\C_{2\rho^G - 2\rho^M +2\lambda + i\zeta}  \boxtimes \chi_M^2 \otimes \chi_{\kn^- \otimes \kn^+}$ of the Cartan subgroup $H$.


\medskip \noindent
\emph{Proof of Proposition \ref{prop moment map}.}
By Lemmas \ref{lem det GH} and \ref{lem line bdles}, we have $K$-equivariant isomorphisms of line bundles
\[
L_{\det} \cong G\times_H \C_{2(\rho^{G,M} +\lambda)} \boxtimes \chi_M^2\otimes \chi_{\kn^- \otimes \kn^+}\cong G\times_H \C_{2(\rho^{G,M} +\lambda+i\zeta)} \boxtimes \chi_M^2\otimes \chi_{\kn^- \otimes \kn^+}.
\]
Let $\nabla$ be the connection of Lemma \ref{lem Spinc conn} on the line bundle on the right hand side;  we use the same notation for the connection on $L_{\det}$ corresponding to $\nabla$ via the above isomorphism. The moment map for the action by $K$ on $G/H$ associated to $\nabla$ is the map $\Phi^{2(\rho^{G,M} +\lambda+i\zeta)}$ in Lemma \ref{lem Spinc conn}, composed with restriction to $\kk$. This is precisely the map $\Phi$ in Proposition \ref{prop moment map}.
\hfill $\square$

\begin{remark}
In the proof of Proposition \ref{prop moment map}, we used Lemma \ref{lem line bdles} to replace $2(\rho^{G,M} +\lambda)$ by $2(\rho^{G,M} +\lambda+i\zeta)$. The reason for introducing the extra term  in Proposition \ref{prop moment map} is that the moment map $\Phi$ is taming 
if $\rho^{G,M} +\lambda+i\zeta$ is regular.
\end{remark}

\subsection{The singular case} \label{sec sing}


If $\xi + \zeta$ is singular, then the moment map of Proposition \ref{prop moment map} is not necessarily proper or taming. But then we can still find a proper, taming $\Spinc$-moment map such that the associated index equals $(-1)^{\dim(M/K_M)}\pi|_K$.


Consider a general setting, where $M$ is a complete Riemannian manifold with an action by a compact Lie group $K$, and $\Phi\colon M\to \kk$ is the moment map for a connection $\nabla$ on a line bundle (defined as in \eqref{eq def moment}), and $\psi\colon M \to \kk$ is a taming map. For $\tau \in C^{\infty}(M)^K$, define the connection
\[
\nabla^{\tau} := \nabla + 2i{\tau}(v^{\psi}, \relbar).
\]
Let $\Phi^{\tau}$ be the associated moment map.
\begin{lemma}\label{lem taming moment}
For ${\tau}$ large enough, the map $\Phi^{\tau}$ is proper, taming, and homotopic to $\psi$ as taming maps.
\end{lemma}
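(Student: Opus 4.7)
The plan is to compute $\Phi^\tau$ explicitly and then to balance the three requirements by choosing $\tau$ to grow sufficiently fast outside a fixed neighbourhood of $\Zeroes(v^\psi)$. From the definition of $\nabla^\tau$ and the moment map equation \eqref{eq def moment}, one obtains (up to sign, as in \eqref{eq mu tau GH})
\[
\Phi^\tau = \Phi + \tau \tilde\psi,
\]
where $\tilde\psi\colon M \to \kk^*$ is defined by $\langle \tilde\psi(m), X\rangle = (v^\psi(m), X^M(m))$ for all $X \in \kk$. Two structural observations will drive the argument. First, since $X^M(m) = 0$ for $X \in \kk_m$, the value $\tilde\psi(m)$ lies in $\kk_m^\perp$ (identifying $\kk^* \cong \kk$ via the Ad-invariant inner product). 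Second, since $v^\psi(m)=\psi(m)^M(m)$ is tangent to the $K$-orbit through $m$, the vanishing $\tilde\psi(m)=0$ is equivalent to $v^\psi(m)\perp T_m(K\cdot m)$, which forces $v^\psi(m)=0$. Thus $\Zeroes(\tilde\psi) = \Zeroes(v^\psi)$ is compact, and $\|\tilde\psi\|$ is bounded below away from this set.

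I would fix a compact, $K$-invariant neighbourhood $U$ of $\Zeroes(v^\psi)$ and choose $\tau \in C^\infty(M)^K$ non-negative, vanishing on a smaller such neighbourhood and satisfying
\[
\tau(m)\|\tilde\psi(m)\| \geq 1 + 2\|\Phi(m)\| + 2\|\psi(m)\| + d(m, m_0) \qquad \text{on } M\setminus U,
\]
for a fixed basepoint $m_0$; such a $\tau$ exists by paracompactness since $\tilde\psi$ is nowhere zero on $M\setminus U$. Properness of $\Phi^\tau$ then follows from $\|\Phi^\tau(m)\| \geq \tau(m)\|\tilde\psi(m)\| - \|\Phi(m)\|$, which forces $\|\Phi^\tau(m)\|\to\infty$ as $m\to\infty$ on the complete manifold $M$. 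For taming, $v^{\Phi^\tau}(m)=0$ is equivalent to $\Phi^\tau(m) \in \kk_m$; since $\tilde\psi(m)\in\kk_m^\perp$, the orthogonal projection reduces this to $\Phi_\perp(m) + \tau(m)\tilde\psi(m)=0$ in $\kk_m^\perp$, which the growth estimate rules out off $U$, so $\Zeroes(v^{\Phi^\tau})\subset U$ is compact.

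To construct the homotopy, pick a smooth cut-off $\chi\colon [0, 1] \to [0, 1]$ with $\chi \equiv 0$ on $[0, 1/3]$ and $\chi \equiv 1$ on $[2/3, 1]$, and set
\[
F(m, t) := (1 - \chi(t))\Phi^\tau(m) + \chi(t)\psi(m),
\]
so $F(\cdot, t) = \Phi^\tau$ for $t \in [0, 1/3]$ and $F(\cdot, t) = \psi$ for $t \in [2/3, 1]$, as required by the definition of homotopy of taming maps. The vanishing of $v^{F(\cdot,t)}(m)$, projected to $\kk_m^\perp$, reads
\[
(1 - \chi(t))\tau(m)\tilde\psi(m) + \chi(t)\psi_\perp(m) + (1 - \chi(t))\Phi_\perp(m) = 0.
\]
When $1-\chi(t)\geq 1/2$, the growth condition on $\tau$ forces the first term to dominate the sum of the norms of the other two off $U$, so there are no solutions there. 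When $\chi(t) > 1/2$, the dominant term is $\chi(t)\psi_\perp(m)$; the taming of $\psi$ together with continuity in $t$ then confines solutions to a neighbourhood of $\Zeroes(v^\psi)\times\{1\}\subset U\times[0,1]$. Hence $\Zeroes(v^{F(\cdot,\relbar)})$ is compact in $M\times[0,1]$.

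The hard part will be verifying the taming property of the homotopy uniformly for all $t\in[0,1]$. The three terms above all lie in the same subspace $\kk_m^\perp$, so a priori they could cancel outside $U$; ruling this out relies on both the coincidence $\Zeroes(\psi_\perp) = \Zeroes(\tilde\psi)$ and the freedom to grow $\tau$ as rapidly as needed, so the quantitative estimate on $\tau$ is the crux of the proof.
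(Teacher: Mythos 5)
Your reformulation of $\Phi^{\tau}=\Phi+\tau\tilde\psi$, and your arguments for taming and for properness, are correct, and they deviate somewhat from the paper: the paper gets taming from the same kind of estimate (with $w^{\psi}=\sum_j(v^{\psi},X_j^M)X_j^M$ in place of your $\tilde\psi$), but obtains properness by citing Proposition 5.1 of \cite{HS16} (adding a further function to $\tau$), and builds the homotopy in two stages: first a linear homotopy from $\Phi^{\tau}$ to $\Phi^{\tau}-\Phi$, whose vector fields $tv^{\Phi}+\tau w^{\psi}$ are nonzero off $U$ by the same estimate, and then from $\Phi^{\tau}-\Phi$ to $\psi$ using the pointwise nonnegativity $(v^{\Phi^{\tau}-\Phi},v^{\psi})=\tau\sum_j(v^{\psi},X_j^M)^2\geq 0$ together with Corollary 3.5 of \cite{HSY1}. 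Your direct growth condition on $\tau$ is a legitimate, arguably more self-contained, way to get properness.

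The genuine gap is in your one-step homotopy, exactly at the place you yourself flag as the hard part. For $\chi(t)>1/2$ you assert that $\chi(t)\psi_{\perp}(m)$ is the dominant term and that ``continuity in $t$'' confines zeros to a neighbourhood of $\Zeroes(v^{\psi})\times\{1\}$. Neither claim is justified: taming of $\psi$ gives no lower bound on $\|\psi_{\perp}\|$ at infinity, whereas $(1-\chi(t))\tau\|\tilde\psi\|$ is enormous by your own growth condition whenever $\chi(t)<1$, so in the regime $1/2<\chi(t)<1$ the term you discard typically dominates the one you keep; and continuity in $t$ cannot by itself exclude zeros escaping to infinity at intermediate times, which is precisely what compactness of the zero set in $M\times[0,1]$ demands. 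The missing idea is the sign relation $(\tilde\psi(m),\psi(m))=\|v^{\psi}(m)\|^2\geq 0$ (the same positivity the paper exploits): since $\tilde\psi(m)\in\kk_m^{\perp}$, pairing your equation $(1-\chi)\bigl(\Phi_{\perp}+\tau\tilde\psi\bigr)+\chi\,\psi_{\perp}=0$ with $\tilde\psi(m)$ gives $(1-\chi)\tau\|\tilde\psi\|^2+\chi\|v^{\psi}\|^2\leq(1-\chi)\,\|\Phi\|\,\|\tilde\psi\|$, and off $U$ your growth condition makes the left-hand side strictly larger than the right-hand side for every $t\in[0,1]$ (for $\chi(t)<1$ because $\tau\|\tilde\psi\|>\|\Phi\|$, and for $\chi(t)=1$ because $v^{\psi}\neq 0$ there). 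With this one observation inserted, your linear homotopy does work; without it, the argument for intermediate $t$ is not a proof.
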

\begin{proof}
Let $\{X_1, \ldots, X_n\}$ be an orthonormal basis of $\kk$. Then
\[
v^{\Phi^{\tau}} = v^{\Phi} + {\tau} w^{\psi},
\]
where
\[w^{\psi} :=
\sum_{j=1}^n (v^{\psi}, X_j^M)X_j^M.
\]
Let $m \in M$, and suppose $v^{\psi}(m) \not=0$. The definition of $w^{\psi}$ is independent of the basis of $\kk$, so we may suppose that $X_1, \ldots, X_d \in \kk_m$ and $X_{d+1}, \ldots, X_n \in \kk_m^{\perp}$. 
Since $\{X_{d+1}^M(m), \ldots, X_n^M(m)\}$ is a basis of the subspace $T_m(K\cdot m) \subset T_mM$ containing $v^{\psi}(m)$, we have that
\[
w^{\psi}(m) = 
\Bigl(\sum_{j=1}^n (v^{\psi}, X_j^M)X_j^M\Bigr)(m) =  \Bigl(\sum_{j=d+1}^n (v^{\psi}, X_j^M)X_j^M\Bigr)(m) \not= 0.
\] 
So $w^{\psi}$ vanishes exactly at the points where $v^{\psi}$ vanishes.

Now note that
\[
\|v^{\Phi^{\tau}}\| \geq {\tau}\|w^{\psi}\| - \|v^{\Phi}\|.
\]
Let $U$ be a relatively compact, $K$-invariant neighbourhood of the vanishing set of $v^{\psi}$. Choose ${\tau}$ so that, outside $U$, ${\tau}\|w^{\psi}\| > \|v^{\Phi}\|$. Then $v^{\Phi^{\tau}}$ does not vanish outside $U$.

To show that $\Phi^{\tau}$ is homotopic to $\psi$, first note that by the previous arguments, the vector field
\[
tv^{\Phi} + \tau w^{\psi}
\]
is nonzero outside $U$, for all $t \in [0,1]$. Hence $\Phi^{\tau}$ is homotopic to the taming map $\Phi^{\tau} - \Phi$. And
\[
(v^{\Phi^{\tau} - \Phi}, v^{\psi}) = (\tau w^{\psi}, v^{\psi}) = {\tau} \sum_{j=1}^n (v^{\psi}, X_j^M)^2.
\]
If ${\tau}\geq 0$, then this is nonnegative. This implies that $\Phi^{\tau} - \Phi$ is homotopic to $\psi$ (this is elementary, see for example Corollary 3.5 in \cite{HSY1}).

Finally, by adding a function $\theta$ as in Subsection 5.1 of \cite{HS16} to $\tau$, we can ensure that the resulting moment map is proper, as well as taming, and homotopic to $\psi$.
\end{proof}


\begin{remark}
In 
Proposition 5.1 in \cite{HS16}, it was shown how to replace a taming moment map by a proper, taming moment map, without changing the corresponding indices. (This was the last step in the proof of Lemma \ref{lem taming moment}.) In Lemma \ref{lem taming moment}, we show how to replace any taming map by a proper, taming moment map without changing the index. The additional step here is to replace any taming map by a taming moment map that is homotopic to it.
\end{remark}

\section{Multiplicity-free restrictions} \label{sec mult free}


Throughout this section, $K<G$ is a compact subgroup satisfying the condition of Corollary \ref{cor mult form K'}. That is, the map $\Phi_K$ in \eqref{eq Phi K} is proper. In particular, what follows is true if $K$ is a maximal compact subgroup, as we will assume from Subsection \ref{sec SL2} onwards. We will omit the subscript $K$ from $\Phi_K$, and write $\Phi:= \Phi_K$ from now on. We will also write $(G/H)_{\xi} := (G/H)^K_{\xi}$ for $\xi \in \kt^*$.


Recall that if $\xi \in \kt_M^*$, defined in \eqref{eq def xi}, is regular for the roots of $(\km^{\C}, \kt_M^{\C})$, then $\Phi$ is simply the projection of the coadjoint orbit $\Ad^*(G)(\xi + \zeta)$ onto $\kk^*$:
\beq{eq Phi reg}
\Phi(gH) = (\Ad^*(g)(\xi + \zeta))|_{\kk}
\eeq
If $\xi$ is singular, then $\Phi$ is as in \eqref{eq mu tau GH}, with $\tau$ as in
Lemma \ref{lem taming moment}.

\subsection{Reduced spaces that are points}


In the setting of Corollary \ref{cor mult form K'}, we obtain multiplicities equal to $0$ or $1$ if 
 the reduced space $(G/H)_{(\eta+\rho^K)/i}$ is a single point. Indeed,  
 the orbifold index on $(G/H)_{(\eta+\rho^K)/i}$ then lies in $\{-1,0,1\}$. It takes only these values, because, up to a sign, it is the dimension of the trivial part of a one-dimensional representation of a finite group.
We can make this more explicit using the expression  (5.34) in \cite{Paradan14} for indices on reduced spaces that are points.

%

Let $C \subset \kt^*$ be the open positive Weyl chamber. Set $Y := \Phi^{-1}(C)$. 
Set $\Phi_Y := \Phi|_Y - \rho^K$.
Let $\delta \in \hat K$ have highest weight $\eta$.
 Then
\[
(G/H)_{(\eta + \rho^K)/i} = Y_{\eta/i} := \Phi_Y^{-1}(\eta/i)/T.
\]
Let  $\kt_Y \subset \kt$ be the generic stabiliser of the infinitesimal action by $\kt$ on $Y$.  Let $I(Y) \subset \kt^*$ be the affine space parallel to the annihilator of $\kt_Y$, containing the image of $\Phi_Y$. Let $T_Y < T$ be the subtorus with Lie algebra $\kt_Y$ (note that this subgroup is connected). Fix $g_0H \in \Phi_Y^{-1}(\eta/i)$, and let $\Gamma < T/T_Y$ be its stabiliser. This is a finite group.
\begin{corollary}\label{cor one 2}
\begin{enumerate}
\item[(a)]
Suppose that $\eta/i$ is a regular value of $\Phi_Y\colon Y \to I(Y)$, and that 
%
$\Phi^{-1}(\eta/i+ \rho^K)/K$ is a point. 
%
%
Then
\[
[\pi|_K:\delta] = \left\{
\begin{array}{ll}
1 & \text{if $\Gamma$ acts trivially on $\C_{\lambda  - \eta - \rho^M} \boxtimes \chi_M$};\\
0 & otherwise.
\end{array}
\right.
\]
\item[(b)]
If $\eta/i$  is not necessarily a regular value of $\Phi_Y$, but 
%
$\Phi^{-1}(\eta/i + \rho^K + \varepsilon)/K$ is a point
for all $\varepsilon \in I(Y)$ close enough to $0$, then we still have
\[
[\pi|_K:\delta] \in \{0,1\}.
\]
\end{enumerate}
\end{corollary}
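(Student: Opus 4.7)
The plan is to combine Corollary \ref{cor mult form K'} with an explicit evaluation of the orbifold $\Spinc$-Dirac index on a zero-dimensional reduced space. Corollary \ref{cor mult form K'} gives $[\pi|_K:\delta] = (-1)^{\dim(M/K_M)/2} \indx((G/H)_{(\eta+\rho^K)/i})$, so the task reduces to computing the orbifold index when the reduced space is, or (in the singular case) can be replaced by, a single point-orbifold. The $\Spinc$-Dirac index on a $0$-dimensional orbifold $\{\mathrm{pt}\}/\Gamma$ is simply the dimension of the $\Gamma$-invariants in the (one-dimensional complex) fiber of the reduced spinor bundle, hence lies in $\{0,1\}$. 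This immediately yields $[\pi|_K:\delta] \in \{0, \pm 1\}$, and non-negativity of multiplicities forces $[\pi|_K:\delta] \in \{0,1\}$. This handles the containment in $\{0,1\}$ in part (a), and all of part (b), where the definition of the index at a singular value (Definition \ref{def quant red}, Theorem \ref{thm indx red}) is the orbifold index at any sufficiently close regular value.

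For the more precise dichotomy in part (a), what remains is to identify the one-dimensional $\Gamma$-representation $(\calS^{\eta/i}_{Y,\eta/i})_{[g_0H]}$. For this I would apply Lemma \ref{lem Spinc red} at the fiber over $[g_0 H]$, yielding the $\Gamma$-equivariant identification
\[
(\calS^{\eta/i}_{Y,\eta/i})_{[g_0H]} \otimes \Bigwedge_{\C} \kk/\kt \otimes \Bigwedge_{\C}(\kt/\kt_Y \otimes_{\R} \C) \otimes \C_{\eta/i} \cong \bigl(\Bigwedge_{J} T(G/H) \otimes L_{\lambda-\rho^M, \chi_M}\bigr)_{g_0H}.
\]
The right-hand side decomposes via Lemma \ref{lem gh complex} into a tensor product of one-dimensional $T_M$-characters indexed by $R_G^+$; on the left, $\Gamma \subset T/T_Y$ acts trivially on $\kt/\kt_Y$, so that factor contributes only a dimension. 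Comparing $\Gamma$-characters, cancelling the compact-root contributions in $\Bigwedge_{\C}\kk/\kt$ against the corresponding factors on the right, and collecting the shift $\rho^G|_{\kt_M} - \rho^M = \rho^{G,M}$ that enters through $\xi = (\lambda+\rho^{G,M})/i$, I would arrive at $(\calS^{\eta/i}_{Y,\eta/i})_{[g_0H]} \cong \C_{\lambda-\eta-\rho^M}\boxtimes \chi_M$ as $\Gamma$-representations. This is essentially formula (5.34) of \cite{Paradan14} adapted to the present setting.

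The hard part is the character bookkeeping in this last identification: splitting $R_G^+$ via \eqref{eq RG} into the parts with $\alpha|_{\ka} = 0$ (which match $R_M^+$) and those with $\alpha|_{\ka}\in \Sigma^+$, matching these against the roots appearing in $\Bigwedge_{\C}\kk/\kt$, identifying the restriction of the $H$-character $\C_{\lambda-\rho^M}\boxtimes\chi_M$ of the line bundle $L_{\lambda-\rho^M,\chi_M}$ to the stabilizer $\Gamma$ via the translation by $g_0^{-1}$, and confirming that the sign $(-1)^{\dim(M/K_M)/2}$ of Corollary \ref{cor mult form K'} is compensated by the orientation convention on the reduced $\Spinc$-structure so that the resulting multiplicity is non-negative. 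Both are tracking issues of the kind done in \cite{HSY1} and \cite{Paradan14}, but they occupy the bulk of the proof.
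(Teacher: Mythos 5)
Your proposal follows essentially the same route as the paper: Corollary \ref{cor mult form K'} combined with Paradan--Vergne's evaluation (5.33)--(5.34) of the index on a point reduced space as the dimension of the $\Gamma$-invariants of a one-dimensional representation, which is then identified via a character comparison of the spinor fiber at $g_0H$ with $\C_{\lambda-\eta-\rho^M}\boxtimes\chi_M$. The only (cosmetic) difference is that the paper skips the explicit root bookkeeping you outline, comparing the two Clifford-module structures at $g_0H$ directly and concluding only that $V_\eta$ is this character \emph{or its dual}, which suffices since triviality of the $\Gamma$-action is insensitive to dualising.
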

\begin{proof}
First of all, note that for any $\sigma \in \kt^*$, by construction $\Phi^{-1}(\sigma+\rho^K)$ is a single $K$-orbit if and only if $\Phi_Y^{-1}(\xi)$ is a single $T$-orbit.

We have
\[
T(G/H)|_{Y} = TY \oplus (Y \times \kk/\kt).
\]
By the two-out-of-three lemma, we have a spinor bundle
 $\calS_{Y,\eta} \to Y$ such that
\beq{eq def SY}
\bigl(\Bigwedge_J T(G/H) \otimes L_{\lambda - \rho^M, \chi_M}\otimes \C_{-\eta}\bigr)|_Y = \calS_{Y,\eta} \otimes \Bigwedge_{\C} \kk/\kt.
\eeq
Here the complex structure on $\kk/\kt$ is the one defined by the positive compact roots. Let $V_{\eta}$ be the one-dimensional representation of $\Gamma$ such that, as representations of $\Gamma$,
\beq{eq def V}
(\calS_{Y, \eta})_{g_0H} = \Bigwedge_{\C} T_{g_0H}Y \otimes V_{\eta},
\eeq
for some $\Gamma$-invariant complex structure on $T_{g_0H}Y$. This $V_{\eta}$ exists since $(\calS_{Y, \eta})_{g_0H} $ and $\Bigwedge_{\C} T_{g_0H}Y$ are irreducible, $\Gamma$-equivariant modules over the Clifford algebra of $T_{g_0H}Y$; see also (5.33) in  \cite{Paradan14}.
Then (5.34) in \cite{Paradan14} states that
\beq{eq indx red pt}
\indx((G/H)_{(\eta+\rho^K)/i}) = \indx (Y_{\eta}) =  \dim V_{\eta}^{\Gamma}.
\eeq

Now by \eqref{eq def SY} and \eqref{eq def V},
\[
\Bigwedge_{J_{g_0H}} (T_{g_0H}G/H)\otimes \C_{\lambda - \eta -\rho^M} \otimes \chi_M  =
\Bigwedge_{\C} (T_{g_0H}G/H)\otimes V_{\eta}.
\]
Here on the right hand side, the complex structure on $T_{g_0H}G/H$ is defined by the complex structures on $ T_{g_0H}Y$ and $\kk/\kt$ via the isomorphism $T_{g_0H}G/H \cong T_{g_0H}Y \oplus \kk/\kt$. This may be a different complex structure from $J_{g_0H}$. We conclude that $V_{\eta}$ equals $\C_{\lambda- \eta - \rho^M } \otimes \chi_M$ or its dual. So the claim follows from \eqref{eq indx red pt} and Corollary \ref{cor mult form K'}.
%
%
%
\end{proof}
\begin{remark}
We have implicitly used that $\Phi_Y^{-1}(\eta)$ is connected, because it is a single $T$-orbit.
\end{remark}

\begin{example}
If $G = \SL(2,\C)$, then one can check that all reduced spaces are points. This is compatible with the fact that the multiplicities of the $K$-types of the principal series of $\SL(2,\C)$ are $1$. 
\end{example}
We work out the example $G = \SL(2,\R)$ in detail in Subsection \ref{sec SL2}, and discuss the groups
$\SU(p,1)$, $\SO_0(p,1)$ and $\SO_0(2,2)$ in Subsection \ref{sec ex mult free}. 


Corollary \ref{cor one 2} is closely related to the \emph{Corwin--Greenleaf multiplicity function} \cite{CG88}. This is the function $n\colon \kg^*/G \times \kk^*/K \to \Z_{\geq 0}$ given by
\[
n(\cO^G, \cO^K) = \# (\cO^G \cap p^{-1}(\cO^K))/K,
\]
where $p\colon \kg^*\to \kk^*$ is the restriction map.
If $\pi$ has regular infinitesimal character, then reduced spaces of the action by $K$ on $G/H$ are of the form 
 $(\cO^G \cap p^{-1}(\cO^K))/K$ above. Hence these space are points precisely if the Corwin--Greenleaf function of the corresponding orbits equals $1$, and empty if that function gives $0$. Together with a result by Nasrin, proving a special case of a conjecture by Kobayashi, this gives the following result.
 \begin{corollary} \label{cor CG} Suppose $K<G$ is maximal compact.
 Suppose that $\xi$ is regular, and let $\zeta$ be as in \eqref{eq Phi reg}. Suppose that
 \beq{eq cond Nasrin}
 \Ad^*(G)(\xi + \zeta) \cap ([\kk,\kk] + \kp)^{\perp} \not = \emptyset.
 \eeq
 Let $\delta \in \hat K$ have highest weight $\eta$.
If $\eta/i$ is a regular value of $\Phi_Y\colon Y \to I(Y)$, then
\[
[\pi|_K : \delta] \in \{0,1\}.
\]
The condition in Corollary \ref{cor one 2}(a) determines when this multiplicity is $1$.
 \end{corollary}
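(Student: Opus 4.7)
The plan is to combine Corollary \ref{cor one 2}(a) with Nasrin's theorem (the $H=K$ case of Kobayashi's conjecture, proved in \cite{Nasrin10}): the hypothesis \eqref{eq cond Nasrin} will be shown to force the reduced space $(G/H)_{(\eta+\rho^K)/i}$ to be either empty or a single point, after which Corollary \ref{cor one 2}(a) gives the result.

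The key geometric step is the following identification. Since $\xi$ is regular, the formula \eqref{eq Phi reg} shows that $gH \mapsto \Ad^*(g)(\xi + \zeta)$ is a $K$-equivariant diffeomorphism from $G/H$ onto $\cO^G := \Ad^*(G)(\xi+\zeta)$, intertwining $\Phi$ with the restriction map $p\colon \kg^* \to \kk^*$. Writing $\mu := (\eta + \rho^K)/i$ and $\cO^K := K\cdot \mu \subset \kk^*$, the standard slice identification $\Phi^{-1}(\mu)/T \cong \Phi^{-1}(\cO^K)/K$ yields
\[
(G/H)_{(\eta+\rho^K)/i} \cong \bigl(\cO^G \cap p^{-1}(\cO^K)\bigr)/K,
\]
so when $\mu$ is a regular value of $\Phi$, the cardinality of the reduced space equals the Corwin--Greenleaf multiplicity $n(\cO^G, \cO^K)$. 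By Nasrin's theorem, hypothesis \eqref{eq cond Nasrin} implies $n(\cO^G, \cO^K) \leq 1$ for every coadjoint orbit $\cO^K \subset \kk^*$, so the reduced space has at most one point. An application of Corollary \ref{cor one 2}(a) then gives $[\pi|_K : \delta] \in \{0,1\}$, with the value determined by the action of the isotropy group $\Gamma$ on $\C_{\lambda - \eta - \rho^M} \boxtimes \chi_M$.

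The main obstacle is the geometric identification above: one must check that transcribing the reduced space as a Corwin--Greenleaf quotient is fully consistent with the choice of $\zeta$ and the definition of $\Phi$ in \eqref{eq Phi reg}, and that regularity of $\eta/i$ for $\Phi_Y\colon Y \to I(Y)$ implies that $(\eta+\rho^K)/i$ is a regular value of $\Phi$ in the sense required by Corollary \ref{cor one 2}(a). Once these compatibilities are verified, the conclusion is immediate from the two cited results.
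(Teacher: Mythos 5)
Your proposal is correct and follows essentially the same route as the paper: identify the reduced space with the Corwin--Greenleaf quotient $(\cO^G \cap p^{-1}(\cO^K))/K$, invoke Nasrin's theorem to get $n(\cO^G,\cO^K)\leq 1$, and conclude via Corollary \ref{cor one 2}(a). The only cosmetic difference is that when the reduced space is empty the paper appeals to Corollary \ref{cor zero} (equivalently, the index of an empty reduced space is $0$ by Definition \ref{def quant red}) rather than to Corollary \ref{cor one 2}(a), whose hypothesis assumes the fibre is a point; your "at most one point" phrasing should make this case explicit, but it is immediate.
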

 \begin{proof}
 Nasrin proved that the condition \eqref{eq cond Nasrin} implies that $n( \Ad^*(G)(\xi + \zeta), \cO^K) \leq 1$ for all coadjoint orbits $\cO^{K} \in \kk^*/K$. See Theorem 1.3 in \cite{Nasrin10}. By Corollaries \ref{cor zero} and \ref{cor one 2}, this implies the claim.
 \end{proof}
Kobayashi conjectured that \eqref{eq cond Nasrin} implies that $n( \Ad^*(G)(\xi + \zeta), \cO^H) \leq 1$ for all coadjoint orbits $\cO^H$ of a subgroup $H<G$ such that $(G,H)$ is a symmetric pair. Nasrin's result used in the above proof shows that this conjecture is true for $H=K$. Note that The condition \eqref{eq cond Nasrin} can only hold if $G$ is of Hermitian type; i.e. $[\kk, \kk] \not= \kk$. A restatement of \eqref{eq cond Nasrin} is that $\Ad^*(G)(\xi + \zeta)$ is a coadjoint orbit through a central element of $\kk^* \cong \kk$.

Finally, we obtain a criterion for multiplicity-free restrictions of general \emph{admissible} representations. Let $\pi$ be an irreducible admissible representation of $G$. By the Langlands classification, $\pi$ is a quotient of an induced representation as on the right hand side of \eqref{eq pi ind}, where now $\nu \in (\ka^{\C})^*$ may be non-imaginary.
Let $\Phi\colon G/H \to \kk^*$ be the corresponding moment map as in Proposition \ref{prop moment map}. 
\begin{corollary} \label{cor admissible}
Let $\delta \in \hat K$, with highest weight $\eta$. 
Suppose that  $\Phi^{-1}(\eta + \rho^K)/K$ is a point if $\eta/i$ is a regular value of $\Phi_Y$, or $\Phi^{-1}(\eta + \rho^K + \varepsilon)/K$ is a point for all $\eta$ small enough if $\eta/i$ is a singular value of $\Phi_Y$. Then $[\pi|_K, \delta] \in \{0,1\}$.

In particular, if all reduced spaces for $\Phi$ are points, then $\pi$ restricts multiplicity-freely to $K$.
\end{corollary}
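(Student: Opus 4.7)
My plan is to reduce the admissible case to the standard (in fact tempered) case already handled in Corollary \ref{cor one 2}. By the Langlands classification, $\pi$ is a quotient of a standard representation
\[
\pi_{\mathrm{std}} = \Ind_{MAN}^G(\pi^M_{\lambda, R^+_M, \chi_M} \otimes e^{\nu} \otimes 1_N),
\]
for a suitable (possibly non-imaginary) $\nu \in (\ka^{\C})^*$, the same data used to define $\Phi$. Since $\pi$ is a quotient of $\pi_{\mathrm{std}}$, the $K$-multiplicities satisfy
\[
[\pi|_K : \delta] \leq [\pi_{\mathrm{std}}|_K : \delta]
\]
for every $\delta \in \hat K$, so it suffices to show that $[\pi_{\mathrm{std}}|_K : \delta] \in \{0,1\}$ under the stated hypothesis on $\Phi$.

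First I would observe that the restriction of an induced representation to $K$ is independent of $\nu$: by the compact picture of parabolic induction, one has a $K$-isomorphism
\[
\pi_{\mathrm{std}}\big|_K \;\cong\; \Ind_{K_M}^K\bigl( \pi^M_{\lambda, R^+_M, \chi_M}\big|_{K_M} \bigr),
\]
independent of $\nu$. In particular, replacing $\nu$ by any purely imaginary element $\nu_0 \in i\ka^*$ yields a tempered representation $\pi'$ with $\pi'|_K \cong \pi_{\mathrm{std}}|_K$ as $K$-modules. Moreover, the moment map $\Phi$ associated to $\pi_{\mathrm{std}}$ by Proposition \ref{prop moment map} depends only on $\xi = (\lambda + \rho^{G,M})/i$ and on the auxiliary element $\zeta$, but not on $\nu$; so the same $\Phi$ governs $\pi'|_K$.

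Next I would apply Corollary \ref{cor one 2} (in the regular value case) or its singular-value variant to the tempered representation $\pi'$ with the same data $(M, \lambda, R^+_M, \chi_M, H, \Phi)$. The hypothesis that $\Phi^{-1}(\eta + \rho^K)/K$ is a point (respectively, that a neighbourhood consists of points) is exactly the hypothesis of Corollary \ref{cor one 2}, so we obtain $[\pi'|_K : \delta] \in \{0, 1\}$. Combining the three steps gives
\[
[\pi|_K : \delta] \;\leq\; [\pi_{\mathrm{std}}|_K : \delta] \;=\; [\pi'|_K : \delta] \;\in\; \{0,1\},
\]
and since multiplicities are non-negative integers, $[\pi|_K : \delta] \in \{0,1\}$, proving the first assertion. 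The final claim follows immediately by applying this to every $\delta \in \hat K$.

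The only point requiring care is the verification that the Langlands quotient relation produces an inequality of $K$-multiplicities; this is standard but must be stated because $\pi_{\mathrm{std}}$ may fail to be unitary when $\nu$ is non-imaginary. I do not expect a genuine obstacle here, since passing to $K$-finite vectors makes $\pi \hookleftarrow \pi_{\mathrm{std}}$ a surjection of $(\kg,K)$-modules of finite length with the same central character, and surjections of admissible $K$-modules only decrease $K$-multiplicities.
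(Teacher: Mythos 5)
Your proposal is correct and follows essentially the paper's own route: write $\pi$ as a Langlands quotient of a standard representation of the form \eqref{eq pi ind}, apply Corollary \ref{cor one 2} to that standard representation, and note that passing to a quotient of $(\kg,K)$-modules can only decrease $K$-multiplicities. The intermediate step of replacing $\nu$ by a purely imaginary $\nu_0$ is redundant, since Remark \ref{rem standard reps} already licenses applying Corollary \ref{cor mult form K'} and \ref{cor one 2} for non-imaginary $\nu$, and it is also stated slightly imprecisely: for an arbitrary imaginary $\nu_0$ the induced representation need not be irreducible, hence need not be tempered (though this is harmless, both because the multiplicity formula applies to any representation of the form \eqref{eq pi ind} and because one could choose $\nu_0$ regular).
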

\begin{proof}
Corollary \ref{cor mult form K'}, and hence Corollary \ref{cor one 2}, apply to any standard representation $\pi$; see Remark \ref{rem standard reps}. So under the conditions stated, $\pi|_K$
 is a quotient of a multiplicity-free representation, and hence multiplicity-free itself.
 \end{proof}

We end this subsection with a conjecture that is a partial converse to Corollary \ref{cor one 2}.
\begin{conjecture}\label{conj mult free}
Let $H<G$ be a $\theta$-stable Cartan subgroup. Let $P = MAN <G$ be a cuspidal parabolic subgroup corresponding to $H$ (so that $A$ is the noncompact part of $H$). Then all tempered representations $\pi$ induced from $P$ restrict multiplicity-freely to $K$ if and only if all reduced spaces for all maps $\Phi\colon G/H \to \kk^*$ corresponding to such representations are points.
\end{conjecture}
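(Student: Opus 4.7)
The direction ``all reduced spaces are points $\Rightarrow$ all tempered induced from $P$ restrict multiplicity-freely to $K$'' follows directly from Corollary \ref{cor one 2}: under that hypothesis every index on the right-hand side of the formula in Theorem \ref{thm mult form} lies in $\{-1,0,1\}$, so each multiplicity $[\pi|_K:\delta]$ lies in $\{0,1\}$. The real content is the converse, which I would approach by contrapositive: assume that for some tempered $\pi$ induced from $P$ and some $\delta \in \hat K$ with highest weight $\eta_0$, the reduced space $(G/H)_{(\eta_0+\rho^K)/i}$ has positive dimension, and produce a tempered $\pi'$ induced from $P$ together with a $\delta'\in \hat K$ for which $[\pi'|_K:\delta']\ge 2$.

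The mechanism I would use is scaling of the representation parameters. For a positive integer $k$, replace $(\lambda,\nu,\chi_M)$ by $(\lambda_k,k\nu,\chi_M^k)$, where $\lambda_k := k(\lambda-\rho^M)+\rho^M$ so that $\lambda_k - \rho^M = k(\lambda-\rho^M)$ is integral and $\chi_M^k|_{T_M\cap Z_M} = \C_{\lambda_k - \rho^M}|_{T_M\cap Z_M}$. This produces a tempered representation $\pi_k$ still induced from $P$, and by Proposition \ref{prop moment map} its moment map $\Phi_k$ differs from $k\Phi$ by a bounded term. Setting $\eta_k := k\eta_0+(k-1)\rho^K$, the reduced space $(G/H)_{(\eta_k+\rho^K)/i}$ for $\pi_k$ agrees set-theoretically with $(G/H)_{(\eta_0+\rho^K)/i}$, but its $\Spinc$-structure has determinant line bundle essentially the $k$-th tensor power of the original one (by Lemma \ref{lem det GH}).

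The asymptotic Kawasaki--Riemann--Roch formula for high tensor powers of a $\Spinc$-prequantum line bundle on a compact orbifold then yields $\indx((G/H)_{(\eta_k+\rho^K)/i}) = C\, k^d + O(k^{d-1})$ as $k\to\infty$, where $d > 0$ is the complex dimension of the reduced orbifold and $C$ is a characteristic-class integral over the reduced space. Provided $C \neq 0$, the absolute value of the index exceeds $1$ for $k$ large, and Theorem \ref{thm mult form} delivers the required multiplicity $\ge 2$, contradicting multiplicity-freeness across the whole family of tempered representations induced from $P$.

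The principal obstacle is controlling this leading coefficient $C$: we are in the $\Spinc$ rather than the K\"ahler setting, so the almost complex structure inherited from $J$ on $(G/H)_{(\eta_0+\rho^K)/i}$ need not be compatible with the reduced Kirillov--Kostant symplectic form, and one must rule out cancellations in the top characteristic integral. I would try to handle this via Remark \ref{rem preq line}, which identifies the spinor bundle \eqref{eq spinor GH} as a $K$-equivariant $\Spinc$-prequantisation of the coadjoint orbit $\Ad^*(G)(\xi+\zeta)$, together with strict positivity of the Duistermaat--Heckman density on the relative interior of the moment polytope. The singular case ($\xi$ non-regular) would require first replacing $\Phi$ by the proper taming moment map $\Phi^\tau$ of Lemma \ref{lem taming moment} and then transferring the regular-case asymptotics by a continuity argument in $\tau$, exploiting invariance of the index under the homotopies provided by Theorem \ref{thm htp invar}.
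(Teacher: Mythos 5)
The statement you are proving is stated in the paper as a \emph{conjecture}, and the paper does not prove it: the authors note that the ``if'' direction follows from Corollary \ref{cor one 2} (exactly as you say), and for the ``only if'' direction they offer only heuristic evidence, namely that by Kawasaki/Atiyah--Singer the index on a positive-dimensional reduced space is $\int e^{\frac{1}{2}c_1(L_{\det}^{\eta+\rho^K})}\hat A$, which ``one expects'' to vary with $\pi$ and $\delta$. Your proposal for the converse is essentially a sharpened version of that same heuristic (scale the parameters, invoke asymptotic Riemann--Roch), and the place where it stalls is exactly the open content of the conjecture: you must show that the leading coefficient $C$ is nonzero, and you do not. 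In the $\Spinc$ setting there is no Duistermaat--Heckman positivity to appeal to: the reduced $\Spinc$-structure is obtained from $J$ and Lemma \ref{lem Spinc red}, its almost complex structure need not tame any reduced symplectic form, and characteristic integrals of the form $\int e^{\frac{k}{2}c_1}\hat A$ on an orbifold can vanish identically or oscillate in sign, so ``strict positivity of the Duistermaat--Heckman density'' is an assumption, not a consequence of Remark \ref{rem preq line}.

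There are further unclosed steps in the scaling mechanism itself. First, $\Phi_k$ is not $k\Phi$: by \eqref{eq def xi} one has $\xi_k = k\xi - (k-1)\rho^G|_{\kt_M}/i$ (plus the change in $\zeta$), so $\Phi_k^{-1}\bigl(k(\eta_0+\rho^K)/i\bigr)$ is not literally $\Phi^{-1}\bigl((\eta_0+\rho^K)/i\bigr)$; one must rule out wall-crossing of the shifted level as $k$ grows, and show the reduced spaces stabilise with determinant class growing linearly, before any asymptotic expansion applies. Second, $\eta_k = k\eta_0+(k-1)\rho^K$ need not be an integral dominant weight for all $k$ (only along a sub-progression), and $\lambda_k$ must remain a valid discrete-series/limit parameter with matching $\chi_M^k$; these are fixable but unaddressed. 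Third, in the singular case the ``continuity argument in $\tau$'' is not an argument: $\Phi^\tau$ depends on choices of $\tilde\xi$, $\zeta$ and $\tau$, Theorem \ref{thm htp invar} controls the global equivariant index but not individual reduced-space indices under such deformations, and the conjecture quantifies over \emph{all} such maps $\Phi$. So your first paragraph reproduces the paper's (correct) half of the statement, but the converse remains a conjecture; what you have written is a plausible strategy, not a proof, and the gap at $C\neq 0$ is precisely why the authors did not claim it.
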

The `if' part of this conjecture follows from Corollary \ref{cor one 2}. Evidence for the `only if' part is the following. Let $\delta \in \hat K$ have highest weight $\eta$. Let $H$, $\pi$ and $\Phi$ be as in the conjecture.
If the reduced space $(G/H)_{\eta + \rho^K}$ is smooth, the Atiyah--Singer index theorem and Theorem \ref{thm mult form} imply that
\[
[\pi|_K: \delta] = (-1)^{\dim(M/K_M)/2} \int_{(G/H)_{\eta + \rho^K}} e^{\frac{1}{2}c_1(L_{\det}^{{\eta + \rho^K}})} \hat A((G/H)_{\eta + \rho^K}).
\]
Here $L_{\det}^{\eta + \rho^K} \to (G/H)_{\eta + \rho^K}$ is induced by the determinant line bundle on $G/H$ from Lemma \ref{lem det GH}. If $(G/H)_{\eta + \rho^K}$ is not a point, then the right hand side depends on $c_1(L_{\det}^{{\eta + \rho^K}})$. Then one expects that number to vary with $\pi$ and $\delta$, and hence not to equal $1$ for all $\pi$ and $\delta$.

\subsection{Example: $G = \SL(2,\R)$} \label{sec SL2}

If $G = \SL(2,\R)$ and $K = \SO(2)$, then Theorem \ref{thm mult form} implies the usual multiplicity formulas for the $K$-types of tempered representations of $\SL(2,\R)$. This example illustrates the essential point that indices on reduced spaces that are points may be zero (as in Corollary \ref{cor one 2}), because these indices are orbifold indices.

\subsubsection{The discrete series}

Consider the holomorphic discrete series representation $D_n^+$ of $G = \SL(2, \R)$, for $n \in \{1, 2, 3, \ldots\}$. Then $H = T = \SO(2)$,  $M = G$, and $\lambda = n\alpha/2$, where $\alpha \in i\kt^*$ is the root mapping $\mattwo{0}{-1}{1}{0}$ to $2i$. So $\rho^G = \rho^M$, and $\xi = n\alpha/2i$. This element is regular, so $\Phi$ is the projection of $G/T\cong G\cdot \xi$ onto $\kk^*$.

Let $\delta_l = \C_l$ be the irreducible representation of $K = \SO(2)$ with weight $l \in \Z$; i.e.\ $\C_l = \C_{l\alpha/2}$.
 If $l\leq n$, then by Corollary \ref{cor zero},
\[
[D^+_n:\delta_l] = 0.
\]
If $l > n$, then $l\alpha/2i$ is a regular value of $\Phi$, and $\Phi^{-1}(l\alpha/2i)$ is a circle, acted on by $T = \SO(2)$ by rotations with weight $2$. Now $\kt_Y = \{0\}$, so $T_Y = \{I\}$ and $\Gamma = \{\pm I \}$ in Corollary \ref{cor one 2}. 
Since $Z_M \subset T_M=T$, we have $\C_{\lambda - \rho^M} \otimes \chi_M = \C_{\lambda-\rho^M} = \C_{(n-1)\alpha/2}$. Hence $\Gamma$ acts trivially on
\[
\C_{\lambda - \eta - \rho^M} \otimes \chi_M = \C_{n-l-1}
\]
precisely if $n-l$ is odd. We conclude that
\[
[D^+_n:\delta_l] = \left\{ 
\begin{array}{ll}
1 & \text{if $l = n+s$ for a positive odd integer $s$;}\\
0 & \text{otherwise}.
\end{array}
\right.
\]

In the same way, we find that for the antiholomorphic discrete series representation $D_n^-$,
\[
[D^-_n:\delta_l] = \left\{ 
\begin{array}{ll}
1 & \text{if $l = -n-s$ for a positive odd integer $s$;}\\
0 & \text{otherwise}.
\end{array}
\right.
\]

See Example 2.21 in \cite{Paradan11} for a 
 symplectic version of the computation of indices on reduced spaces in this example. 

\subsubsection{Limits of discrete series}

Consider the limit of discrete series representation $D_0^+$. Then, as in the discrete series case, $H = T = \SO(2)$ and  $M = G$. But now $\lambda = 0$, which is singular. So we have to use the taming moment map from Lemma \ref{lem taming moment}. Taking $\psi(gT) = (\Ad^*(g)\alpha/2i)|_{\kk}$, we have for all $\tau \in C^{\infty}(G/T)^K$,
\[
\Phi_X = \Phi^{\tau}_X = \tau \cdot (v^{\psi}, X^{G/T}).
\]
Let $\varphi\colon G/T \to [1,\infty[$ be the function such that for all $ g\in G$,
\[
\psi(gT) = \varphi(gT)\alpha/2i.
\]
Then
\[
\Phi^{\tau} = \tau \varphi \| (\alpha/2i)^{G/T}\|^2 \alpha/2i. 
\]
The factor $ \| (\alpha/2i)^{G/T}\|^2$ only vanishes at the point $eT$. So we can choose $\tau$ so that $\Phi^{\tau} = f\alpha/2i$ for a surjective, proper, $K$-invariant map $f \colon G/T \to [0,\infty[$ whose level sets are circles. (In fact we may take $\tau \equiv 1$.)
Then $\Phi^{\tau}$ is $K$-invariant, proper, taming, homotopic to $\psi$, and surjective onto the closed Weyl chamber containing $\alpha$. For all integers $l\geq 1$, $l\alpha/2i$ is a regular value of $\Phi^{\tau}$, and $(\Phi^{\tau})^{-1}(l\alpha/2i)$ is a circle, 
 acted on by $T = \SO(2)$ with rotations with weight $2$. So in the same way as for the discrete series, we find that
\[
[D^+_0:\delta_l] = \left\{ 
\begin{array}{ll}
1 & \text{if $l$ is a positive odd integer;}\\
0 & \text{otherwise}.
\end{array}
\right.
\]
And analogously,
\[
[D^-_0:\delta_l] = \left\{ 
\begin{array}{ll}
1 & \text{if $l$ is a  negative odd integer;}\\
0 & \text{otherwise}.
\end{array}
\right.
\]

\subsubsection{The principal series}

Consider the spherical principal series representation $P^+_{i\nu}$, for $\nu\geq0$. We now have 
\[
H =  \Bigl\{\mattwo{x}{0}{0}{x^{-1}}; x\not=0 \Bigr\},
\]
$M = \{\pm I\}$, $\lambda = 0$, and $\chi_M = \chi_+$, the trivial representation of $M$. 
Now $\kt_M = 0$, so 
 $\xi =0$.
For any nonzero  $\zeta   \in \ka$, the element $\xi + \zeta = \zeta$ is regular.
So $\Phi\colon G/H \to \kk^*$ is the projection map of the hyperbolic coadjoint orbit $G/H \cong G\cdot \zeta$ onto $\kk^*$. Therefore, for all $l \in \Z$, $\Phi^{-1}(l\alpha/2i)$ is a circle, on which $T = \SO(2)$ acts by rotations with weight $2$. Also, $l\alpha/2i$ is a regular value of $\Phi$.

In Corollary \ref{cor one 2}, we have
\[
\C_{\lambda - \eta -\rho^M} \otimes \chi_M |_{\Gamma}= \C_{-l} \otimes \chi_+|_{\Gamma} = \C_{-l}|_{\Gamma}
\] 
The group $\Gamma = \{\pm I\}$ acts trivially on this space precisely if $l$ is even. Hence
\[
[P^+_{i\nu}:\delta_l] = \left\{ 
\begin{array}{ll}
1 & \text{if $l$ is even;}\\
0 & \text{otherwise}.
\end{array}
\right.
\]

For a nonspherical principal series representation $P^-_{i\nu}$ (with $\nu>0$), we have $\chi_M  = \chi_-$, the nontrivial representation of $Z_M = M$. Hence
\[
\C_{\lambda - \eta- \rho^M} \otimes \chi_M |_{\Gamma}= \C_{-l} \otimes \chi_- |_{\Gamma} = \C_{-l+1}|_{\Gamma}
\]
Now $\Gamma = \{\pm I\}$ acts trivially on this space precisely if $l$ is odd. Hence
\[
[P^-_{i\nu}:\delta_l] = \left\{ 
\begin{array}{ll}
1 & \text{if $l$ is odd;}\\
0 & \text{otherwise}.
\end{array}
\right.
\]

\subsection{Multiplicity-freeness via dimension counts} \label{sec dim count}

Corollary \ref{cor one 2} implies a dimension-counting criterion  for the restriction of any admissible representation to $K$ to be multiplicity-free. Let $\pi$ be an admissible representation. By the Langlands classification of admissible representations, and the fact that any tempered representation is a subrepresentation of a representation induced from a discrete series representation, $\pi$ is a subrepresentation of a quotient of a representation of the form
\[
\tilde \pi := \Ind_{MAN}^G(\pi^M_{\lambda, \chi_M} \otimes e^{\nu} \otimes 1_N),
\]
for a cuspidal parabolic $MAN < G$, where $\pi^M_{\lambda, \chi_M}$ belongs to the discrete series of $M$, and $\nu \in (\ka^{\C})^*$ may be non-imaginary. Let $\Phi\colon G/H \to \kk^*$ be the moment map from Proposition \ref{prop moment map} for this situation.
We write $\dim(\im(\Phi))$ for the dimension of the relative interior of $\im(\Phi)$.
\begin{corollary} \label{cor mult free}
If
\beq{eq dim im Phi}
\dim(\im(\Phi))=
\dim(G)-\rank(G)-\dim(T), 
\eeq
then $[\pi|_K:\delta] \in \{0,1\}$ for all $\delta \in \hat K$.

In particular, if $\im(\Phi)$ has nonempty interior in $\kk^*$, and 
\[
\dim(G) \leq \rank(G)+\dim(T)+\dim(K),
\]
then $[\pi|_K:\delta] \in \{0,1\}$ for all $\delta \in \hat K$.
%
\end{corollary}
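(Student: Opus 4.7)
My plan is to invoke Corollary~\ref{cor admissible}: it suffices to show that under either set of hypotheses every reduced space $(G/H)_{(\eta+\rho^K)/i}$ is a single point or empty, where the case that $\eta/i$ is a singular value is reached by the Paradan--Vergne shift to a nearby regular value (Theorem~\ref{thm indx red} and Lemma~\ref{lem taming moment}). The core argument is a dimension count at a regular value $\xi \in \mathring{C} \cap \im(\Phi)$.

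First I would observe that at such a regular value the fiber $\Phi^{-1}(\xi)$ is a submanifold of $G/H$ of dimension $\dim(G/H) - \dim(\im(\Phi))$, since $\Phi$ is submersive at generic preimage points onto the smooth locus of $\im(\Phi)$. The stabiliser $K_\xi$ equals $T$ for a regular $\xi \in \mathring{C}$, and the reduced space is $\Phi^{-1}(\xi)/T$. Under the equality~\eqref{eq dim im Phi} of the first hypothesis the fiber has dimension exactly $\dim(T)$, so provided $T$ acts with trivial infinitesimal generic stabiliser the reduced space is zero-dimensional.

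The next step is to verify the triviality of the generic $T$-isotropy on $Y = \Phi^{-1}(\mathring{C})$. The $K$-stabiliser at $gH \in G/H$ is $K \cap gHg^{-1}$; its compact part lies in $gT_Mg^{-1}$, which for generic $g$ meets the fixed torus $T$ only in a finite group, since $T_M$ is moved by $\Ad(g)$ away from $T$ in generic directions. Combined with connectedness of regular fibres for our proper equivariant moment map (the Atiyah--Guillemin--Sternberg convexity theorem and its noncompact $\Spinc$ extension), this forces $\Phi^{-1}(\xi)$ to be a single $T$-orbit, so the reduced space is one point. Corollary~\ref{cor admissible} then gives the first statement.

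For the ``in particular'' statement, $\im(\Phi)$ having nonempty interior gives $\dim(\im(\Phi)) = \dim(K)$, while the inequality $\dim(G) \leq \rank(G) + \dim(T) + \dim(K)$ forces $\dim \Phi^{-1}(\xi) = \dim(G) - \rank(G) - \dim(K) \leq \dim(T)$; the same strategy, now with a possibly nontrivial $T$-isotropy of dimension $\dim(T) - \dim \Phi^{-1}(\xi)$, again produces a single $T$-orbit as the fiber and a single point as the reduced space. The main obstacle I foresee is rigorously establishing both the generic-isotropy statement and the fibre connectedness for the proper $\Spinc$-moment map on the noncompact manifold $G/H$; once these inputs are in hand the dimension count is routine.
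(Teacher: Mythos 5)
Your overall strategy is the same as the paper's: a dimension count showing the generic reduced spaces are zero-dimensional, connectedness of moment-map fibres to upgrade ``zero-dimensional'' to ``a point'', the point criterion (Corollary \ref{cor one 2}) to get multiplicities in $\{0,1\}$, and a subquotient argument to pass to $\pi$. But the two inputs you flag as obstacles are genuine gaps, and the second half of your argument is incorrect as stated. First, fibre connectedness: there is no ``noncompact $\Spinc$ extension'' of the Atiyah--Guillemin--Sternberg connectedness theorem; connectedness of fibres is a strictly symplectic phenomenon. The paper secures it precisely through the set-up of Section \ref{sec dim count}: an admissible $\pi$ is realised as a subrepresentation of a quotient of $\tilde\pi$ induced from a \emph{genuine discrete series} of $M$, so that $\xi$ is regular and $\Phi$ is the honest symplectic moment map \eqref{eq Phi reg} on the coadjoint orbit $\Ad^*(G)(\xi+\zeta)$; then fibre connectedness is available, Corollary \ref{cor one 2}(b) applies to $\tilde\pi$, and multiplicity-freeness passes to the subquotient $\pi$. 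Your route through Corollary \ref{cor admissible} attaches the moment map to $\pi$'s own Langlands datum, which may involve a limit of discrete series; in that singular case the relevant map is the deformed $\Phi^{\tau}$ of Lemma \ref{lem taming moment}, which is only a $\Spinc$-moment map, and no connectedness theorem applies. So the obstacle you foresee does not resolve itself: you must reduce to the regular (discrete-series-induced) case as the paper does, rather than quote a connectedness result that does not exist in the $\Spinc$ setting. Moreover the hypothesis \eqref{eq dim im Phi} concerns the $\Phi$ attached to $\tilde\pi$, not the one attached to $\pi$'s Langlands datum, so Corollary \ref{cor admissible} is not directly applicable anyway.

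Second, the ``in particular'' statement. Your proposal to absorb a dimension discrepancy into ``a possibly nontrivial $T$-isotropy of dimension $\dim(T)-\dim\Phi^{-1}(\xi)$'' is inconsistent with your own hypothesis: if $\im(\Phi)$ has nonempty interior, then $d\Phi$ has generic rank $\dim(K)$, so generic stabilisers are \emph{finite}; since the fibre over a regular value in the interior of the chamber is $T$-invariant, it then contains $T$-orbits of dimension $\dim(T)$, which forces $\dim(G)-\rank(G)-\dim(K)\geq\dim(T)$. Combined with the assumed inequality this is an equality, i.e.\ one is back in the case \eqref{eq dim im Phi}, the generic fibre is a finite union of $\dim(T)$-dimensional $T$-orbits, and connectedness (available because $\xi$ is regular, as above) gives a single orbit. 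A positive-dimensional generic $T$-isotropy on the fibre would instead contradict the nonempty interior. Relatedly, your heuristic for trivial generic $T$-isotropy (``$T_M$ is moved by $\Ad(g)$ away from $T$'') is not a proof, and controlling this isotropy is exactly what makes the dimension count close; in the situation where all the paper's applications live, it is the nonempty-interior hypothesis that delivers it for free.
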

\begin{proof}
For a map $\Phi$ as in the corollary, 
the condition \eqref{eq dim im Phi} implies that the reduced space $\Phi^{-1}(\sigma)/T$ is zero-dimensional for every $\sigma$ in the relative interior of $\im(\Phi)$. Since $\xi \in \kt_M^*$ is regular,  $\Phi$ is a moment map in the symplectic sense, so $\Phi^{-1}(\sigma)/T$ is connected for such $\sigma$, hence a point. So by Corollary \ref{cor one 2}(b), which applies to representations like $\tilde \pi$, that representation restricts multiplicity-freely to $K$. Hence so does $\pi$. 
\end{proof}

In Subsection \ref{sec ex mult free}, we show that admissible representations of $\SU(p,1)$, $\SO_0(p,1)$ and $\SO_0(2,2)$ with regular infinitesimal characters have multiplicity-free restrictions to maximal compact subgroups. This is based on Corollary \ref{cor mult free} and techniques for computing the dimension of the image of $\Phi$ developed in Subsection \ref{sec dim im Phi}. 

From now on, suppose that $K<G$ is a maximal compact subgroup.
The condition 
\[
\dim(G) \leq \rank(G)+\dim(T)+\dim(K),
\]
in Corollary \ref{cor mult free} holds for the following classical semisimple groups:
\begin{itemize}
\item $\SL(2,\C)$;
\item $\SO(n,\C)$ for $n \leq 4$;
\item $\SL(2, \HH)$; 
\item $\SL(2,\R)$;
\item $\SO^*(4)$;
\item $\SU(p,1)$ for all $p$; 
\item $\SO_0(p,1)$ for all $p$,  and $\SO_0(2,2)$.
\end{itemize}
So for these groups, any admissible representation for which $\im(\Phi)$ has nonempty interior in $\kk^*$ has multiplicity-free restriction to a maximal compact subgroup.
To determine the dimension of the image of $\Phi$, we use the equality
\[
\dim(\im(\Phi)) = \dim(K/T) + \dim(\im(\Phi) \cap \kt).
\]

\subsection{Computing the dimension of $\im(\Phi)$}\label{sec dim im Phi}

The following proposition is a tool to compute $\dim(\im(\Phi) \cap \kt)$.

Let $\kh_c\subset \kg$ be a maximally compact, $\theta$-stable Cartan subalgebra. Let $R^+_n\subset R(\kg^{\C}, \kh^{\C}_c)$ be a choice of positive, imaginary, noncompact roots. For every $\alpha \in R^+_n$, let $E_{\alpha} \in \kg^{\C}_{\alpha}$ be any nonzero vector. Let $\bar E_{\alpha}$ be its complex conjugate with respect to the real form $\kg$, and set $H_{\alpha} := [E_{\alpha}, \bar E_{\alpha}]$. 
\begin{proposition}\label{prop dim im mu}
Let $\kh \subset \kg$ be
any $\theta$-stable Cartan subalgebra.
Suppose that $\Phi\colon G/H \to \kk$ is given by
\[
\Phi\colon G/H \cong \Ad(G)(\xi+\zeta) \hookrightarrow \kg \to \kk,
\]
for $\xi \in \kt \cap \kh$ and $\zeta \in \ka = \kh \cap \kp$ such that $\xi + \zeta \in \kh$ is regular. Then
$\im(\Phi) \cap \kt$ contains the convex hull of the set
\[
\bigcup_{\alpha \in R^+_n} \xi + I_{\alpha}iH_{\alpha}, 
\]
where for all $\alpha \in R^+_n$, the set $I_{\alpha}$ equals either $\R$, $[0,\infty)$ or $(-\infty, 0]$.
\end{proposition}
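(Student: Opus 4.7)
My plan is to reduce the statement to an $SL(2,\R)$-computation for each noncompact imaginary root and then to invoke a convexity theorem to fill in the hull. For each $\alpha \in R^+_n$, I would introduce $X_\alpha := E_\alpha + \bar E_\alpha$, $Y_\alpha := i(E_\alpha - \bar E_\alpha)$ and $iH_\alpha = i[E_\alpha,\bar E_\alpha]$. Using that $\alpha$ is imaginary ($\theta\alpha = \alpha$) and noncompact ($\theta E_\alpha = -E_\alpha$), one has $X_\alpha, Y_\alpha \in \kp$ and $iH_\alpha \in \kh_c \cap \kk \subset \kt$. Short bracket calculations then show that $\mathfrak{s}_\alpha := \R\, iH_\alpha \oplus \R\, X_\alpha \oplus \R\, Y_\alpha$ is a copy of $\mathfrak{sl}_2(\R)$ with $iH_\alpha$ as its compact generator; let $S_\alpha \subset G$ be the corresponding connected subgroup.

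Next, I would show that each ray $\xi + I_\alpha iH_\alpha$ lies in $\Phi(G/H) \cap \kt$ by studying the projection to $\kk$ of the $S_\alpha$-orbit of $\xi + \zeta$. Decomposing $\xi + \zeta = v_\alpha + w_\alpha$, with $v_\alpha$ in the minimal $\ad(\mathfrak{s}_\alpha)$-invariant subspace containing its $\mathfrak{s}_\alpha$-component and $w_\alpha$ centralising $\mathfrak{s}_\alpha$, the compact projection of $\Ad(S_\alpha)(\xi+\zeta)$ equals $w_\alpha|_\kk$ plus the compact projection of $\Ad(S_\alpha) v_\alpha$. The $\mathfrak{sl}_2(\R)$-orbit classification tells us that this second projection sweeps out either the whole line $\R \cdot iH_\alpha$ (when $v_\alpha$ is hyperbolic, so the orbit is a one-sheeted hyperboloid) or a half-line (when $v_\alpha$ is elliptic, so the orbit is one sheet of a two-sheeted hyperboloid); this yields the three possible intervals $I_\alpha$, with $w_\alpha|_\kk$ accounting for the base point $\xi$.

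Finally, I would invoke the convexity of the image of a proper moment map intersected with the closed positive Weyl chamber $\overline{C}$ of $\kt^*$ (Kirwan convexity, or its noncompact version; cf.\ the properness statement (1.3) in \cite{Paradan99}). Up to the identification $\kg^* \cong \kg$ by minus the Killing form, $\Phi$ is the symplectic moment map for the $K$-action on the coadjoint orbit $\Ad^*(G)(\xi + \zeta)$, so this theorem applies and gives convexity of $\Phi(G/H) \cap \overline{C}$. Together with the $W_K$-equivariance of $\Phi$, this promotes the union $\bigcup_\alpha (\xi + I_\alpha iH_\alpha)$ to its convex hull inside $\Phi(G/H) \cap \kt$.

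The main obstacle is the second step: determining from the data $(\xi, \zeta, \alpha)$ precisely which of $\R$, $[0,\infty)$, or $(-\infty,0]$ occurs, and tracking how $\xi + \zeta$ decomposes against the $\mathfrak{s}_\alpha$'s, which are built out of $\kh_c$ and not out of $\kh$. When $\kh \neq \kh_c$, a preliminary Cayley-transform-type adjustment is likely needed to bring $\xi + \zeta$ into a standard position relative to $\kh_c$ before the $\mathfrak{sl}_2(\R)$-classification can be read off directly.
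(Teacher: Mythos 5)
Your outline works cleanly only in the case $\kh = \kh_c$, which is exactly the easy half of the statement; for a general $\theta$-stable Cartan the key step (your step 2) is left unresolved, and as formulated it would not go through. The difficulty is that your $\mathfrak{s}_\alpha$ is built from root vectors $E_\alpha, \bar E_\alpha$ for $\kh_c$, while $\xi+\zeta$ lies in $\kh$. The decomposition $\xi + \zeta = v_\alpha + w_\alpha$ with $w_\alpha$ centralising $\mathfrak{s}_\alpha$ and $v_\alpha$ lying in (or behaving like an element of) $\mathfrak{s}_\alpha$ is not available in general: the $\ad(\mathfrak{s}_\alpha)$-isotypic decomposition of $\xi+\zeta$ can have components in nontrivial $\mathfrak{s}_\alpha$-modules other than the adjoint one, and for such components the $\Ad(S_\alpha)$-orbit projected to $\kk$ neither stays on the line $\R\, iH_\alpha$ nor even in $\kt$, so the rank-one hyperboloid picture does not apply. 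You flag this yourself (``a Cayley-transform-type adjustment is likely needed''), but that adjustment \emph{is} the content of the proposition in the non-maximally-compact case, so the proposal has a genuine gap rather than a routine omission.

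The paper resolves precisely this point by Cayley-transforming the root vectors rather than trying to decompose $\xi+\zeta$: for a root $\alpha$ used in passing from $\kh_c$ to $\kh$ it produces $X_{\pm\alpha} = \pm i\, c_\alpha(E_\alpha)$, $\mp i\, c_\alpha(\bar E_\alpha)$, which lie in root spaces \emph{for $\kh$}, so that $\ad(\xi+\zeta)$ acts on them by $\pm\langle\alpha,\xi+\zeta\rangle$ and the one-parameter orbit $\Ad(\exp t(X_\alpha+X_{-\alpha}))(\xi+\zeta)$ can be summed explicitly in closed $\cosh/\sinh$ form. Two elementary lemmas then give, respectively, the full line $\xi + \R\, iH_\alpha$ (when $X_\alpha - X_{-\alpha} \in \kt$ and $[X_\alpha,X_{-\alpha}]\in\ka$, i.e.\ for the Cayley-transformed roots) and the half-lines $\xi + \R_{\geq 0}\langle\alpha,\xi\rangle\eta_\alpha$ (when $X_\alpha+X_{-\alpha}\in\kp$ and $[X_\alpha,X_{-\alpha}]\in i\kt$, i.e.\ for the remaining noncompact imaginary roots), with convexity of $\im(\Phi)\cap\kt$ (as you also propose) supplying the convex hull, and an induction over successive Cayley transforms covering all $\theta$-stable Cartans. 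To repair your argument you would need to carry out this adaptation of the $\mathfrak{sl}(2,\R)$-triples to $\kh$ explicitly, together with a criterion, in terms of whether the transformed triple meets $\kt$ and $\ka$ as above, deciding which of $\R$, $[0,\infty)$, $(-\infty,0]$ occurs; without it the proposal does not establish the statement.
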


We will use Lemmas \ref{lem im mu 3} and \ref{lem im mu 1} below to prove Proposition \ref{prop dim im mu}.
\begin{lemma}\label{lem im mu 3}
Consider the map
\[
\Phi\colon G/H \cong \Ad(G)(\xi+\zeta) \hookrightarrow \kg \to \kk,
\]
for $\xi \in \kt \cap \kh$ and $\zeta \in \ka = \kh \cap \kp$ such that $\xi + \zeta \in \kh$ is regular.
Suppose 
 that there is a set of roots $S \subset R(\kg^{\C}, \kh^{\C})$, and for every $\alpha \in S$, there are $X_{\pm \alpha} \in \kg^{\C}_{\pm \alpha}$ such that
\begin{itemize}
\item $X_{\alpha} + X_{-\alpha} \in \kg$;
\item $X_{\alpha}- X_{-\alpha} \in \kt$;
\item $\eta_{\alpha} := [X_{\alpha}, X_{-\alpha}] \in \ka$.
\item $\langle \alpha, \eta_{\alpha}\rangle >0$.
\end{itemize}
Then $\im(\Phi) \cap \kt$ contains
\[
\xi + \Span_{\R}\{(X_{\alpha} - X_{-\alpha}); \alpha \in S\}.
\]
\end{lemma}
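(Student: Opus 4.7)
The plan is to first show, for each individual root $\alpha \in S$, that the entire line $\xi + \R(X_\alpha - X_{-\alpha})$ lies in $\im(\Phi) \cap \kt$ by exhibiting a one-parameter orbit inside $\Ad(G)(\xi + \zeta)$ whose projection to $\kk$ traces it out, and then to obtain the full affine span by an appeal to convexity of the moment image.

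For each $\alpha \in S$, I would set $Y_\alpha := X_\alpha + X_{-\alpha} \in \kg$. The hypotheses that $X_\alpha \pm X_{-\alpha}$ both lie in $\kg$ force $X_{\pm\alpha} \in \kg^{\C}_{\pm\alpha} \cap \kg$, so $\alpha$ is necessarily a real root; in particular, $\alpha(\xi) = 0$ since $\xi \in \kh \cap \kk$ and real roots vanish on the compact part of any $\theta$-stable Cartan. Combined with the sign condition $\langle\alpha, \eta_\alpha\rangle > 0$, the triple $(Y_\alpha, X_\alpha - X_{-\alpha}, \eta_\alpha)$ generates a split real $\mathfrak{sl}_2$-subalgebra of $\kg$.

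Next, I would decompose $\xi + \zeta = \xi + c\eta_\alpha + \zeta''$, with $c = \alpha(\zeta)/\langle\alpha, \eta_\alpha\rangle$ and $\zeta'' \in \kh \cap \ker(\alpha)$; regularity of $\xi + \zeta$ together with $\alpha(\xi) = 0$ forces $c \neq 0$. Since $\xi$ is annihilated by $\ad(Y_\alpha)$ and $\zeta''$ commutes with $Y_\alpha$, the operator $\Ad(\exp(sY_\alpha))$ reduces to an $\mathfrak{sl}_2$-computation on $\Span\{\eta_\alpha, X_\alpha - X_{-\alpha}\}$. Writing $\mu := \sqrt{2\langle\alpha, \eta_\alpha\rangle}$, one obtains
\[
\Ad(\exp(sY_\alpha))(\xi + \zeta) = \xi + c\cosh(s\mu)\eta_\alpha - \frac{c\langle\alpha, \eta_\alpha\rangle}{\mu}\sinh(s\mu)(X_\alpha - X_{-\alpha}) + \zeta''.
\]
Projecting to $\kk$ kills the $\eta_\alpha$ and $\zeta''$ terms (both in $\ka \subset \kp$) while retaining the $(X_\alpha - X_{-\alpha})$ term (in $\kt$) with a nonzero real coefficient proportional to $\sinh(s\mu)$. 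As $s$ ranges over $\R$, this recovers the entire line $\xi + \R(X_\alpha - X_{-\alpha}) \subset \im(\Phi)\cap\kt$.

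Finally, I would combine the resulting lines as $\alpha$ varies in $S$ by invoking convexity of the moment polytope $\Phi(G/H) \cap \overline{C}$ for a closed positive Weyl chamber $\overline{C}$ (Kirwan's theorem, extended to the proper moment map setting by Lerman--Meinrenken--Tolman--Woodward); by Weyl invariance, $\im(\Phi) \cap \kt$ contains the convex hull of the lines $\xi + \R(X_\alpha - X_{-\alpha})$. An elementary convex-geometry identity (writing any $\sum_\alpha s_\alpha v_\alpha$ as a uniform convex combination of the points $|S| \cdot s_\alpha v_\alpha$) identifies this convex hull with the claimed affine subspace $\xi + \Span_\R\{X_\alpha - X_{-\alpha}: \alpha \in S\}$. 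The main obstacle is the reality bookkeeping in the $\mathfrak{sl}_2$ computation, namely confirming that the projection coefficient along $X_\alpha - X_{-\alpha}$ is real — which hinges precisely on deducing from the hypotheses that $\alpha$ is a real root. The convexity step is routine given the standing assumption that $\Phi$ is proper.
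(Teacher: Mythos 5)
Your proof is correct and takes essentially the same route as the paper: flow along $\exp\bigl(t(X_{\alpha}+X_{-\alpha})\bigr)$, compute the adjoint action via hyperbolic functions, project to $\kk$ to sweep out the line $\xi + \R(X_{\alpha}-X_{-\alpha})$, and conclude with convexity of $\im(\Phi)\cap\kt$. Your $\mathfrak{sl}_2$/real-root bookkeeping (showing $\alpha$ is real, so $\langle\alpha,\xi\rangle = 0$ and the coefficient $\langle\alpha,\zeta\rangle$ is real and nonzero by regularity) just makes explicit what the paper handles by observing that the $i\kg$-component of its formula must vanish.
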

\begin{proof}
Fix $\alpha \in R(\kg^{\C}, \kh^{\C})$ and $X_{\pm \alpha} \in \kg^{\C}_{\pm\alpha}$. Write $\eta_{\alpha} := [X_{\alpha}, X_{-\alpha}]$.
One proves by induction that for every positive integer $j$,
\[
\begin{split}
\ad(X_{\alpha}+ X_{-\alpha})^{2j}(\xi+\zeta) &= 2^j\langle \alpha, \xi+\zeta \rangle \langle\alpha, \eta_{\alpha} \rangle^{j-1}\eta_{\alpha};\\
\ad(X_{\alpha}+ X_{-\alpha})^{2j+1}(\xi+\zeta) &= -2^j\langle \alpha, \xi +\zeta \rangle \langle\alpha, \eta_{\alpha} \rangle^{j}(X_{\alpha}-X_{-\alpha}).
\end{split}
\]
Suppose $\langle \alpha, \eta_{\alpha}\rangle > 0$. 
Then the above equalities imply that for all $t \in \R$,
\begin{multline} \label{eq Ad X alpha 2}
\Ad(\exp(t(X_{\alpha}+X_{-\alpha})))(\xi+\zeta) 
	=\\
	 \xi + \zeta +  \frac{\langle \alpha, \xi +\zeta \rangle }{\langle \alpha, \eta_{\alpha} \rangle } \eta_{\alpha}
		 \sum_{j=1}^{\infty} \frac{1}{(2j)!} t^{2j} 2^j \langle \alpha, \eta_{\alpha} \rangle^{j} \\
		  -\langle \alpha, \xi+\zeta \rangle (X_{\alpha} - X_{-\alpha})\sum_{j=0}^{\infty} \frac{1}{(2j+1)!} t^{2j+1}2^j \langle \alpha, \eta_{\alpha}\rangle\\
	= \xi + \zeta + \langle \alpha, \xi +\zeta \rangle\Bigl(
	  \frac{\cosh(t\sqrt{2\langle \alpha, \eta_{\alpha}\rangle} ) -1}{\langle \alpha, \eta_{\alpha} \rangle } \eta_{\alpha}
-\frac{\sinh(t\sqrt{2\langle \alpha, \eta_{\alpha}\rangle} )}{\sqrt{2\langle \alpha, \eta_{\alpha} \rangle}} (X_{\alpha} - X_{-\alpha}) 
	\Bigr).
\end{multline}

%

Suppose $\alpha \in S$, and let $X_{\pm \alpha}$ be as in the lemma. Then, using \eqref{eq Ad X alpha 2} and the fact that both sides of this equality lie in $\kg$ (so the component of the right hand side in $i\kg$ is zero), we find that
\[
\Phi(\exp(t(X_{\alpha}+X_{-\alpha}))H) =  \xi -\frac{\sinh(t\sqrt{2\langle \alpha, \eta_{\alpha}\rangle} )}{\sqrt{2\langle \alpha, \eta_{\alpha} \rangle}} (X_{\alpha} - X_{-\alpha}) \quad \in \kt. 
\]
So 
\[
\xi + \R(X_{\alpha} - X_{-\alpha}) \in \im(\mu) \cap \kt.
\]
And since $\Phi$ is a moment map in the symplectic sense, its image intersected with $\kt$ is convex.
%
%
%
%
%
%
\end{proof}

\begin{example}
If $G = \SL(2,\R)$, $\xi = 0$, $\zeta = \begin{pmatrix} 1 & 0 \\ 0 & -1\end{pmatrix}$, $\kh = \ka = \R\zeta$, $\langle \alpha, \zeta \rangle = 2$, $S = \{\alpha\}$, and
\[
X_{\alpha} =  \begin{pmatrix} 0 & 1 \\ 0 & 0\end{pmatrix}; \qquad X_{-\alpha} =  \begin{pmatrix} 0 & 0 \\ 1 & 0\end{pmatrix},
\]
then $X_{\alpha} - X_{-\alpha} \in \kt$, $\eta_{\alpha} = \zeta \in \ka$, and Lemma \ref{lem im mu 3} states that $\im(\Phi)$ contains the line $\R\zeta$, and is therefore surjective.
\end{example}

\begin{lemma}\label{lem im mu 1}
Consider the map
\[
\Phi\colon G/H \cong \Ad(G)(\xi+\zeta) \hookrightarrow \kg \to \kk,
\]
for $\xi \in \kt \cap \kh$ and $\zeta \in \ka = \kh \cap \kp$ such that $\xi + \zeta \in \kh$ is regular.
Suppose that there is a set of roots $S \subset R(\kg^{\C}, \kh^{\C})$, and for every $\alpha \in S$, there are $X_{\pm \alpha} \in \kg^{\C}_{\pm \alpha}$ such that
\begin{itemize}
\item $X_{\alpha}+ X_{-\alpha} \in \kp$;
\item $\eta_{\alpha} := [X_{\alpha}, X_{-\alpha}] \in i\kt$;
\item $\langle \alpha, \eta_{\alpha}\rangle >0$.
\end{itemize}
Then $\im(\Phi) \cap \kt$ contains the convex hull of
\[
\bigcup_{\alpha \in S} \bigl( \xi + \R_{\geq 0} {\langle \alpha, \xi \rangle } \eta_{\alpha} \bigr).
\]
\end{lemma}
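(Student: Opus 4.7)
The plan is to follow the strategy of the proof of Lemma \ref{lem im mu 3} closely; the main differences are that the resulting curves in $\kt$ are half-lines rather than lines, and that a different one of the two terms in the expansion of $\Ad(\exp(tY_\alpha))(\xi+\zeta)$ survives the projection to $\kk$. Setting $Y_\alpha := X_\alpha + X_{-\alpha} \in \kp$, I would first observe that the bracket identities $[Y_\alpha, \xi + \zeta] = -\langle\alpha,\xi+\zeta\rangle(X_\alpha - X_{-\alpha})$ and $[Y_\alpha, X_\alpha - X_{-\alpha}] = -2\eta_\alpha$ are formal consequences of the root-space structure that do not invoke any reality constraint. Hence the inductive computation and power-series summation from the proof of Lemma \ref{lem im mu 3} apply verbatim, yielding
\[
\Ad(\exp(tY_\alpha))(\xi+\zeta) = \xi + \zeta + c_1(t)\eta_\alpha + c_2(t)(X_\alpha - X_{-\alpha})
\]
with $c_1(t) = \langle\alpha,\xi+\zeta\rangle\frac{\cosh(t\sqrt{2\langle\alpha,\eta_\alpha\rangle}) - 1}{\langle\alpha,\eta_\alpha\rangle}$ and $c_2(t) = -\langle\alpha,\xi+\zeta\rangle\frac{\sinh(t\sqrt{2\langle\alpha,\eta_\alpha\rangle})}{\sqrt{2\langle\alpha,\eta_\alpha\rangle}}$.

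Next I would analyse the reality structure, which is the essential new ingredient. The hypotheses $X_\alpha + X_{-\alpha} \in \kp$ and $\eta_\alpha = [X_\alpha, X_{-\alpha}] \in i\kt$ force $\alpha$ to be a noncompact imaginary root: it vanishes on $\ka$, so $\langle\alpha,\zeta\rangle = 0$, and it takes imaginary values on $\kt$, so $\langle\alpha,\xi\rangle \in i\R$. Consequently $\langle\alpha,\xi+\zeta\rangle = \langle\alpha,\xi\rangle \in i\R$, so both $c_1(t)$ and $c_2(t)$ lie in $i\R$. Using the conjugation $\sigma$ of $\kg^{\C}$ with respect to the real form $\kg$, which swaps $\kg^{\C}_{\alpha}$ and $\kg^{\C}_{-\alpha}$ for imaginary $\alpha$, and the fact that $\sigma(X_\alpha + X_{-\alpha}) = X_\alpha + X_{-\alpha}$, one obtains $\sigma(X_\alpha) = X_{-\alpha}$, whence $X_\alpha - X_{-\alpha} \in i\kg$; combined with $\theta(X_\alpha - X_{-\alpha}) = -(X_\alpha - X_{-\alpha})$ (which holds because $\alpha$ is noncompact imaginary), this gives $X_\alpha - X_{-\alpha} \in i\kp$.

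Projecting $\Ad(\exp(tY_\alpha))(\xi+\zeta)$ to $\kk$ along $\kp$, both $\zeta \in \ka \subset \kp$ and $c_2(t)(X_\alpha - X_{-\alpha}) \in \kp$ drop out, while $c_1(t)\eta_\alpha$ stays in $\kt \subset \kk$ (imaginary times imaginary is real). This gives
\[
\Phi(\exp(tY_\alpha)H) = \xi + \langle\alpha,\xi\rangle\cdot \frac{\cosh(t\sqrt{2\langle\alpha,\eta_\alpha\rangle}) - 1}{\langle\alpha,\eta_\alpha\rangle}\,\eta_\alpha \quad \in \kt.
\]
As $t$ ranges over $\R$, the scalar factor $(\cosh - 1)/\langle\alpha,\eta_\alpha\rangle$ sweeps out all of $[0,\infty)$, so the entire half-line $\xi + \R_{\geq 0}\langle\alpha,\xi\rangle\eta_\alpha$ lies in $\im(\Phi) \cap \kt$. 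I would then invoke the same convexity of $\im(\Phi) \cap \kt$ that was used at the end of the proof of Lemma \ref{lem im mu 3} (coming from the moment-map interpretation of $\Phi$, valid because $\xi + \zeta$ is regular) to conclude that the convex hull over $\alpha \in S$ of these half-lines is contained in $\im(\Phi) \cap \kt$.

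The main point at which care is required is the reality analysis: it is this which determines that, in contrast to Lemma \ref{lem im mu 3}, the $(X_\alpha - X_{-\alpha})$-term is invisible to the projection to $\kk$ while the $\eta_\alpha$-term survives. The half-line rather than full-line contribution then arises naturally, because the surviving coefficient is $(\cosh - 1)/\langle\alpha,\eta_\alpha\rangle \geq 0$ rather than the real-valued $\sinh$. Once this reality bookkeeping is in hand, the argument is a direct adaptation of the one in Lemma \ref{lem im mu 3}.
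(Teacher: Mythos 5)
Your proposal is correct and follows essentially the same route as the paper's proof: the same iterated-bracket/power-series computation of $\Ad(\exp(t(X_\alpha+X_{-\alpha})))(\xi+\zeta)$, the same resulting formula $\Phi(\exp(t(X_\alpha+X_{-\alpha}))H) = \xi + \frac{\langle\alpha,\xi\rangle}{\langle\alpha,\eta_\alpha\rangle}\bigl(\cosh(t\sqrt{2\langle\alpha,\eta_\alpha\rangle})-1\bigr)\eta_\alpha$, and the same appeal to convexity of $\im(\Phi)\cap\kt$. The only (harmless) difference is how the $(X_\alpha - X_{-\alpha})$-term is discarded: you show $\alpha$ is noncompact imaginary, so $\langle\alpha,\zeta\rangle=0$ and that term lies in $\kp$ and dies under the projection to $\kk$, whereas the paper argues it lies in $\kg\cap i\kg=\{0\}$.
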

\begin{proof}
As in the proof of Lemma \ref{lem im mu 3}, we find that
for all $t \in \R$,
\begin{multline}\label{eq Phi X alpha}
\Phi(\exp(t(X_{\alpha}+X_{-\alpha}))H) = \cosh(\ad(t(X_{\alpha}+X_{-\alpha})))\xi + \sinh(\ad(t(X_{\alpha}+X_{-\alpha})))\zeta\\
	= \xi + \frac{\langle \alpha, \xi \rangle }{\langle \alpha, \eta_{\alpha} \rangle } 
		\bigl(\cosh(t\sqrt{2\langle \alpha, \eta_{\alpha}\rangle} ) -1\bigr)  \eta_{\alpha}
		- \frac{ \langle \alpha, \zeta \rangle}{\sqrt{2\langle \alpha, \eta_{\alpha} \rangle}} \sinh(t\sqrt{2\langle \alpha, \eta_{\alpha}\rangle} )(X_{\alpha} - X_{-\alpha}).
\end{multline}
The left hand side and the first term on the right hand side lie in $\kg$, hence so does the second term on the right hand side. But
\[
X_{\alpha} - X_{-\alpha} = \frac{1}{\langle \alpha, \eta_{\alpha} \rangle}[\eta_{\alpha}, X_{\alpha}+X_{-\alpha}] \in i\kg.
\]
And $\langle\alpha, \zeta\rangle \in \R$, so the second term on the right hand side of \eqref{eq Phi X alpha} lies in $i\kg\cap\kg  = \{0\}$. 
We conclude that
\[
\Phi(\exp(t(X_{\alpha}+X_{-\alpha}))H) =  \xi + \frac{\langle \alpha, \xi \rangle }{\langle \alpha, \eta_{\alpha} \rangle } 
		\bigl(\cosh(t\sqrt{2\langle \alpha, \eta_{\alpha}\rangle} ) -1\bigr)  \eta_{\alpha} \quad \in \kt.
\]
So
\[
\xi + \R_{\geq 0} {\langle \alpha, \xi \rangle } \eta_{\alpha} \subset \im(\Phi) \cap \kt.
\]
The claim again follows by convexity of $\im(\Phi) \cap \kt$.
%
\end{proof}

\begin{example}
If $G = \SL(2,\R)$, $\xi = \begin{pmatrix} 0 & -1 \\ 1 & 0\end{pmatrix}$, $\zeta = 0$, $\kh = \kt = \R\xi$, $\langle \alpha, \xi \rangle = 2i$, $S = \{\alpha\}$, and
\[
X_{\alpha} =  \frac{1}{2}\begin{pmatrix} 1 & -i \\ -i & -1\end{pmatrix}; \qquad X_{-\alpha} =  \frac{1}{2}\begin{pmatrix} 1 & i \\ i & -1\end{pmatrix},
\]
then $X_{\alpha} + X_{-\alpha} \in \kp$, $\eta_{\alpha} = -i\xi \in i\kt$, and Lemma \ref{lem im mu 1} states that $\im(\Phi)$ contains the half-line 
$[1,\infty)\xi$. (In this case, we actually find that $\im(\Phi)$ equals that half-line.)
\end{example}

\begin{proof}[Proof of Proposition \ref{prop dim im mu}]
For every $\alpha \in R^+_n$, the element $H_{\alpha} = [E_{\alpha}-\bar E_{\alpha}, E_{\alpha}+\bar E_{\alpha}]/2$ is imaginary, it lies in $\kh_c^{\C}$ and in $[\kp^{\C}, \kp^{\C}] \subset \kk^{\C}$. Hence $H_{\alpha} \in i\kt$. 
Therefore, 
applying Lemma \ref{lem im mu 1} with $S = R^+_n$, $X_{\alpha} = E_{\alpha}$ and $X_{-\alpha} = \bar E_{\alpha}$ shows that the claim holds for $\kh = \kh_c$.

Now fix $\alpha \in R^+_n$. Consider the Cayley transform
\[
c_{\alpha} := \Ad\bigl(\exp(\frac{\pi}{4} (\bar E_{\alpha} - E_{\alpha})) \bigr).
\]
(For the properties of Cayley transforms we use, see for example Section VI.7 of \cite{Knapp02}.) Set $\kh_1 := c_{\alpha}(\kh_c)\cap \kg$, and 
\[
\begin{split}
X_{\alpha} &:= ic_{\alpha}(E_{\alpha});\\
X_{-\alpha} &:= -ic_{\alpha}(\bar E_{\alpha}).
\end{split}
\]
These elements lie in root spaces for $\kh_1$. They satisfy
\begin{enumerate}
\item $X_{\alpha} + X_{-\alpha} = i(E_{\alpha} - \bar E_{\alpha}) \in \kg$;
\item $X_{\alpha} - X_{-\alpha} = ic_{\alpha}(E_{\alpha} + \bar E_{\alpha}) = -iH_{\alpha} \in \kt$;
\item $[X_{\alpha}, X_{-\alpha}] = c_{\alpha}(H_{\alpha}) = E_{\alpha} + \bar E_{\alpha}\in \kh_{1}\cap \kp$.
\end{enumerate}
Hence Lemma \ref{lem im mu 3} implies that, with $\Phi$ as in the proposition for $\kh = \kh_1$,
\[
\xi + i\R H_{\alpha} \in \im(\Phi)\cap \kt.
\]
As in the first paragraph of this proof, by applying 
 Lemma \ref{lem im mu 1} with $S = R^+_n \setminus \{\alpha\}$, we find that
\[
\bigcup_{\alpha \in R^+_n \setminus \{\alpha\}} \xi + I_{\alpha}iH_{\alpha} \subset \im(\Phi)\cap \kt, 
\]
with $I_{\alpha}$ equal to $[0,\infty)$ or $(-\infty, 0]$. If $\xi + \zeta$ is regular, then $\Phi$ is a moment map in the synplectic sense, so its image intersected with $\kt$ is convex. Hence the claim follows for $\kh = \kh_1$.

Continuing in this way, removing noncompact, imaginary roots until there are none left, one proves the claim for all $\theta$-stable Cartan subalgebras.
\end{proof}

\subsection{Examples: $\SU(p,1)$, $\SO_0(p, 1)$ and $\SO_0(2,2)$} \label{sec ex mult free}

\begin{lemma}\label{lem im mu SUpq}
Let $G = \SU(p,q)$. Let $H<G$ be a $\theta$-stable Cartan subgroup, and let $\mu \in \kh$ be regular. The image of the map 
\[
\Phi\colon G/H \cong \Ad^*(G)\mu \hookrightarrow \kg \to \kk.
\]
 has nonempty interior.
\end{lemma}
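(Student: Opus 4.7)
The plan is to invoke Proposition \ref{prop dim im mu} and reduce the claim to a linear-span statement for the coroots of the noncompact imaginary roots. Since $\Phi$ is $K$-equivariant, and a regular element of $\kt$ meets each $K$-orbit transversely with $K$-orbit of dimension $\dim K - \dim T$, any nonempty interior of $\im(\Phi) \cap \kt$ in $\kt$ yields nonempty interior of $\im(\Phi) \supset \Ad(K) \cdot (\im(\Phi) \cap \kt)$ in $\kk \cong \kk^*$. So it suffices to treat the slice $\im(\Phi) \cap \kt$.

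For $G = \SU(p,q)$ the maximally compact $\theta$-stable Cartan $\kh_c$ is the subalgebra of traceless purely imaginary diagonal matrices and coincides with $\kt$. With Cartan involution $\theta(X) = I_{p,q} X I_{p,q}$, a root $\alpha_{ij}$ of $(\mathfrak{sl}(p+q,\C), \kh_c^{\C})$ is noncompact iff $i$ and $j$ lie in opposite $I_{p,q}$-blocks, so one may take $R^+_n = \{\alpha_{ij} : 1 \le i \le p < j \le p+q\}$. The real-form conjugation on $\mathfrak{sl}(p+q,\C)$ relative to $\su(p,q)$ is $X \mapsto -I_{p,q} X^* I_{p,q}$, and a direct check shows that it sends $E_{ij}$ to $E_{ji}$ for $\alpha_{ij} \in R^+_n$; hence $H_{\alpha_{ij}} = [E_{ij}, E_{ji}] = E_{ii} - E_{jj}$, and $iH_{\alpha_{ij}} \in \kt$ corresponds to the standard vector $e_i - e_j$ under the identification $\kt \cong \{(\theta_k) \in \R^{p+q} : \sum_k \theta_k = 0\}$.

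For $i, i' \le p$ the identity $e_i - e_{i'} = (e_i - e_j) - (e_{i'} - e_j)$ holds for any $j > p$, and for $j, j' > p$ the identity $e_j - e_{j'} = (e_i - e_{j'}) - (e_i - e_j)$ holds for any $i \le p$; so $\Span_\R\{iH_\alpha : \alpha \in R^+_n\}$ contains every simple difference $e_k - e_{k'}$ and therefore equals the hyperplane $\{\sum \theta_k = 0\} = \kt$. Proposition \ref{prop dim im mu} then gives that $\im(\Phi) \cap \kt$ contains the convex hull $C$ of $\bigcup_{\alpha \in R^+_n} (\xi + I_\alpha iH_\alpha)$, with each $I_\alpha \in \{\R, [0,\infty), (-\infty, 0]\}$. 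Since every $I_\alpha$ contains $0$, we have $\xi \in C$, and $C - \xi$ is a convex set whose affine hull equals $\Span_\R\{iH_\alpha\} = \kt$. As any nonempty convex set has nonempty interior in its affine hull, $C$ has nonempty interior in $\kt$, which finishes the proof. No serious obstacle is expected: everything reduces to a direct computation in the root data of $\SU(p,q)$.
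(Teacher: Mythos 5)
Your proof is correct and follows essentially the same route as the paper: both apply Proposition \ref{prop dim im mu} to the compact Cartan of diagonal elements, take $R^+_n = \{\alpha_{jk}:\, j\le p < k\}$ with root vectors $E_{jk}$, $\bar E_{jk}=E_{kj}$, compute $H_{\alpha_{jk}}=E_{jj}-E_{kk}$, and observe these span the Cartan so that the convex hull in the proposition has nonempty interior in $\kt$, hence $\im(\Phi)$ has nonempty interior in $\kk$. The only difference is that you spell out the span computation, the convexity/affine-hull step, and the $\Ad(K)$-sweeping argument, which the paper leaves implicit.
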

\begin{proof}
Let $H_c<G$ be the compact Cartan of diagonal elements. Then a choice  of positive imaginary noncompact roots of $(\kg^{\C}, \kh^{\C}_c)$ is 
\[
R^+_n = \{\alpha_{jk}; 1\leq j \leq p, p+1\leq k \leq p+q\}, 
\]
where $\alpha_{jk}$ maps the diagonal matrix with entries $(t_1, \ldots, t_{p+q})$ to $t_j - t_k$. A root vector in $\kg^{\C}_{\alpha_{jk}}$ is the matrix $E_{jk}$ win a $1$ in position $(j,k)$ and zeroes in the other positions. The complex conjugation of $E_{jk}$ with respect to the real form $\su(p,q)$ is $E_{kj}$.  And
\[
[E_{jk}, E_{kj}] = h_{jk},
\]
where $h_{jk}$ is the diagonal matrix with entry
with $1$ in the $j$th position and $-1$ in the $k$th position, and zeroes everywhere else. Together, these span $i \kh_c$. So Proposition \ref{prop dim im mu} implies that $\im(\Phi)\cap \kt$ has nonempty interior in $\kt$, so that $\im(\Phi)$ has nonempty interior in $\kk$.
\end{proof}

\begin{lemma}\label{lem im mu SO0pq}
Let $G = \SO_0(p,q)$, with $p$ and $q$ even. Let $H<G$ be a $\theta$-stable Cartan subgroup, and let $\mu \in \kh$ be regular. The image of the map 
\[
\Phi\colon G/H \cong \Ad^*(G)\mu \hookrightarrow \kg \to \kk.
\]
 has nonempty interior.
\end{lemma}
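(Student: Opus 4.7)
The plan is to apply Proposition~\ref{prop dim im mu}. The crucial feature of $G = \SO_0(p,q)$ with $p$ and $q$ both even is the equal-rank condition $\rank(G) = (p+q)/2 = \rank(K)$, where $K = \SO(p)\times\SO(q)$. This guarantees the existence of a compact $\theta$-stable Cartan subalgebra $\kh_c \subset \kk$, which is a maximal torus of $K$; after a $K$-conjugation I may take $\kh_c = \kt$. Given the $\theta$-stable Cartan $\kh$ appearing in the statement and a regular $\mu \in \kh$, decompose $\mu = \xi + \zeta$ with $\xi \in \kt\cap\kh$ and $\zeta \in \ka = \kh\cap\kp$, so that Proposition~\ref{prop dim im mu} applies and yields
\[
\bigcup_{\alpha \in R^+_n}(\xi + I_\alpha\, iH_\alpha) \subset \im(\Phi) \cap \kt,
\]
where $R^+_n$ is a positive system of imaginary noncompact roots of $(\kg^{\C},\kh_c^{\C})$ and the $H_\alpha \in i\kt$ are as defined before that proposition.

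The heart of the proof is the computation that $\Span_\R\{iH_\alpha : \alpha \in R^+_n\} = \kt$. Writing $p = 2m$ and $q = 2n$, the root system $R(\kg^{\C},\kh_c^{\C})$ is of type $D_{m+n}$, with standard roots $\pm e_i\pm e_j$ where the first $m$ coordinates correspond to the $\SO(p)$-block of $\kt$ and the last $n$ to the $\SO(q)$-block. A root is compact iff both its indices lie in $\{1,\dots,m\}$ or both in $\{m+1,\dots,m+n\}$, so the positive imaginary noncompact roots are exactly $e_i+e_j$ and $e_i-e_j$ for $1\leq i\leq m < j\leq m+n$. For each such $(i,j)$, the elements $H_{e_i+e_j}$ and $H_{e_i-e_j}$ are (up to nonzero scalars) the coroots of $e_i\pm e_j$ in $i\kt$, so their sum and difference give nonzero multiples of $H_{e_i}$ and $H_{e_j}$. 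Letting $i$ run over $\{1,\dots,m\}$ and $j$ over $\{m+1,\dots,m+n\}$ recovers all of $H_{e_1},\dots,H_{e_{m+n}}$, a basis of $i\kt$; hence the $iH_\alpha$ span $\kt$.

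Consequently, the convex hull of $\bigcup_{\alpha}(\xi + I_\alpha\, iH_\alpha)$ is a translated convex cone whose $\R$-affine span is all of $\kt$, so it has nonempty interior in $\kt$. Since $\Phi$ is $K$-equivariant, $\im(\Phi)$ contains $\Ad(K)\cdot(\im(\Phi)\cap\kt)$, and the map $K/T\times\kt^{\reg}\to\kk^{\reg}$ given by $(kT,X)\mapsto \Ad(k)X$ is an open surjection; hence any open subset of $\kt^{\reg}$ saturates to an open subset of $\kk$, and $\im(\Phi)$ has nonempty interior in $\kk$. The main obstacle is the explicit root-theoretic verification of the span property for the $H_\alpha$, and what makes it work is precisely the equal-rank hypothesis (both $p,q$ even) needed to produce a compact Cartan in the first place.
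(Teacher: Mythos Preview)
Your proof is correct and follows essentially the same approach as the paper's: both apply Proposition~\ref{prop dim im mu} to the compact Cartan $\kh_c = \kt = \mathfrak{so}(2)^{(p+q)/2}$, identify the positive noncompact imaginary roots as the ``mixed'' roots $e_i \pm e_j$ with $i$ in the $\SO(p)$-block and $j$ in the $\SO(q)$-block, and verify that the corresponding $H_\alpha$ span $i\kt$ (the paper does this by an explicit matrix computation giving $[E_{\alpha_{jk}^{\pm}}, \bar E_{\alpha_{jk}^{\pm}}] = i(h_j \pm h_k)$, while you invoke the coroot description abstractly). The passage from ``nonempty interior in $\kt$'' to ``nonempty interior in $\kk$'' is handled in the paper with a single phrase, and your $K$-saturation argument makes that step explicit.
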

\begin{proof}
Write $p=2r$, $q=2s$ and $l=r+s$. Consider the compact Cartan subgroup
 $H_c = \SO(2)^{l}<G$. For $j=1, \ldots, l$, let $h_j \in H_c$ be the matrix with a block $X = \begin{pmatrix}0 & -1 \\ 1 & 0 \end{pmatrix}$ as the $j$th $2\times 2$ block on the diagonal, and zeroes everywhere else. For $j,k=1, \ldots, l$, with $j<k$, define positive roots $\alpha_{jk}^{\pm}$ by 
 \[
 \begin{split}
 \langle \alpha_{jk}^{\pm}, h_j \rangle &= i\\
  \langle \alpha_{jk}^{\pm}, h_k \rangle &= \pm i,
 \end{split}
 \]
and $\langle \alpha_{jk}^{\pm}, h_m\rangle = 0$ for all other $m$. Then 
\[
R^+_n = \{\alpha_{jk}^{\pm}; 1 \leq j \leq r, r+1 \leq k \leq l\}
\]
 is a choice of positive, noncompact, imaginary roots. A root vector for $\alpha^{\pm}_{jk}$ is the matrix $E_{\alpha^{\pm}_{jk}}$ with a $2\times 2$ block
\[
Y_{\pm}=
\frac{1}{2}
\begin{pmatrix}
1 & \mp i \\ -i & \mp 1
\end{pmatrix}
\]
as the $2\times 2$ block in position $(j,k)$ and a block $-Y_{\pm}^T$ in position $(k,j)$, if we divide $n\times n$ matrices into $l\times l$ blocks of size $2\times 2$. 
 And
\[
[E_{\alpha^{\pm}_{jk}}, \bar E_{\alpha^{\pm}_{jk}}] = i (h_j \pm h_k).
\]
The set
\[
 \{h_j \pm h_k; 1 \leq j \leq r, r+1 \leq k \leq l\}
 \] 
 spans $\kh_c$. So Proposition \ref{prop dim im mu} implies that $\im(\Phi)\cap \kt$ has nonempty interior in $\kt$, so that $\im(\Phi)$ has nonempty interior in $\kk$.
\end{proof}

\begin{lemma}\label{lem im mu SO0p1}
Let $G = \SO_0(p,1)$. Let $H<G$ be a $\theta$-stable Cartan subgroup, and let $\mu \in \kh$ be regular. The image of the map 
\[
\Phi\colon G/H \cong \Ad^*(G)\mu \hookrightarrow \kg \to \kk.
\]
 has nonempty interior.
\end{lemma}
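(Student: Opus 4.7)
The plan is to split on the parity of $p$. If $p=2n$ is even, then $\rank K = n = \rank G$, so $G = \SO_0(2n,1)$ admits a compact Cartan $\kh_c \subset \kk = \mathfrak{so}(2n)$. Since $\kp \cong \R^{2n}$ is the standard representation of $\mathfrak{so}(2n)$, its weights under $\kh_c$ are $\pm\epsilon_1,\ldots,\pm\epsilon_n$; these are exactly the noncompact imaginary roots of $(\kg^\C,\kh_c^\C)$, and the vectors $H_{\epsilon_j} = [E_{\epsilon_j},\bar E_{\epsilon_j}]$ span $i\kh_c$. Proposition~\ref{prop dim im mu} then yields a full-dimensional convex subset of $\kt$ inside $\im(\Phi)\cap\kt$, exactly as in Lemma~\ref{lem im mu SO0pq}.

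For $p = 2n+1$ odd, however, $\rank K = n < n+1 = \rank G$, no compact Cartan exists, and the unique (up to conjugacy) $\theta$-stable Cartan $\kh = \kt_M \oplus \ka$ has $\dim\ka = 1$. All imaginary roots of $(\kg^\C,\kh^\C)$ come from $\km = \mathfrak{so}(2n)$, hence are compact, so Proposition~\ref{prop dim im mu} gives no information. The plan is to compute $\im(\Phi)$ directly from the Iwasawa decomposition. Since $\mathfrak{so}(p,1)$ has restricted root system $A_1$ with no doubled root, $\kn = \kg_\beta$ is abelian, so $\Ad(\exp Y)\mu = \mu + [Y,\mu]$ is affine in $Y\in\kn$. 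One checks that $A$ centralises $\mu = \xi + \zeta$ (as $\km$ centralises $\ka$), and that $\ad(\mu)\colon\kn\to\kn$ is invertible by regularity of $\mu$; hence $\Ad(AN)\mu = \mu + \kn$ and, writing $\mathrm{pr}_\kk\colon\kg\to\kk$ for the projection along $\kp$,
\[
\im(\Phi) = \Ad(K)(\xi + V), \qquad V := \mathrm{pr}_\kk(\kn).
\]
In the matrix model with $\ka = \R X_{e_0}$ and $\kt_M$ acting on $\Span(e_1,\ldots,e_{2n})$ by rotations in the standard $2$-planes, $V = \Span\{R_{0j} : 1 \le j \le 2n\}$, where $R_{0j}$ generates rotation in the $(e_0,e_j)$-plane; this is the sum of the short root spaces of $\kk = \mathfrak{so}(2n+1) = B_n$, of dimension $p-1$.

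It remains to show that $\Ad(K)(\xi + V)$ has nonempty interior in $\kk$. Being $K$-invariant, this set is the preimage under the open quotient $q\colon\kk\to\kk/\!/K$ of its image, so it suffices to show that the induced map $V\to\kk/\!/K\cong\kt/W$ sending $v$ to the $W$-orbit of the nonzero eigenvalues of $\xi+v$ has image with nonempty interior. Writing $\xi = \sum c_i T_i$ with distinct nonzero $c_i$ and restricting to the slice $v = \sum_{i=1}^n \alpha_{2i-1} R_{0,2i-1}$, a direct eigenvalue computation (equivalently, second-order Rayleigh--Schr\"odinger perturbation theory: the only nonzero matrix elements of $v$ between $\ad(\xi)$-eigenvectors are $V_{j^\pm,0} = \mp\alpha_{2j-1}/\sqrt 2$, and distinct $j$-blocks do not couple) gives $\mu_j^2 = c_j^2 + \alpha_{2j-1}^2 + O(\alpha^4)$ for the nonzero eigenvalues $\pm i\mu_j$ of $\xi + v$. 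Hence $(\alpha_{2i-1}) \mapsto (\mu_j^2)$ is a local diffeomorphism onto a subset of the open orthant $\{s_j > c_j^2\}\subset\R^n$, which yields the desired open subset in $\kt/W$.

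The main obstacle is the odd case: Proposition~\ref{prop dim im mu} fails for want of noncompact imaginary roots, and its role must be taken by the Iwasawa description of $\im(\Phi)$ (which relies crucially on the abelianness of $\kn$, a feature specific to rank-one symmetric spaces with no doubled restricted root) together with the perturbative eigenvalue analysis.
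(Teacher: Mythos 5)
Your even-case argument is exactly the paper's: the short roots $\pm\epsilon_j$ of $B_n$ are the noncompact imaginary roots for the compact Cartan, the elements $[E_{\epsilon_j},\bar E_{\epsilon_j}]$ span $i\kt$, and Proposition \ref{prop dim im mu} finishes. For odd $p$ you take a genuinely different route, and your diagnosis of why is accurate: the maximally compact Cartan is then $\kt\oplus\ka$, every imaginary root lies in $\km^{\C}$ and is compact, so $R^+_n=\emptyset$ and Proposition \ref{prop dim im mu} gives nothing. (The paper's printed proof treats both parities uniformly, asserting that the weight $\alpha_j$ with $\langle\alpha_j,h_j\rangle=i$ and $\alpha_j|_{\ka}=0$ is a noncompact imaginary root; but for $p$ odd $\kg^{\C}=\mathfrak{so}(p+1,\C)$ is of type $D$, which has no short roots, and the displayed $E_{\alpha_j}$ is not an $\ad(\ka)$-eigenvector, so the odd case does need either your kind of argument or a variant of Lemmas \ref{lem im mu 3}--\ref{lem im mu 1} allowing sums of root vectors with the same $\kt$-weight.) Your replacement — $\Ad(G)\mu=\Ad(K)(\mu+\kn)$ because $\kn$ is abelian and $\ad(\mu)|_{\kn}$ is invertible, hence $\im(\Phi)=\Ad(K)(\xi+V)$ with $V=\mathrm{pr}_{\kk}(\kn)$ the span of the rotations $R_{0j}$, followed by the passage to $\kt/W$ and the eigenvalue perturbation — is correct in outline and is self-contained; its cost is that it exploits real rank one and abelian $\kn$, whereas the paper's proposition-based scheme is the one that also runs for $\SU(p,q)$ and $\SO_0(p,q)$.

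Two points in the odd case need tightening. First, you may not assume the $c_i$ are all nonzero: regularity of $\mu=\xi+\zeta$ only forces $c_i\neq\pm c_j$ for $i\neq j$, and when $\zeta\neq 0$ one $c_i$ may vanish (the roots coupling $\kt$ to $\ka$ are still nonzero on $\mu$). The asymptotics $\mu_j^2=c_j^2+\alpha_{2j-1}^2+O(\alpha^4)$ survive — for $c_j=0$ the relevant $3\times 3$ block is an exact $\mathfrak{so}(3)$ computation giving $\mu_j^2=\alpha_{2j-1}^2$ to leading order — but your nondegenerate Rayleigh--Schr\"odinger justification does not literally apply to that block and should be replaced by a degenerate or direct computation. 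Second, the map $(\alpha_{2i-1})\mapsto(\mu_j^2)$ has vanishing Jacobian at $\alpha=0$, so it is not a local diffeomorphism there; evaluate instead at a nearby point with all $\alpha_{2j-1}\neq 0$, where the Jacobian is $\diag(2\alpha_{2j-1})+O(\alpha^3)$ and hence invertible. This still produces a nonempty open subset of $\kt/W$, and then of $\kk$ by openness of the quotient and $K$-invariance, so with these repairs your proof is complete.
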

\begin{proof}
Write $p=2l$ or $p=2l+1$ depending on the parity of $p$. Set 
\[
A := \Bigl\{\begin{pmatrix}
\cosh(t) & \sinh(t) \\ \sinh(t) & \cosh(t)
\end{pmatrix}; t \in \R \Bigr\}
\]
Consider the maximal torus $T = \SO(2)^l$ of $K = \SO(p)$. A maximally compact Cartan subgroup of $G$ is  $H_c = T$ if $p$ is even, and $H_c = T\times A$ if $p$ is odd.

For $j=1,\ldots, l$, let $h_j$ be the matrix whose $j$th $2\times 2$ block on the diagonal is $ \begin{pmatrix}0 & -1 \\ 1 & 0 \end{pmatrix}$, and with all other entries zero.
Consider the root $\alpha_j$ of $(\kg^{\C}, \kh_c^{\C})$ given by $\langle \alpha_j, h_j \rangle = i$, $\langle \alpha_j, h_k \rangle = 0$ if $k\not=j$, and, if $p$ is odd, $\alpha_j|_{\ka}=0$. A root vector for $\alpha_j$ is the matrix
\[
E_{\alpha_j} = \begin{pmatrix}
0 & \cdots & & & \cdots & 0 \\
\vdots & & & & & \vdots \\
 & & & & & -i \\
 & & & & & 1 \\
\vdots  & & & & & \vdots \\
0 & \cdots & i & -1 & \cdots & 0
\end{pmatrix},
\]
where the two  nonzero entries in the last column are in rows $2j-1$ and $2j$, and the two nonzero entries in the bottom row are in columns $2j-1$ and $2j$. So $\alpha_j$ is an imaginary, noncompact root. The matrices $[E_{\alpha_j}, \bar E_{\alpha_j}] = -2i h_j$, where $j = 1,\ldots, l$, span $i\kt$.  Proposition \ref{prop dim im mu} implies that $\im(\Phi)\cap \kt$ has nonempty interior in $\kt$, so that $\im(\Phi)$ has nonempty interior in $\kk$.
\end{proof}

Combining Corollary \ref{cor mult free} and Lemmas \ref{lem im mu SUpq}--\ref{lem im mu SO0p1} with the list of groups  in Subsection \ref{sec dim count}, we obtain the following consequence of Theorem \ref{thm mult form}.
\begin{corollary}\label{cor mult free ex}
If $G = \SU(p,1)$, $G = \SO_0(p,1)$ or $G = \SO_0(2,2)$, then  any admissible representation of $G$ has multiplicity-free restriction to a maximal compact subgroup.
\end{corollary}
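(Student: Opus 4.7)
The plan is to assemble Corollary \ref{cor mult free ex} by invoking Corollary \ref{cor mult free}, which reduces the problem to two independent checks for each of the groups $\SU(p,1)$, $\SO_0(p,1)$, and $\SO_0(2,2)$: (i) the dimension inequality
\[
\dim(G)\leq \rank(G)+\dim(T)+\dim(K),
\]
and (ii) that $\im(\Phi)$ has nonempty interior in $\kk^*$ for the moment map $\Phi\colon G/H \to \kk^*$ associated to a regular $\xi+\zeta$. The first step is to reduce from arbitrary admissible $\pi$ to the standard-representation setting where $\pi^M_{\lambda,\chi_M}$ lies in the discrete series of $M$: by the Langlands classification, any admissible $\pi$ is a subquotient of some such induced representation $\tilde\pi$, and multiplicity-freeness is preserved under taking subquotients. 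In particular, once $\tilde\pi|_K$ is shown to be multiplicity-free, the same holds for $\pi|_K$.

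For the dimension inequality, I would simply read it off the list at the end of Subsection \ref{sec dim count}, which already enumerates exactly these groups among the classical semisimple groups satisfying $\dim(G)\leq \rank(G)+\dim(T)+\dim(K)$. Concretely: for $\SU(p,1)$ one has $\dim(G)=p^2+2p$ while $\rank(G)+\dim(T)+\dim(K)=p+p+p^2$; for $\SO_0(2,2)$ one gets $6=2+2+2$; and for $\SO_0(p,1)$ the inequality holds in all parities of $p$ (as already recorded). So (i) requires no new work.

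For (ii), the three groups are covered one by one: $\SU(p,1)$ is the case $q=1$ of Lemma \ref{lem im mu SUpq}; $\SO_0(p,1)$ is handled directly by Lemma \ref{lem im mu SO0p1}, which accommodates both parities of $p$ via either the compact Cartan $\SO(2)^l$ or $\SO(2)^l\times A$; and $\SO_0(2,2)$ is the case $p=q=2$ of Lemma \ref{lem im mu SO0pq}, since both $p$ and $q$ are even. In each case the lemma furnishes enough imaginary noncompact roots and corresponding elements $H_\alpha\in i\kt$ to conclude, via Proposition \ref{prop dim im mu}, that the convex set $\im(\Phi)\cap\kt$ has full dimension in $\kt$. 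Combined with the free $K$-action in the direction of $\kk/\kt$, this gives nonempty interior of $\im(\Phi)$ in $\kk^*$.

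With (i) and (ii) in hand, Corollary \ref{cor mult free} applies to the standard representation $\tilde\pi$, yielding $[\tilde\pi|_K:\delta]\in\{0,1\}$ for every $\delta\in\hat K$, and the Langlands reduction above then transfers multiplicity-freeness to $\pi|_K$. The main obstacle here is conceptual rather than computational: one must trust that the moment-map nonempty-interior condition (which is in principle delicate for noncompact coadjoint orbits) is genuinely controlled by Proposition \ref{prop dim im mu}; but since that proposition is already established, no further analytical input is needed and the proof collapses to the bookkeeping above.
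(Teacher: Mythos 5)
Your proposal is correct and follows essentially the same route as the paper: the paper also obtains Corollary \ref{cor mult free ex} by combining Corollary \ref{cor mult free} (which already contains the Langlands reduction to the standard representation $\tilde\pi$) with the dimension inequality recorded in Subsection \ref{sec dim count} and the nonempty-interior statements of Lemmas \ref{lem im mu SUpq}, \ref{lem im mu SO0pq} (with $p=q=2$) and \ref{lem im mu SO0p1}. Your extra bookkeeping (explicit dimension counts and the subquotient remark) just unpacks steps the paper leaves implicit, so there is nothing to add.
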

Koornwinder \cite{Koornwinder82} proved the cases $G = \SU(p,1)$ and $G = \SO_0(p,1)$. We give a geometric explanation of this fact here, include the case $G = \SO_0(2,2)$, and also a geometric criterion for when multiplicites equal one (see Corollary \ref{cor one 2}). Using Proposition \ref{prop dim im mu}, one can investigate the groups listed at the start of this section in a similar way. 

Note that $\SU(p,q)$ is of Hermitian type (meaning that $G/K$ is a Hermitian symmetric space), but $\SO_0(p,1)$ and $\SO_0(2,2)$ are not. Therefore, Corollary \ref{cor mult free ex} illustrates the fact that our method applies beyond the Hermitian case considered for example in 
 \cite{Kobayashi97}. Furthermore, $\SO_0(p,1)$ has no discrete series for $p$ odd, so that we find that the method yields nontrivial results for such groups as well.

\bibliographystyle{plain}
\bibliography{mybib}

\end{document}